\documentclass[a4paper,10pt]{amsart}
\usepackage{palatino, mathpazo}
\usepackage[plainpages=false]{hyperref}
\usepackage{amsfonts,latexsym,rawfonts,amsmath,amssymb,amsthm, mathrsfs, lscape}
\usepackage{verbatim}
\usepackage{stmaryrd}

\usepackage{palatino, mathpazo}
\usepackage[plainpages=false]{hyperref}
\usepackage{amsfonts,latexsym,rawfonts,amsmath,amssymb,amsthm, mathrsfs, lscape}
\usepackage[all]{xy}
\usepackage{graphicx,psfrag}

\usepackage{array, tabularx}

\usepackage{setspace}

\usepackage{epsfig}
\usepackage{graphicx}
\usepackage{eufrak}
\usepackage{dsfont}
\usepackage[english]{babel}
\usepackage{color}

\usepackage{palatino, mathpazo}
\usepackage{amscd}      
\usepackage{amssymb}
\usepackage{dsfont}
\usepackage{xypic}      
\LaTeXdiagrams          
\newbox\mybox
\def\overtag#1#2#3{\setbox\mybox\hbox{$#1$}\hbox to
  0pt{\vbox to 0pt{\vglue-#3\vglue-\ht\mybox\hbox to \wd\mybox
      {\hss$\ss#2$\hss}\vss}\hss}\box\mybox}
\def\undertag#1#2#3{\setbox\mybox\hbox{$#1$}\hbox to 0pt{\vbox to
    0pt{\vglue#3\vglue\ht\mybox\hbox to \wd\mybox
      {\hss$\ss#2$\hss}\vss}\hss}\box\mybox}
\def\lefttag#1#2#3{\hbox to 0pt{\vbox to 0pt{\vss\hbox to
      0pt{\hss$\ss#2$\hskip#3}\vss}}#1}
\def\righttag#1#2#3{\hbox to 0pt{\vbox to 0pt{\vss\hbox to
      0pt{\hskip#3$\ss#2$\hss}\vss}}#1}
\let\ss\scriptstyle

\def\Dot{\lower.2pc\hbox to 2pt{\hss$\bullet$\hss}}
\def\Circ{\lower.2pc\hbox to 2pt{\hss$\circ$\hss}}
\def\Vdots{\raise5pt\hbox{$\vdots$}}
\newcommand\lineto{\ar@{-}}
\newcommand\dashto{\ar@{--}}
\newcommand\dotto{\ar@{.}}

\allowdisplaybreaks[3]

\newtheorem{prop}{Proposition}[section]

\newtheorem{thm}[prop]{Theorem}
\newtheorem{rmk}[prop]{Remark}

\newtheorem{defn}[prop]{Definition}

\newtheorem{con}[prop]{Conjecture}
\newtheorem{condition}[prop]{Condition}

            {\nolinebreak $\Box$ \end{trivlist}}

\newcommand{\noprint}[1]{}

\renewcommand{\tilde}{\widetilde}

\newcommand{\Ext}{\mbox{Ext}}

\newcommand{\Hom}{\mbox{Hom}}

\newcommand{\tw}{\mbox{\tiny tw}}

\newcommand{\XX}{{\mathfrak X}}

\newcommand{\DD}{{\mathfrak D}}

\newcommand{\WW}{{\mathfrak W}}

\newcommand{\zz}{{\mathbb Z}}
\newcommand{\hh}{{\mathbb H}}

\newcommand{\aaa}{{\mathbb A}}

\renewcommand{\ll}{{\mathbb L}}
\newcommand{\qq}{{\mathbb Q}}
\newcommand{\pp}{{\mathbb P}}
\newcommand{\cc}{{\mathbb C}}

\newcommand{\Gm}{{{\mathbb G}_{\mbox{\tiny\rm m}}}}

\newcommand{\sT}{{\mathcal T}}
\newcommand{\sD}{{\mathcal D}}
\newcommand{\sC}{{\mathcal C}}
\newcommand{\sE}{{\mathcal E}}

\newcommand{\sL}{{\mathcal L}}

\newcommand{\sS}{{\mathcal S}}

\newcommand{\sO}{{\mathcal O}}

\newcommand{\sX}{{\mathcal X}}

\newcommand{\sM}{{\mathcal M}}

\newcommand{\sZ}{{\mathcal Z}}

\DeclareMathOperator{\Sch}{Sch}

\DeclareMathOperator{\Aut}{Aut}

\DeclareMathOperator{\ind}{ind}

\DeclareMathOperator{\vir}{vir}

\DeclareMathOperator{\Pic}{Pic}
\DeclareMathOperator{\vd}{vd}

\DeclareMathOperator{\CM}{CM}

\DeclareMathOperator{\lci}{lci}

\DeclareMathOperator{\Bub}{Bub}

\DeclareMathOperator{\KSBA}{KSBA}

\newcommand{\ev}{\mathop{\rm ev}\nolimits}

\newcommand{\ob}{\mathop{\rm ob}}

\newcommand{\spec}{\mathop{\rm Spec}\nolimits}

\numberwithin{equation}{subsection}

\newcommand {\mat}      [1] {\left(\begin{array}{#1}}
\newcommand {\rix}          {\end{array}\right)}

\begin{document}

\title{Enumerative Geometry on KSBA moduli spaces}

\author{Yunfeng Jiang}
\address{Department of Mathematics\\ University of Kansas\\ 405 Snow Hall 1460 Jayhawk Blvd\\Lawrence KS 66045 USA} 
\email{y.jiang@ku.edu}
\date{\today}
\maketitle

\begin{abstract}
We survey two new compactification methods for the KSBA moduli space of general type surfaces so that both of them admit a perfect obstruction theory.  Virtual fundamental classes exist on these two moduli spaces, and tautological invariants can be defined on KSBA moduli spaces.  This is the starting point to do enumerative geometry on KSBA moduli spaces, and we include some discussions in this direction. 
\end{abstract}

\section{Introduction}
We work over $\cc$. 
The main purpose of this survey  paper is to review the recent two compactifications of the KSBA moduli space of surfaces of general type.  These moduli spaces admit proper morphisms to the KSBA moduli space and carry a perfect obstruction theory. A virtual fundamental class can be defined on the KSBA moduli space and tautological invariants can be defined. 

\subsection{The two moduli stacks}
The KSBA moduli stack of Koll\'ar-Shepherd-Barron \cite{Kollar-Shepherd-Barron} is the higher dimension analogue of the Deligne-Mumford moduli space $\overline{M}_{g}$ of stable curves.  It is represented by the moduli functor $M_N:=\overline{M}_{K^2, \chi, N}$  which parametrizes 
$\qq$-Gorenstein deformation families $\sX/T$ of stable semi-log-canonical (slc) surfaces $X$ 
such that $\omega_{\sX/T}^{[N]}=(\omega_{\sX/T}^{\otimes N})^{\vee\vee}$ is invertible, and  $K^2:=K_X^2, \chi:=\chi(\sO_X),  N\in \zz_{>0}$, see \cite{Kollar-Shepherd-Barron}.  

Unlike the moduli space $\overline{M}_g$, where the singularities on the boundary stable curves are nodal  singularities which are locally complete intersection (lci), the boundary surfaces in $M_N$ have semi-log-canonical (slc) singularities.   SLC singularities are the most mild  singularities for the surfaces in $M_N$. 

SLC surface singularities are not lci in general.  Any slc surface $X$ can be lifted to  an index one covering DM stack $\XX\to X$ so that the only non-lci singularities of $\XX$ are slc (non log terminal) surface singularities.  More precisely, the slc and non log terminal surface singularities are simple elliptic, cusp and degenerate cusp singularities and their cyclic quotients.  When their embedded dimension $\ge 5$, they are not lci.  If an slc surface $X$ is lci, then the $\qq$-Gorenstein deformation of $X$ does not have higher obstruction spaces, i.e., the deformation of $X$ only has deformation space $\Ext^1(\Omega_{X}, \sO_X)$ and obstruction space $\Ext^2(\Omega_{X}, \sO_X)$.  But a $\qq$-Gorenstein deformation of non-lci slc singularities definitely has higher obstruction spaces, see \cite{Jiang_2021}.

\subsection*{LCI cover method} Our first method to solve this problem is to define the lci covers of these slc (non log-terminal) surface singularities.  For an slc surface $X$, there are finite  isolated  slc (non log terminal) singularities of $X$.   We prove that there is an lci covering DM stack $\XX^{\lci}\to X$ such that the DM stack $\XX^{\lci}$ has only lci singularities.  The lci covers are taken from the fundamental group of the link of the isolated singularity. 
We prove that the lci covering DM stacks can be put into flat families.  We first have 
\begin{thm}\label{thm_lci_family} (\cite[Theorem 1.4]{Jiang_2022}
    Let $\overline{f}: \sX\to T$ be a flat smoothing family of stable slc surfaces over a scheme $T$ and $f: \XX\to T$ be the associated index one covering DM stack.  Suppose that the non-lci singularity germs $(\XX_0,x)$  of the central fiber are not simple elliptic singularities of degree $5$, $6$ or $7$ or cusp singularities that can not be lifted to equivariant smoothing of an $\lci$ cusp, then $f: \XX\to T$ can be lifted to a flat family $f^{\lci}: \XX^{\lci}\to T$ of lci covering Deligne-Mumford stacks.  Moreover, $\sX$ is  the coarse moduli space of both $\XX$ and $\XX^{\lci}$.
\end{thm}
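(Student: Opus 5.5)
The plan is to work locally around the finitely many non-lci points and then glue, using the fact that outside these points $\XX$ already has lci singularities. First I will isolate the problem. The central fiber $\XX_0$ has finitely many non-lci points $x_1,\dots,x_k$. Each is a simple elliptic, cusp or degenerate cusp singularity (or a cyclic quotient of one) of embedding dimension $\ge 5$. Away from these points $f$ is already a flat family of lci DM stacks. Since the non-lci locus of a flat family is closed, after shrinking $T$ around $0$ (or working étale locally on $T$) it is contained in small analytic/étale neighbourhoods $U_i\to T$ of the $x_i$. It therefore suffices to do two things: build, for each $i$, a flat family of lci covering stacks $\mathcal{U}_i^{\lci}\to U_i$ over $T$, and show these agree with the identity cover on the overlaps $U_i\setminus\{\text{non-lci locus}\}$. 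Gluing in the étale topology then produces $\XX^{\lci}$.

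Second, the local construction over the central fiber. For each germ $(\XX_0,x)$ I take the link $L$, a $T^2$-bundle or torus bundle over $S^1$ in the elliptic and cusp cases. I choose a finite quotient $G$ of $\pi_1(L)$ such that the corresponding finite cover $(Y,y)\to(\XX_0,x)$, unramified off $x$, is an lci singularity. In the simple elliptic case of degree $d$, the cover is given by multiplication by $n$ on the elliptic curve, which lowers the degree to $d/n^2\cdot(\ldots)$; concretely I pick a cover of degree $\le 4$, which is a hypersurface or complete intersection. In the cusp case I use Neumann's result that cusps have lci (indeed complete intersection) universal abelian covers when they are of the appropriate type. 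The local lci cover is then the quotient stack $[Y/G]$, whose coarse space is $(\XX_0,x)$, since $G$ acts freely off $y$.

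Third, and this is the main obstacle: lifting to the smoothing. The given family $f$ restricted to $U_i$ is a deformation of $(\XX_0,x)$. I need a $G$-equivariant deformation $\mathcal{Y}\to T$ of $Y$ whose quotient $\mathcal{Y}/G$ recovers $U_i$. I would argue as follows. Since $T$ smooths the singularity, the general fiber near $x$ is smooth, so its local fundamental group near the Milnor fiber restricts the topology. The cover $Y\to\XX_0$ is unramified off $x$, and unramified covers extend uniquely over the total space minus the singular point (purity / Zariski–Nagata over the $S_2$ total space). Taking the $S_2$-closure (the reflexive hull of the pushforward of the structure sheaf) gives $\mathcal{Y}$ with a $G$-action and $\mathcal{Y}/G=U_i$. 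The hard part is to prove that $\mathcal{Y}\to T$ is flat with central fiber exactly $Y$, i.e. that the cover extends as a deformation. This is precisely where the hypotheses enter. For simple elliptic singularities of degree $5,6,7$ there is no suitable lci cover compatible with smoothings: the Milnor fibre does not admit the required covering, so the equivariant smoothing fails. For cusps one must assume that the smoothing lifts to an equivariant smoothing of the lci cusp (Looijenga's criterion relating smoothings of cusps to dual cusps). I would check flatness by comparing the Hilbert functions, or equivalently $\chi$ of the fibers, of $\mathcal{Y}_0$ and $Y$ via the $G$-isotypic decomposition of $\sO_{\mathcal{Y}}$. Each isotypic piece is a reflexive rank-one sheaf on $U_i$, and its flatness is equivalent to its being $\qq$-Gorenstein-compatible, which follows from the index-one cover condition. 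Lci-ness of the fibers follows because the central fiber is lci and lci is an open condition in flat families.

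Finally, I glue the $[\mathcal{Y}_i/G_i]$ with $\XX$ minus the non-lci locus to obtain $f^{\lci}:\XX^{\lci}\to T$. On overlaps the covers are isomorphisms, since $G_i$ acts freely there. The coarse moduli space is computed locally: it is $\mathcal{Y}_i/G_i=U_i$, and elsewhere it equals that of $\XX$. Hence both $\XX$ and $\XX^{\lci}$ have coarse space $\sX$.
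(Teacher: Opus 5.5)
Your outline (local covers from the link, an equivariant lift of the smoothing, gluing) has the same shape as the paper's argument. However, the step that carries all the content rests on a false claim. In your third step you assert that the \'etale cover $Y\setminus\{y\}\to \XX_0\setminus\{x\}$ ``extends uniquely over the total space minus the singular point'' by Zariski--Nagata purity. Purity extends covers across closed subsets of codimension $\ge 2$ in a regular scheme. It says nothing about extending a cover from a Cartier divisor (the central fibre) to the ambient total space $U$.

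The correct local Lefschetz picture is as follows. The link of $(U,x)$ is an open book whose binding is the link $\Sigma$ of $(\XX_0,x)$ and whose page is the Milnor fibre $F$. So $\pi_1(\Sigma)\to \pi_1^{\mathrm{loc}}(U\setminus x)\cong \pi_1(F)$ is surjective but in general far from injective. A cover of $\XX_0\setminus x$ extends if and only if its monodromy $\pi_1(\Sigma)\to G$ factors through $\pi_1(F)$.

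For simple elliptic germs of degree $d=5,6,7$, $F$ is the complement of a smooth anticanonical curve in a del Pezzo surface of degree $d$, which is simply connected. Equivalently, as the paper notes, the $5$-dimensional link of the total space is simply connected, so no nontrivial cover extends. Your argument would produce a lift anyway, so it proves too much; it also contradicts your own later remark that ``the Milnor fibre does not admit the required covering.''

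The obstruction is therefore the \emph{existence} of the extension, not flatness. Moreover, $G$ must be read off from $\pi_1(F)$, so it depends on the smoothing and cannot be fixed from the central fibre alone. For example, for $d=9$ one has $\pi_1(F)=\pi_1(\pp^2\setminus E)=\zz_3$. Only the $\zz_3$-cover by the cubic cone $\{g=0\}\subset\cc^3$ (with equivariant smoothing $\{g=t\}$) lifts; the $\zz_9$-cover by a degree one singularity does not. Separately, multiplication by $n$ on $E$ pulls a degree $d$ bundle back to degree $n^2d$, so it raises the degree. The degree-lowering covers are the fibrewise cyclic covers attached to the torsion $\zz_d\subset H_1(\Sigma,\zz)$.

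The paper fills exactly this step case by case rather than by a general principle:
\begin{itemize}
\item Theorem~\ref{thm_elliptic_singularity_lci_lifting} for simple elliptic germs; this is where $d\neq 5,6,7$ enters.
\item Theorem~\ref{thm_smoothing_lci_cusp_paper}, together with the hypothesis, for cusps.
\item Neumann--Wahl universal abelian covers of the rational germs $Z/\mu_r$ for the quotient charts $[Z/\mu_r]$ of $\XX$. This is what makes the lci cover Galois over the coarse germ and map to $[Z/\mu_r]$, a compatibility your construction ignores.
\item Theorem~\ref{thm_minimally_elliptic_universal} for degenerate cusps, which you do not treat.
\end{itemize}
The resulting one-parameter families are glued, and families over a general $T$ are obtained by base change as in Definition~\ref{defn_family_lci}.

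Your flatness argument is also misdirected. The germs of $\XX_0$ are already Gorenstein, so the isotypic summands of $\sO_{\mathcal{Y}}$ are torsion divisorial sheaves, not powers of $\omega$, and the index-one condition says nothing about them. Once an extension exists over a one-parameter base, flatness and $\mathcal{Y}_0=Y$ follow instead from $\mathcal{Y}$ being Cohen--Macaulay. Indeed, $\mathcal{Y}$ is a cover, \'etale in codimension one, of the canonical Gorenstein total space, hence itself canonical. Finally, the $S_2$-closure construction does not by itself handle an arbitrary base $T$.
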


Then we define the following moduli functor 
$$
\sM_N^{\lci}:=\overline{\sM}^{\lci}_{K^2, \chi, N}: \Sch_{\mathbf{k}}\to \text{Groupoids}
$$
which sends 
$$
T\mapsto \{\XX^{\lci}\to T\}
$$
where $\{\XX^{\lci}\to T\}$ is  the isomorphic class of  flat families  of stable $\lci$ covering DM stacks. 

\begin{thm}\label{thm_lci_cover_DM_intro}(\cite[Theorem 1.6]{Jiang_2022}, \cite[Corollary 5.40]{Jiang_2022})
The moduli functor  $\sM_N^{\lci}$ is represented by a DM stack $M_N^{\lci}:=\overline{M}^{\lci}_{K^2,\chi,N}$, and there exists a proper 
morphism between DM stacks 
$$f^{\lci}: M_N^{\lci}\to M_N.$$  
\end{thm}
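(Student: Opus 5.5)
The plan is to realize $M_N^{\lci}$ as a stack over $M_N$ and verify the Artin/DM axioms relative to $M_N$. The KSBA stack $M_N$ is a proper DM stack, and it carries the universal family of index one covering DM stacks $\XX\to M_N$. By Theorem~\ref{thm_lci_family}, away from the excluded germs, every such family lifts (locally on the base) to a flat family $\XX^{\lci}\to T$ with the same coarse space $\sX$. So the first step is to show that $\sM_N^{\lci}$ is a stack in the étale topology, with a forgetful morphism $\sM_N^{\lci}\to \sM_N$ given by $\XX^{\lci}\mapsto \sX$, the coarse moduli space. Descent for families of DM stacks with fixed coarse space is standard, and the fibers of $\XX^{\lci}\to\sX$ over the finitely many non-lci points are governed by the finite groups coming from the local fundamental group of the link.

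The second step is local and is where the lci cover construction enters. I will show that for a fixed slc germ $(X,x)$, the set of lci covers $\XX^{\lci}$ over its index one cover is finite. These covers correspond to finite-index normal subgroups $H\subset \pi_1(L)$ of the link fundamental group (or of its image in the local $\pi_1$) such that the quotient cover is lci. I also need each such cover to have only finitely many automorphisms over $X$, namely the deck group $\pi_1(L)/H$ together with automorphisms of $X$. Globally, $X$ has finitely many non-lci points and $\Aut(X)$ is finite since $X$ is stable. Hence $\Aut(\XX^{\lci})$ is finite and reduced in characteristic $0$, so the diagonal of $M_N^{\lci}$ is unramified. The deformation theory of $\XX^{\lci}$ is that of an lci DM stack: tangent space $\Ext^1(L_{\XX^{\lci}},\sO)$ and obstructions in $\Ext^2$. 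Together with Schlessinger/Artin criteria, this gives an algebraic stack. Alternatively, I can construct it directly as a relative stack of lifts over $M_N$: the lifting data are a locally constant choice of subgroup $H$ and a $\mu$-equivariant gerbe-type structure, which is representable, finite and unramified over $M_N$.

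The third step is properness of $f^{\lci}$. Quasi-finiteness follows from the finiteness in step two. For the valuative criterion, I take a DVR $R$ with fraction field $K$, a family $\sX_R\in M_N(R)$, and a lift $\XX^{\lci}_K$. I need to extend this lift, possibly after a finite base change, to $\XX^{\lci}_R$. Existence of the extension is exactly Theorem~\ref{thm_lci_family} applied to the smoothing family $\sX_R\to \spec R$, after matching the chosen subgroup $H$ at the special fiber with the one on the generic fiber via specialization of link fundamental groups. Uniqueness comes from separatedness: two lifts with the same coarse space agreeing generically are isomorphic, because the cover is determined by the normalization of $\sX_R$ in the function field of the generic cover (purity over the codimension $\geq 2$ locus).

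The main obstacle I expect is this extension step in the valuative criterion, and more precisely its interaction with the excluded singularities. Theorem~\ref{thm_lci_family} requires the central fiber to avoid simple elliptic germs of degree $5,6,7$ and the non-liftable cusps. So I must either restrict $M_N$ to the open and closed locus where these do not occur, or argue that the normalization construction still yields a flat lci family there. I would first try the normalization approach: take the normalization of $\XX_R$ in the cover determined by $H$, and check flatness and the lci property fiberwise using the explicit equivariant smoothings of the lci cusps and elliptic cones. I would fall back on the restriction, as in \cite[Corollary 5.40]{Jiang_2022}, if flatness fails.
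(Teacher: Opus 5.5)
\textbf{Verdict: there is a genuine gap.} Your existence argument for the good germs matches the paper (Theorem~\ref{thm_lci_family}), but both your uniqueness argument and your treatment of the excluded germs fail.

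\textbf{Uniqueness in the valuative criterion.} Take a smoothing $\sX_R\to\spec R$ of a non-lci germ $(\sX_0,x)$. The general fibre is smooth near $x$, so the covering group $G$ acts freely there. Hence $\XX^{\lci}_K$ coincides with $\XX_K$ near $x$ and records nothing about the cover. An extension over $R$ is a finite cover of the punctured threefold germ $(\sX_R,x)\setminus\{x\}$. It is governed by the fundamental group of the five-dimensional link of the total space: the homomorphism $\pi_1(\Sigma)\to G$ of the surface link must factor through it. So there is no generic cover whose function field you could normalize in, and no ``specialization of link groups'' against which to match $H$. Distinct extensions can therefore agree over $K$, and for fake lci covers they do: different crepant resolutions of one smoothing differ by flops over the special fibre. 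This is why the paper imposes $\mathbb{S}$-equivalence and builds the lifting into the functor itself (Definition~\ref{defn_family_lci}, with a proper base change $\eta\colon T\to T'$ and the one-parameter condition (4)). It does not deduce separatedness from purity.

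\textbf{The excluded germs.} This is the missing idea. Simple elliptic germs of degree $5,6,7$ and the non-liftable cusps admit no equivariant lci lift. For the former, the paper notes that the total space of a smoothing has a canonical singularity with simply connected link. So there is no cover \'etale off $x$ at all, and normalizing ``in the cover determined by $H$'' gives nothing new, regardless of flatness. Restricting $M_N$ does not help either. These germs are smoothable, so surfaces carrying them are limits of surfaces without them. The locus avoiding them is therefore not closed in general, and the restricted stack is not proper over $M_N$. Restricting to whole connected components that avoid them proves a much weaker statement.

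The paper instead uses fake lci covers, obtained from a crepant resolution of the one-parameter smoothing, together with $\mathbb{S}$-equivalence. Theorem~\ref{thm_one_parameter_family_lci} then lifts every one-parameter family after finite base change, and this is what gives properness of $f^{\lci}$.

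For the same reasons, your description of $M^{\lci}_N\to M_N$ as a relative stack of locally constant choices of $H$, finite and unramified, is unjustified. The lifting data sits at special points and depends on the smoothing direction.
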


Not every smoothing of slc and non-log terminal singularity can be lifted to smoothing of lci covers.  The degrees $5, 6,7$ simple elliptic singularities and some cusps are very special, since the canonical singularity of their one-parameter smoothing, whose link is a $5$-dimensional Sasakian manifold and simply connected, does not admits an lci cover.  Thus, in Theorem \ref{thm_lci_cover_DM_intro}, our moduli stack $M^{\lci}_N$ can not deal with such singularities.  In \cite{Jiang_2022}, we take "crepant resolution" for the one-parameter smoothing of these singularities, and use it to define fake lci covering DM stacks.  Therefore,  the moduli stack $M_N^{\lci}$ of lci covers cover all the KSBA moduli spaces, see \cite[Theorem 1.6]{Jiang_2022}.  
When $N$ is large divisible enough,  the stack $M^{\lci}:=M_N^{\lci}$ is a proper 
DM stack and the morphism 
$f^{\lci}: M^{\lci}\to M$ is a proper  morphism which induces a proper  morphism  on their projective coarse moduli spaces.
Under fake lci covers, we get a proper moduli stack $M^{\lci}$ within algebraic geometry. 

The  idea of fake lci covering DM stacks motivates another method for the new compactification of KSBA moduli spaces. This is the bubble surface construction. 
 
\subsection*{Bubbling surface method} The motivation of bubble surfaces arises from bubbling  phenomenon of the rescaling of  K\"ahler-Einstein metrics on general-type surfaces, and these metrics collapse at log-canonical (non-log-terminal) singularities.
Similar bubbling phenomena have been used for other moduli spaces (see \cite{Feehan}, \cite{MTT}, \cite{Taubes}, \cite{Uhlenbeck}).

Let $f: \sX\to \Delta$ be a flat one-parameter smoothing family of slc surfaces. Suppose that the central fiber $f^{-1}(0)=\sX_0$ contains simple elliptic, cusp or degenerate cusp singularities $p_1, \cdots, p_n$ with local embedded dimension $\ge 6$ (since deformation of these singularities has higher obstruction spaces \cite{Jiang_2021}). 

\begin{thm}\label{thm_smoothing_bubble_intro}
(\cite[Theorem 1.1]{Jiang_Bubble})
There exists a bubble-tree surface $\sX_0^{\Bub}$ constructed from $\sX_0$ such that the simple elliptic, cusp and degenerate cusp singularities are replaced by bubble-tree simple normal crossing rational surfaces. 
Let $\XX_0^{\Bub}\to \sX_0^{\Bub}$ be the index one cover DM stack, and  
suppose that $\XX_0$ admits a smoothing.  Then the index one cover bubble tree DM stack $\XX_0^{\Bub}$ admits a smoothing $f: \mathscr{X}^{\Bub}\to \Delta$
    such that $f^{-1}(0)=\mathscr{X}_0^{\Bub}=\XX_0^{\Bub}$.

    Moreover, the smoothing $f: \mathscr{X}^{\Bub}\to \Delta$ induces smoothing of the corresponding simple elliptic, cusp or degenerate cusp singularities.
\end{thm}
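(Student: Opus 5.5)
The plan is to argue locally at each non-lci singular point $p_i$ of $\sX_0$ and then glue, in the manner of a semistable reduction / simultaneous resolution argument.

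\textbf{Step 1: the bubble tree at one point.} Fix $p=p_i$, a simple elliptic, cusp or degenerate cusp singularity (or a cyclic quotient of one, handled equivariantly on the index one cover $\XX_0$). The one-parameter smoothing $f:\sX\to\Delta$ has total space germ $(\sX,p)$. Its index one cover is a Gorenstein threefold germ, and a strictly log canonical central fiber forces it to be canonical. I would take a partial (crepant or minimal-discrepancy) resolution $\pi:\tilde{\sX}\to\sX$ supported over $p$. After a finite base change $t\mapsto t^k$ and normalization, the central fiber of $\tilde{\sX}$ becomes reduced simple normal crossing; this is Mumford's semistable reduction. It consists of the strict transform of $\sX_0$, whose exceptional curve over $p$ is the elliptic curve or the cycle of rational curves, together with exceptional components over $p$. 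These components are rational surfaces: the discrepancy of $p$ is $-1$ and the total space is canonical, so each component is uniruled with $q=0$. They meet in a tree pattern, which is the bubble tree. I would then contract the superfluous components, using MMP relative over $\Delta$ and keeping $K$ relatively nef away from the bubbles, so that the result is canonical. This defines $\sX_0^{\Bub}$ locally: $\sX_0$ with $p$ replaced by the tree of snc rational surfaces, glued along the exceptional elliptic curve or cycle. The point to check is that the construction depends only on the germ $(\sX_0,p)$ together with the smoothing direction, so that it really is "constructed from $\sX_0$".

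\textbf{Step 2: globalize and pass to the index one cover.} The singularities $p_1,\dots,p_n$ are isolated, so the local modifications glue with the identity away from them, giving $\mathscr{X}^{\Bub}\to\Delta$ after one common base change. Taking the index one cover of $\mathscr{X}^{\Bub}$ relative to $\Delta$, using $\omega^{[N]}$, gives a flat family of DM stacks. Its central fiber is $\XX_0^{\Bub}$, because forming the index one cover commutes with restriction to the fiber for $\qq$-Gorenstein families. The general fiber is unchanged, since the modification lies over $0$, so $f:\mathscr{X}^{\Bub}\to\Delta$ is a smoothing with $f^{-1}(0)=\XX_0^{\Bub}$. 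Flatness is automatic over a smooth curve once the total space is integral and dominates $\Delta$.

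\textbf{Step 3: the converse statement.} To see that the smoothing of $\XX_0^{\Bub}$ induces a smoothing of the singularities, I would contract the bubble trees in the family. The relative canonical model of $\mathscr{X}^{\Bub}$ over $\Delta$ contracts the rational bubble components, which are $K$-trivial or $K$-negative relative to the contraction. It therefore recovers a $\qq$-Gorenstein family whose central fiber is $\sX_0$, and restricting to a neighbourhood of $p_i$ gives a smoothing of the germ.

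\textbf{Main obstacle.} The hard step will be Step 1: showing that after base change the exceptional locus is genuinely an snc tree of \emph{rational} surfaces, and that the process is compatible with the $\mu_r$-action for the cyclic quotient cases. For this I would first try the explicit classification. For simple elliptic singularities of degree $d$, weighted blowup of the smoothing should give a del Pezzo surface of degree $d$ containing the elliptic curve as an anticanonical divisor. For cusps, the dual cusp, via Looijenga's criterion, should give a rational surface with an anticanonical cycle. Failing that, I would fall back on the general Kulikov/Shepherd-Barron type analysis of degenerations with log canonical central fiber.
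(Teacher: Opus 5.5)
Your route differs from the paper's, and Step 1 contains false claims about the shape of the bubbles; identifying that shape is the real content of the theorem.

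\textbf{How the routes differ.} The paper does not run a semistable reduction of the given family. It writes the bubbles down from the classification:
\begin{itemize}
\item for a simple elliptic point of degree $d$, the minimal resolution $\widetilde X$ is glued along $E$ to a chain $X_1,\dots,X_{m-1}$ of ruled surfaces over $E$, ending in a degree $d$ del Pezzo surface $X_m$ with $E_m\in|-K_{X_m}|$;
\item for a cusp, $X$ is glued along $D$ to $\mathscr{X}_0\setminus V_0$, where $\mathscr{X}_0=\bigcup V_i$ is Engel's Type III degeneration of the Inoue surface, built from the Looijenga pair $(Y,D')$ given by Looijenga's criterion;
\item for a degenerate cusp, it takes a crepant resolution of the smoothing.
\end{itemize}
For cusps, the smoothing then comes from Engel's Theorem 5.2, and the induced smoothing of the singularity from Shepherd-Barron's contractibility of the $V_i$, $i>0$.

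Your direction, modifying the given smoothing over $p$, has a real advantage. Existence of $\mathscr{X}^{\Bub}\to\Delta$ and its agreement with the global family away from $p$ are automatic. Step 3 is also trivial, because your model maps to the base-changed family. The paper, by contrast, has to match Engel's local smoothing with the given one. The cost is that all the content moves into identifying the exceptional components.

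\textbf{The gap in Step 1.} The claim that every component is rational with $q=0$ is false. ``Discrepancy $-1$ plus canonical total space'' gives at most uniruledness, not $q=0$. A concrete counterexample:
\begin{itemize}
\item Take the smoothing $x^3+y^3+z^3=t^2$ of the degree $3$ cone.
\item The $(2,2,2,3)$-weighted blow-up gives an lc place $E_w\cong\pp^2$ of $(\sX,\sX_0)$. On it $t$ has multiplicity $3$, and it meets $\widetilde X$ along the cubic curve $C$.
\item Along $C$ the total space has a transversal $\frac12(1,1)$ singularity. This survives the order $3$ base change and normalization, so that step alone does not give an snc fibre.
\item Resolving it inserts a $\pp^1$-bundle $R$ over $C$. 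Near $p$ the semistable model is $\widetilde X\cup_C R\cup_C S_3$, with $S_3$ a cubic surface.
\item $R$ has log discrepancy $0$, so the relative $(K+\mathscr{Y}_0)$-MMP does not remove it. Contracting its ruling destroys smoothness of the total space.
\end{itemize}
These are exactly the paper's elliptic ruled $X_1,\dots,X_{m-1}$; the word ``rational'' in the statement is loose.

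Likewise, over a cusp the components do not meet in a tree. They form a Type III configuration of Looijenga pairs whose dual complex is a triangulated disc with triple points, subject to the triple point formula (\ref{eqn_triple_point}).

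\textbf{What is missing.} You need an argument that the semistable model can be brought to this specific shape:
\begin{itemize}
\item in the elliptic case, that the end of the chain is a degree $d$ del Pezzo surface with $E$ anticanonical;
\item in the cusp case, the Friedman--Miranda/Shepherd-Barron analysis, i.e.\ the necessity half of Looijenga's conjecture.
\end{itemize}
This requires Kulikov-type modifications rather than a bare relative MMP, since a dlt model need not be snc with smooth total space. Your ``fallback'' is where the proof actually has to happen, and it is essentially the input the paper imports.
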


From the classification of slc singularities, and the construction of index one covers, the singularity germs of the index one cover DM stack  $(\XX_{0}, p_s)$ of $\XX_0$ locally are presented by $([Z/\mu_r],p_s)$ for $r=1, 2, 3, 4, 6$, where $(Z,p_s)$ are simple elliptic, cusp and degenerate cusp singularities.  For each of these singularities in $\sX_0$,  roughly speaking we attach simple normal crossing rational surfaces along anticanonical divisors along a neighborhood of the singularity.  The simple normal crossing rational surfaces attached to cusp singularities are determined by type III degeneration of log Calabi-Yau surfaces, whose smoothing induces a smoothing of cusp singularities.  For this, we have to work on Inoue-Hirzebruch surfaces (which are complex analytic, but not algebraic surfaces), thus, the method works in complex analytic geometry.    

We  define the canonical flat families 
$f^{\Bub}: \mathscr{X}^{\Bub}\to T$ of bubble tree surfaces 
to any scheme $T$ by base change.   Different flat families 
$f^{\Bub}: \mathscr{X}^{\Bub}\to T$ of bubble tree DM stacks are related by three types of flops according to the position where the flopping rational curves $E$ meet with the singular locus of the central fiber $\mathscr{X}^{\Bub}_0$.  

Two  index one cover 
bubble-tree DM stacks $\XX^{\Bub}_1$ and $\XX^{\Bub}_2$ are called $S$-equivalent if they can be the central fibers of the extensions of the same one-parameter family $\mathscr{X}^{\Bub, \circ}\to \Delta\setminus \{0\}$ over the punctured disk to the whole disk  $\Delta$. 

Note that we do not use any lci covers for the simple elliptic, cusp and degenerate cusp singularities.  Thus, we obtain a moduli functor 
$$
M^{\Bub}_N: \Sch_{\cc}\to  (\text{Groupoids})
$$
sending a scheme $T$ to the isomorphism class  of flat families 
$f^{\Bub}: \mathscr{X}^{\Bub}\to T$ modulo $S$-equivalence.

\begin{thm}\label{thm_moduli_Bub_intro}(\cite[Theorem 1.2]{Jiang_Bubble})
    The moduli functor $M^{\Bub}_N$ is represented by a DM stack.   When $N$ is sufficiently large divisible enough, $M^{\Bub}:=M^{\Bub}_N$ is a proper DM stack with projective coarse moduli space. 

    Moreover, there exists a proper morphism 
    $f: M_N^{\Bub}\to M_N$ from the moduli stack of bubble tree index one cover DM stacks to the KSBA moduli space $M_N$. 
\end{thm}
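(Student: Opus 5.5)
The plan has three parts: representability of $M^{\Bub}_N$ as a DM stack, properness via a valuative criterion with S-equivalence absorbing the non-uniqueness of limits, and construction of $f$ by contracting bubble trees.

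\textbf{Representability.} First I show that $M^{\Bub}_N$ satisfies descent in the étale topology. Flat families $f^{\Bub}:\mathscr{X}^{\Bub}\to T$ are defined canonically by base change from one-parameter smoothings, so they glue. S-equivalence is compatible with base change, because it is defined through extensions over a disk of the same punctured family.

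Next I embed the bubble tree stacks in a fixed ambient space. Each coarse space $\sX_0^{\Bub}$ carries a line bundle built from $\omega^{[N]}$, twisted negatively along the attached rational components so that it is ample on them; call it $L_N$. For $N$ large and divisible enough, boundedness of the KSBA data ($K^2$, $\chi$ fixed) should make $L_N$ very ample with fixed Hilbert polynomial. I therefore realize the bubble surfaces inside a locally closed subscheme $H$ of a Hilbert scheme, and write the moduli stack as a quotient $[H/PGL]$ modulo S-equivalence.

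Bubble trees are finite combinatorial data over finitely many singular points. This bounds the number of components and gives a locally closed stratification, so the S-equivalence classes form a constructible equivalence relation that descends to an algebraic stack. For the DM property I show that automorphism groups are finite and unramified. The key input is $H^0(T_{\XX^{\Bub}_0})=0$ relative to the anticanonical gluing divisors: the rational bubbles have $\cc^*$-symmetries, but stability along the double locus plus S-equivalence kills these infinitesimal automorphisms.

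\textbf{Properness.} I use the valuative criterion. Take a family over $\Delta\setminus\{0\}$ and apply properness of $M_N$ (KSBA) to get, after a finite base change, an slc limit $\sX_0$. Then Theorem~\ref{thm_smoothing_bubble_intro} produces a bubble-tree central fiber $\XX_0^{\Bub}$ with a smoothing extending the family. Limits are not unique: different extensions differ by the three flop types. That non-uniqueness is exactly what S-equivalence quotients out, so the limit is unique in $M^{\Bub}_N$, and the stack is separated and proper.

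\textbf{The morphism $f$ and projectivity.} I construct $f$ by contracting the bubble trees. The relative log canonical model of $\mathscr{X}^{\Bub}\to T$, that is, $\mathrm{Proj}$ of the algebra $\bigoplus_m f_*\omega^{[mN]}$, contracts the rational components back to the simple elliptic, cusp and degenerate cusp points. This recovers a $\qq$-Gorenstein family in $M_N$. Flops do not change this model, so S-equivalent objects have the same image, and $f$ is well defined. Properness of $f$ follows because both source and target are proper (using separatedness of $M_N$). Projectivity of the coarse space follows from Kollár's ampleness lemma, applied to the determinant of $f_*L_N^{m}$, or alternatively from finiteness of $f$ onto the projective coarse space of $M_N$ on strata.

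\textbf{Main obstacle.} I expect the hardest step to be algebraicity in the presence of the cusp bubbles. Theorem~\ref{thm_smoothing_bubble_intro} is analytic, going through Inoue–Hirzebruch surfaces. So I must show that the type III anticanonical rational surface trees, and their gluings, are algebraic and move in algebraic families. I would first try to use the Gross–Hacking–Keel algebraic description of type III degenerations to replace the analytic construction locally. Then I would use Artin's criteria, with a deformation-obstruction theory coming from the lci simple normal crossing structure, to get algebraicity of $M^{\Bub}_N$ directly.
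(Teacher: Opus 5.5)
\textbf{The gap.} Your DM and representability step asks S-equivalence to do work it cannot do.

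\emph{Automorphisms.} S-equivalence identifies objects; it does not remove automorphisms of an object. The intermediate elliptic ruled bubbles $X_i$, $1\le i\le m-1$, in \eqref{eqn_simple_E} are glued along two disjoint sections. The fibrewise $\cc^*$-action fixes both sections pointwise, so its vector field induces the zero derivation on the double curves and extends by zero. This gives $\cc^*\subset\Aut(X^{\Bub})$ whenever $m\ge 2$. You need a minimality normalization of the objects (presumably the role of $\XX^{\Bub}_{\min}$ in Definition~\ref{defn_moduli_functor}) together with an actual vanishing argument. For example, take $\widetilde X\cup_E Y$ with $Y$ del Pezzo. A global derivation is zero on $\widetilde X$, because $(\widetilde X,E)$ is of log general type. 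Hence it restricts to zero on $E$, so its $Y$-part lies in $H^0(T_Y(-E))=H^0(\Omega^1_Y)=0$.

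\emph{Boundedness.} The same normalization is needed here. Base change and resolution produce arbitrarily long chains and arbitrarily fine type III triangulations, so finitely many singular points do not bound the number of components.

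\emph{The quotient.} A quotient of $[H/PGL]$ by a constructible equivalence relation is not an algebraic stack in general. You need either a flat (for DM, \'etale) groupoid, or canonical representatives that vary in families. Your twisted $L_N$ does not select such representatives. S-equivalent objects differ by flops, and a line bundle and its strict transform are never both ample across a flop. So which representative appears in $H$ depends on non-canonical choices, and the identification is exactly the step left unproved.

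\textbf{Comparison and smaller issues.} The survey only quotes this theorem, but Definition~\ref{defn_moduli_functor} indicates a different route to $f$. There the KSBA family $\sX\to\overline T$ and the proper map $\phi\colon T\to\overline T$ are part of each object, so $f$ is forgetful and no contraction over an arbitrary base is needed. Your relative canonical model does contract exactly the bubbles fibrewise: $\omega$ is trivial on them ($K_V+D\sim 0$) and is $\sigma^*K_X$ on the main component. You would still owe base-change compatibility of the pushforwards of $\omega^{[mN]}$.

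Prove properness of $f$ by the valuative criterion relative to $M_N$, which is what your lifting argument really is. That works for every $N$, whereas deducing it from properness of $M^{\Bub}_N$ covers only $N\gg0$. In that argument, Theorem~\ref{thm_smoothing_bubble_intro} as stated yields only \emph{some} smoothing of $\XX_0^{\Bub}$. You need a bubble family that agrees with the given punctured family after base change, i.e.\ a simultaneous bubbling of that family, like the crepant resolution used for degenerate cusps.

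Finally, drop the projectivity argument via finiteness of $f$. Over a simple elliptic point of degree $d\le 8$, the del Pezzo bubble $Y$ still varies with the smoothing direction, even with $E$ and $\sO_E(-K_Y)\cong N_{E/\widetilde X}^{\vee}$ fixed. So $f$ is in general not quasi-finite, consistent with the conjecture that $M^{\Bub}_N$ is a blow-up of $M_N$.
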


\begin{rmk}
    In the definition of the moduli functor 
    $M_N^{\Bub}$, we can remove the $S$-equivalence condition and just consider all the deformation families of bubble-tree index one covering DM stacks. Then the moduli stack $M_N^{\Bub}$ is not separated.  But there still exists a proper map to the KSBA moduli stack $M_N^{\Bub}\to M_N$ and the virtual fundamental class can also be defined. 
\end{rmk}

These two moduli stacks should be birational.  We make the following conjecture. 

\begin{con}
    The moduli stack $M_N^{\Bub}$ can be obtained from a finite steps of  blow-up from  the KSBA moduli stack $M_N$ of index one covers along the locus such that the corresponding surfaces contains bad simple elliptic, cusp singularities with embedded dimension $\ge 6$. 

    Moreover,  the moduli stack $M^{\Bub}_N$ admits a birational  morphism 
    $M^{\Bub}_N\dasharrow M^{\lci}_N$ to the moduli stack $M^{\lci}_N$ of lci covers in \cite{Jiang_2022}, which both admit a contraction morphism to the KSBA moduli stack $M_N$, i.e., there is a diagram
    \[
    \xymatrix{
   M_N^{\Bub}\ar@{-->}[rr]\ar[dr]_{f}&& M_N^{\lci}\ar[dl]^{f^{\lci}}\\
   & M_N.&
    }
    \]
\end{con}

In \cite{ABJ}, we will show that the wall crossing between  the moduli stack of twisted stable maps to the moduli stack $\overline{M}_{1,1}$ of stable genus one curves with one marked point and the moduli stack of stable quasi-maps to $\overline{M}_{1,1}$ provides an evidence for the above conjecture.  These moduli stacks are the moduli stack of lci covers over the KSBA moduli stack of elliptic fibered surfaces.

\subsection{Virtual fundamental class}\label{subsec_pot_tautological_intro}

For an lci covering DM stack $\XX^{\lci}$,  let $T_{\XX^{\lci}}$ be the tangent sheaf. 
Roughly speaking we take  $H^1(\XX^{\lci}, T_{\XX^{\lci}})$
as the deformation space, and $H^2(\XX^{\lci}, T_{\XX^{\lci}})$ as the obstruction space.  The lci condition (plus the finite automorphisms of $\XX^{\lci}$) implies all the other $H^i(\XX^{\lci}, T_{\XX^{\lci}})$ vanish for $i\neq 1, 2$.  Then 
taking Serre duals and working in the family with the universal lci covering DM stack  we define the virtual cotangent bundle of $M_N^{\lci}$.  This construction also works for 
the moduli stack 
$M_N^{\Bub}$ since each bubble index one covering DM stack $\XX^{\Bub}$ is lci. 

Let us denote $M_N^{\KSBA}$ to be either $M_N^{\lci}$, the moduli stack of lci covers, or $M_N^{\Bub}$, the moduli stack of bubble tree index one covering DM stacks. 

The moduli stack $M_N^{\KSBA}$ is a fine moduli DM stack.  
Let 
$$p: \mathscr{X}^{\KSBA}\to M_N^{\KSBA}$$ 
be a universal family which is flat, projective and relative Gorenstein. 
Let $\ll^{\bullet}_{\mathscr{X}^{\KSBA}/M_N^{\KSBA}}$ be the relative cotangent complex of $p$ and 
$\omega^{\KSBA}:=\omega_{\mathscr{X}^{\KSBA}/M_N^{\KSBA}}[2]$. 
Let
$$E^{\bullet}_{M_N^{\KSBA}}:=Rp_{*}\left(\ll^{\bullet}_{\mathscr{X}^{\KSBA}/M_N^{\KSBA}}\otimes\omega^{\KSBA}\right)[-1],$$
where the relative dualizing sheaf $\omega^{\KSBA}=\omega_{\mathscr{X^{\KSBA}}/M_N^{\KSBA}}$  satisfies the property
$$\omega_{\mathscr{X}^{\KSBA}/M_N^{\KSBA}}|_{(p)^{-1}(t)}\cong \omega_{\mathscr{X}^{\KSBA}_t}.$$
Then the dualizing sheaf $\omega_{\mathscr{X}^{\KSBA}_t}$ of the index one cover  DM stack $\mathscr{X}^{\KSBA}_t$, which is  locally given by $\omega_{X_t}^{[r]}$  at a singularity germ ($r$ is the index of the singular germ),  
is invertible. 

The Kodaira-Spencer map $\ll^{\bullet}_{\mathscr{X}^{\KSBA}/M_N^{\KSBA}}\to p^{*}\ll_{M_N^{\KSBA}}^{\bullet}[1]$
induces a morphism 
$$\phi: E^{\bullet}_{M_N^{\KSBA}}\to \ll_{M_N^{\KSBA}}^{\bullet}.$$
In the moduli stack $M_N^{\KSBA}$, all objects in $\XX^{\KSBA}$ have only lci singularities. Let $\ll_{\XX^{\KSBA}}$ be the cotangent complex.  The space 
$\Ext^i(\ll_{\XX^{\KSBA}}, \sO_{\XX^{\KSBA}})$ is then nonzero only for  $i=1, 2$. This is because  all the singularities in the objects in $M_N^{\KSBA}$ are lci and  $\Ext^0(\ll_{\XX^{\KSBA}},\sO_{\XX^{\KSBA}})=0$ due to  the stack is of general type. Thus,  the higher obstruction spaces vanish. 
Consequently, the complex $E^{\bullet}_{M_N^{\KSBA}}$ is a two term perfect complex, and the morphism $\phi: E^{\bullet}_{M_N^{\KSBA}}\to \ll_{M_N^{\KSBA}}^{\bullet}$ defines a perfect obstruction theory in the sense of Behrend-Fantechi \cite{BF} and Li-Tian \cite{LT}. 

Different representatives in the $S$-equivalent classes give different universal families over $M_N^{\KSBA}$.  Same proof as in \cite[Theorem 6.7]{Jiang_2022} implies that two universal families $\mathscr{X}_i\to M_N^{\KSBA}$ induces equivalent perfect obstruction theories. 

Once we have a perfect obstruction theory, general theory of the intrinsic normal cone in \cite{BF}, and \cite{Jiang_2022}
implies that there is a virtual fundamental class 
$$
[M_N^{\KSBA}]^{\vir}\in A_{\vd}(M_N^{\KSBA}),
$$
where $\vd$ is the virtual dimension of $M_N^{\KSBA}$.

\begin{thm}\label{thm_virtual_class_MN}
Let $M_N=\overline{M}_{K^2,\chi,N}$ be a KSBA moduli space of general type surfaces. Then 
    the virtual fundamental class in the Chow group $A_{\vd}(M_N)$ of the KSBA space is given as the pushforward
$$
[M_N]^{\vir}=f_{*}[M_N^{\KSBA}]^{\vir}\in A_{\vd}(M_N)
$$
for the proper morphism $f: M_N^{\KSBA}\to M_N$. 
\end{thm}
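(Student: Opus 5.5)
This statement is essentially a definition-cum-existence result, so the plan is to verify that every ingredient in the formula $[M_N]^{\vir}=f_*[M_N^{\KSBA}]^{\vir}$ is well defined and lands in $A_{\vd}(M_N)$.

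First, I will construct the class $[M_N^{\KSBA}]^{\vir}\in A_{\vd}(M_N^{\KSBA})$. By Theorem~\ref{thm_lci_cover_DM_intro} and Theorem~\ref{thm_moduli_Bub_intro}, $M_N^{\KSBA}$ is a DM stack, and for $N$ large and divisible it is proper. The fibres of the universal family $p$ are lci and have finite automorphisms, so $\Ext^i(\ll_{\XX^{\KSBA}},\sO_{\XX^{\KSBA}})$ vanishes for $i\neq 1,2$. Relative Serre duality for the flat, projective, relatively Gorenstein $p$ (with invertible $\omega^{\KSBA}$) then identifies $E^\bullet_{M_N^{\KSBA}}$ fibrewise with the dual of $R\Hom(\ll_{\XX_t},\sO_{\XX_t})[1]$. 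Hence $E^\bullet_{M_N^{\KSBA}}$ is perfect of amplitude $[-1,0]$.

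Next, I will check that $\phi$ is an obstruction theory, i.e.\ $h^0(\phi)$ is an isomorphism and $h^{-1}(\phi)$ is surjective. The standard route is the deformation-theoretic criterion of Behrend--Fantechi: for a square-zero extension, the obstruction to extending a family of lci stacks lies in $\Ext^2(\ll_{\XX_t},\sO\otimes I)$ and the torsor of extensions is $\Ext^1$, both governed by the Kodaira--Spencer class. Given this, \cite{BF} supplies the intrinsic normal cone $\mathfrak{C}\subset h^1/h^0(E^{\bullet\vee})$, and the class $[M_N^{\KSBA}]^{\vir}$ is obtained by intersecting with the zero section. Its degree is $\vd=\operatorname{rk}E^\bullet$, which by Riemann--Roch is computable from $K^2$ and $\chi$ (as in the smooth case, $10\chi-2K^2$).

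Then I will show that the class is independent of choices. Different universal families coming from $S$-equivalent representatives give quasi-isomorphic obstruction theories, by the argument of \cite[Theorem 6.7]{Jiang_2022} quoted above. By the functoriality in \cite{BF}, the virtual class depends only on the equivalence class of the obstruction theory, so $[M_N^{\KSBA}]^{\vir}$ is well defined.

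Finally, the morphism $f\colon M_N^{\KSBA}\to M_N$ is proper (Theorem~\ref{thm_lci_cover_DM_intro}, Theorem~\ref{thm_moduli_Bub_intro}), so proper pushforward on Chow groups of DM stacks with rational coefficients (Vistoli) gives a class $f_*[M_N^{\KSBA}]^{\vir}\in A_{\vd}(M_N)$. Pushforward preserves dimension, which settles the degree.

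The main obstacle is the obstruction-theory step on the Bubble side. There the family is analytic near the Inoue--Hirzebruch pieces, and one must make sure the cotangent complex and Kodaira--Spencer map behave algebraically over $M_N^{\Bub}$, including across the flops relating different families. I would try to handle this by working étale-locally on $M_N^{\KSBA}$, where the universal family is algebraic, and gluing the local obstruction theories using the independence result. A secondary issue is whether the resulting class on $M_N$ is independent of whether one uses $M_N^{\lci}$ or $M_N^{\Bub}$. The theorem as stated only asserts the pushforward for a chosen $f$, so I would treat that comparison as separate and not needed here.
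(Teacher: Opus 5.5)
Your proposal is correct and follows the paper's own route: the Kodaira--Spencer map gives $\phi\colon E^\bullet_{M_N^{\KSBA}}\to\ll^\bullet_{M_N^{\KSBA}}$, the lci condition together with finite automorphisms makes it a two-term perfect obstruction theory, the argument of \cite[Theorem 6.7]{Jiang_2022} makes it independent of the chosen universal family, and $[M_N]^{\vir}$ is then the proper pushforward of the Behrend--Fantechi class. One side remark needs softening, though it does not affect the statement: $\vd=\operatorname{rk}E^\bullet$ equals $10\chi-2K^2$ only on components containing smooth surfaces, because on components of non-smoothable stacky surfaces orbifold Riemann--Roch adds local correction terms (for example, each $\frac{1}{3}(1,1)$ point shifts it by $-\frac{4}{3}$), so in general $\vd$ is not a function of $K^2$ and $\chi$ alone.
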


Thus, we prove that there exists a virtual fundamental class on the KSBA moduli space of general type surfaces, and confirm a conjecture of Donaldson in \cite{Donaldson}.  

\subsection{Tautological invariants}\label{subsec_tautological_invariants}

Let the global index $N$ be sufficiently large divisible enough, then the moduli stack $M_N^{\KSBA}$ and  $M_N$ are both proper.

For the universal family $p: \mathscr{X}\to M_N$ of the KSBA moduli stack, the CM (Chow-Mumford) line bundle $L_{\CM}$ was proved to be ample in \cite{PXu15}.  We define the invariant 
$$
I(M_N):=\int_{[M_N]^{\vir}}(c_1(L_{\CM}))^{\vd}
$$
as the tautological invariant. 
Let $K_{\mathscr{X}/M_N}$ be the relative canonical class.  Similar to \cite{Alexeev_2023}, we can define 
$$
\kappa_i:=p_{*}(c_1(K_{\mathscr{X}/M_N})^{i+2})\in A_i(M_N).
$$
Such $\kappa_i$ are called Kappa classes, which are very important tautological classes for the moduli space $M_N$. The first Kappa class $\kappa_1$ is the first Chern class of the CM line bundle $L_{\CM}$. Thus, 
\begin{defn}\label{defn_tautological_invariants}
Let $i_1,, \cdots, i_m\in \zz_{>0}$ such that $i_1+\cdots +i_m=\vd$. Then 
we define the tautological invariants as
    $$
I_{i_1, \cdots, i_m}(M_N):=\int_{[M_N]^{\vir}}\left(\prod_{j=1}^{m}\kappa_{i_j}\right).
$$
\end{defn}

The Kappa classes were calculated for some smooth KSBA moduli space of general type surface, see \cite{Alexeev_2023}.
In \cite{Donaldson}, Donaldson conjectured that the Miller-Morita-Mumford classes (characteristic classes) of the diffeomorphism group of general type surfaces can be extended to the KSBA moduli space. 
It is thus interesting to study more general tautological classes like Hodge classes  using the virtual fundamental class $[M_N]^{\vir}$.

\subsection{Enumerative invariants on KSBA moduli spaces}

More general tautological classes can also be defined for the KSBA moduli space of stable log general type surfaces \((X, D)\), where
\[
D = \sum_{i=1}^{n} a_i D_i
\]
is a \(\mathbb{Q}\)-divisor. The KSBA moduli space of stable pairs was defined in \cite{Alexeev2}, but the perfect obstruction theory on this moduli space is quite subtle. We hope to return to the construction of the virtual fundamental class for the moduli of stable pairs. 

The moduli space of stable maps from slc surface pairs to a projective variety $W$ was constructed in \cite{Alexeev}.  There is a potential way to define enumerative invariants for the surface version of stable map spaces and answer some questions in \cite{Alexeev3}.

\subsection{Outline}
Here is the outline of the paper. 
In Section \ref{sec_lci_cover} we survey the construction of KSBA moduli space of general type surfaces, the moduli stack of index one covers and the moduli stack of lci covers.  

In Section \ref{sec_bubble_KSBA} we construct bubble tree surfaces and the moduli stack of bubble-tree index one covering DM stacks. 

In Section \ref{sec_moduli_stable_maps} we discuss the 
possibility to define Gromov-Witten like invariants counting surfaces inside projective varieties.

\subsection{Convention}

In this paper, for any DM stack $M$, the Chow ring $A_*(M)$ is always taking $\qq$ as coefficient.

We briefly review the slc nonlog terminal surface singularities. A {\em simple elliptic singularity} $(X,x)$ is a normal Gorenstein surface singularity such that the exceptional divisor of the 
minimal resolution is a smooth elliptic curve $E$. Let $E^2=-d$ for $d\in \zz_{>0}$.  
A cusp singularity is a normal Gorenstein surface singularity $(X,x)$ such that  the exceptional divisor $D$ of the minimal resolution is a cycle of smooth rational curves or a rational nodal curve.  
Let $D=\cup_i D_i$, and the self-intersection sequence is given by $(-d_1, \cdots, -d_s)$, where $d_i\ge 2$, and some $d_j\ge 3$. 
The cusp $D$ is an lci cusp if $\sum_{i}(d_i-2)\le 4$.
A {\em degenerate cusp} $(X,x)$ is a non-normal  Gorenstein surface singularity $S$. If $f: Y\to X$ is a minimal semi-resolution, then the exceptional divisor $D$ is a cycle of smooth rational curves or a rational nodal curve.  
The $Y$ has no pinch points and the irreducible components of $Y$ have cyclic quotient singularities.

\subsection*{Acknowledgments}
Y. J. would like to thank Professor Simon Donaldson for suggesting the project of virtual fundamental class for KSBA spaces, and his continuous encouragement for the study.   Y. J.  thanks Professor Richard Thomas for his   interest in this work, continuous discussion on the lci covering DM stacks, and valuable inputs on the proof of equivalence of perfect obstruction theories.  
This work is partially supported by  NSF DMS-2401484, and a Simon Collaboration Grant.

\section{Moduli space of lci covers}\label{sec_lci_cover}

In this section we survey the construction of  moduli stack of lci covering DM stacks. 

\subsection{SLC singularities}\label{subsec_slc_classification}

The classification of Koll\'ar-Shepherd-Barron on slc singularities is in  \cite[Theorem 4.24]{Kollar-Shepherd-Barron}. 
The slc surface singularities  are exactly as follows:
\begin{enumerate}
\item the semi-log-terminal singularities;
\item the Gorenstein singularities,  where every Gorenstein surface  $S$ is either semi-canonical (which is smooth, normal crossing, a pinch point or a Du Val singularity),
 or  has simple 
elliptic singularities, cusp, or degenerate cusp singularities;
\item the $\mu_2, \mu_3, \mu_4, \mu_6$ quotients of simple elliptic singularities;
\item the $\mu_2$ quotient of  cusps and degenerate cusps. 
\end{enumerate}
Here the  semi-log-terminal (slt) singularities are exactly:
\begin{enumerate}
\item the quotient of $\cc^2$ by Brieskorn \cite{Brieskorn};
\item normal crossing or pinch points;
\item $(xy=0)$ modulo the group action  given by
$x\mapsto \zeta^a x$, $y\mapsto \zeta^b y$,  and $z\mapsto \zeta z$, where $\zeta$ is a primitive 
$r$-th root of unity and $(a,r)=1, (b, r)=1$;
\item $(xy=0)$ modulo the group action 
$x\mapsto \zeta^a y$, $y\mapsto x$,  and $z\mapsto \zeta z$, where $\zeta$ is a primitive 
$r$-th root of unity and $4| r$,  $(a,r)=2$;
\item $x^2=zy^2$ modulo the group action  given by $x\mapsto \zeta^{1+a} x$, $y\mapsto \zeta^a y$,  and $z\mapsto \zeta^2 z$, where $\zeta$ is a primitive 
$r$-th root of unity and $r$ odd, and  $(a,r)=1$.
\end{enumerate}

From the  classification,  except for quotient singularity, simple elliptic, cusp, degenerate cusp singularities and their cyclic quotients, all other singularities are lci. 

\subsection{KSBA moduli space}
Let us review the KSBA moduli space of general type surfaces. 
For a projective surface  $X$, let $K_X$ be the canonical class, which is a Weil divisor class, and let $\omega_X$ be the dualizing sheaf.   For any integer $N>0$ we set 
$$\omega_X^{[N]}:=\sO_X(NK_X)=(\omega_X^{\otimes N})^{\vee\vee}.$$
\cite[Appendix to \S 1, Theorem 7]{Reid} showed that   $\omega_X$ is a torsion-free sheaf of rank one. If $X$ is normal, $\omega_X$ is a divisorial sheaf which satisfies the equivalent conditions in \cite[Appendix to \S 1, Proposition 2]{Reid}.  
In particular, $\omega_X$ is reflexive if $X$ is normal. 

\begin{defn}
    We say that $X$ has slc singularities if the following conditions hold:
\begin{enumerate}
\item the surface $X$ is reduced, Cohen-Macaulay, and has only double normal crossing singularities $(xy=0)\subset \aaa_{\mathbf{k}}^3$ away from a finite set of points;
\item  Let the pair $(X^{\nu}, \Delta^{\nu})$ be the normalization of $X$ with the inverse image of the double curve. Then $(X^{\nu}, \Delta^{\nu})$ has log canonical singularities; 
\item for some $N>0$ the $N$-th reflexive tensor power 
$\omega_X^{[N]}$ for  the dualizing sheaf $\omega_X$ is invertible. 
\end{enumerate}

A stable surface $X$ is a projective slc  surface such that $\omega_X$ is ample. 
\end{defn}

We define the KSBA moduli functor of slc surfaces by fixing  $K^2, \chi, N\in\zz_{>0}$ from \cite{Kollar-Shepherd-Barron}. 
Define 
$$M_N:=\overline{M}_{K^2, \chi, N}:  \Sch_{\cc}\to \text{Groupoids}$$
to be the moduli functor 
sending 
\begin{equation}\label{eqn_condition_functor_M}
T\mapsto  \left\{(f:\sX\to T) \left| \begin{array}{l}
  \text{$\bullet \sX\stackrel{f}{\rightarrow} T$ is a $\qq$-Gorenstein deformation family} \\
  \text{of stable s.l.c. surfaces;} \\
  \text{$\bullet$ Conditions (1)-(5) hold for each geometric fiber;} \\
   \text{$\bullet$ For each geometric point $t\in T$,  we have}\\
   \omega^{[N]}_{\sX/T}\otimes \cc(t)\to \omega_{\sX_t}^{[N]}\text{~ is an isomorphism, where}\\
   \omega^{[N]}_{\sX/T}=j_*(\omega^{\otimes N}_{\sX^0/T}),  \text{and~} j: \sX^0\to \sX
   \text{~ is the inclusion}\\
   \text{of the locus where $f$ is Gorenstein.}
     \end{array}  \right\}\right. 
\end{equation}
modulo equivalence. The Conditions (1)-(5) are given by
 \begin{enumerate}
 \item each fiber of the family $f: \sX\to T$ is a reduced projective surface;
 \item  each $\sX_t$ is connected with only s.l.c. singularities;
 \item  the sheaf $\omega_{\sX_t}^{[N]}$ which is defined by $\omega_{\sX_t}^{[N]}=j_*(\omega_{(\sX_t)^0}^{\otimes N})$ and 
 $j: (\sX_t)^0\to \sX_t$ is the inclusion of Gorenstein locus of $\sX_t$, is an ample line bundle;
 \item $K_{\sX_t}^{2}=\frac{1}{N^2}(\omega_{\sX_t}^{[N]}\cdot \omega_{\sX_t}^{[N]})=K^2$ for any $t\in T$;
 \item $\chi(\sO_{\sX_t})=\chi$ for $t\in T$.
 \end{enumerate}
 
In \cite{Kollar-Shepherd-Barron}, and \cite{KP15}, the authors proved that 
for the fixed  $K^2, \chi, N\in\zz_{>0}$,  the functor $M_N$ is represented by a  DM stack of finite type.   Suppose that  $N>0$ is large divisible enough, then the stack 
 $M_N$ is a proper DM stack with projective coarse moduli space.

\subsection{Moduli stack of index covers}

An slc surface $X$ is $\qq$-Gorebstein, not Gorenstein in general.  There is a canonical way to enhance $X$ to a Gorenstein DM stack through index one covers.  

Let $(X,x)$ be an slc singularity germ. The local index of $x\in X$ is the least integer $r$ such that $\omega_X^{[r]}$ is invertible around $x$.  Fix an isomorphism 
$\theta: \omega_X^{[r]}\to \sO_X$, we define 
$$Z:=\spec_{\sO_X}\left(\sO_S\oplus\omega^{[1]}_X\oplus\cdots\oplus \omega_X^{[r-1]}\right),$$
where the multiplication on $\sO_Z$ is defined by the isomorphism  $\theta$.  Then $\pi: Z\to X$ is a cyclic cover of degree $r$ which is called the index one cover. The morphism $\pi$ is \'etale over $X\setminus x$; and the surface $Z$ is Gorenstein.  The germ $(Z,x)$ is also slc. This is uniquely determined locally in the \'etale topology. These local index one cover DM stack $[Z/\mu_r]$ glue to give an index one covering DM stack $\XX\to X$ with $X$ the coarse moduli space.

The $\qq$-Gorenstein deformation $(x\in \sX)/(0\in T)$ is induced by an equivariant deformation of the index one cover of $(x\in X)$. 
Let $\sX/T$ be a flat family of  slc surfaces, and let $\omega_{\sX/T}$ be the  relative dualizing sheaf. 
From \cite[\S 5.4]{Kollar-Shepherd-Barron}, \cite[Appdedix to \S 1]{Reid} and \cite[\S 3.1]{Hacking}, we have 
$\omega_{\sX/T}^{[N]}:=(\omega_{\sX/T}^{\otimes N})^{\vee\vee}=i_*(\omega_{\sX^0/T}^{\otimes N})$,
where $i: \sX^{0}\hookrightarrow \sX$ is the inclusion of the Gorenstein locus.  
Let 
$\sZ/(0\in T)$ be a $\mu_r$-equivariant deformation of $Z$ inducing a $\qq$-Gorenstein deformation $\sX/(0\in T)$ of $X$, 
then  we have  
$$\sZ=\spec_{\sO_{\sX}}(\sO_{\sX}\oplus \omega^{[1]}_{\sX/T}\oplus\cdots\oplus \omega_{\sX/T}^{[r-1]}),$$
where the multiplication of $\sO_{\sZ}$ is given by fixing a trivialization of $\omega_{\sX/T}^{[r]}$.  

We can glue  these data of index one covers everywhere  locally on $\sX/T$  to form a DM stack 
$\XX/T$ which we call the index one covering DM stack associated with  $\sX/T$.  The dualizing sheaf $\omega_{\XX/T}$ is invertible. 
There is a one-to-one correspondence between the isomorphism classes $\{f:\sX\to T\}$ of  $\qq$-Gorenstein flat families of slc surfaces and the isomorphism classes $\{f:\XX\to T\}$ of index one covering DM stacks.   In \cite{AH}, \cite[\S 5]{Jiang_2022}, the moduli functor
$$
M_N^{\ind}:=\overline{M}^{\ind}_{K^2, \chi,N}: \Sch_{\cc}\to \text{Groupoids}
$$
is defined by sending 
$$
T\mapsto \{\XX\stackrel{f}{\rightarrow} T\}
$$
such that the coarse moduli space $f: \sX\to T$ of 
$f: \XX\to T$ is a $\qq$-Gorenstein deformation family satisfying the condition in (\ref{eqn_condition_functor_M}).

\begin{thm}\label{thm_index_one_DM_Moduli_intro}(\cite{AH}, \cite[Theorem 5.1]{Jiang_2022}) 
The moduli functor $M_N^{\ind}$ of index one covers  is represented by a DM stack $M_N^{\ind}$ of finite type, and it  is isomorphic to the KSBA moduli stack $M_N$. 
\end{thm}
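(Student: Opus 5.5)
The plan is to exhibit mutually inverse morphisms of stacks (equivalences of fibered categories over $\Sch_{\cc}$) between $M_N^{\ind}$ and $M_N$. Representability of $M_N^{\ind}$ by a finite-type DM stack then follows from the known representability of $M_N$ (\cite{Kollar-Shepherd-Barron}, \cite{KP15}).

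First, define $\Phi: M_N^{\ind}\to M_N$ by sending a family $\XX\to T$ of index one covering DM stacks to its relative coarse moduli space $\sX\to T$. I need three facts here.
\begin{enumerate}
\item Coarse spaces commute with arbitrary base change $T'\to T$. This holds because the stabilizers $\mu_r$ are linearly reductive in characteristic $0$, so taking invariants is exact.
\item $\sX\to T$ is flat. Locally $\sO_{\sX}=\sO_{\sZ}^{\mu_r}$ is a direct summand of the flat $\sO_T$-module $\sO_{\sZ}$.
\item $\sX\to T$ is a $\qq$-Gorenstein family satisfying (\ref{eqn_condition_functor_M}). This is built into the definition of $M_N^{\ind}$.
\end{enumerate}
Morphisms (cartesian squares of stacks) descend to cartesian squares of coarse spaces, so $\Phi$ is a functor of groupoids compatible with pullback.

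Second, define $\Psi: M_N\to M_N^{\ind}$. Given a $\qq$-Gorenstein family $\sX\to T$, work étale locally near a point $x$ of local index $r$. Choose a trivialization $\theta$ of $\omega^{[r]}_{\sX/T}$ and form
\[
\sZ=\spec_{\sO_{\sX}}\Bigl(\bigoplus_{i=0}^{r-1}\omega^{[i]}_{\sX/T}\Bigr),
\]
with the $\mu_r$-action by weights. Then take the local chart $[\sZ/\mu_r]$. A different choice of $\theta$ differs by a unit $u$. Étale locally $u$ has an $r$-th root, and that root gives a $\mu_r$-equivariant isomorphism of covers, unique up to the $\mu_r$-action. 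So the charts glue canonically to a DM stack $\XX\to T$. This is exactly the gluing described before the theorem.

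Next I must check that $\XX\to T$ is flat and that its formation commutes with base change. Each $\omega^{[i]}_{\sX/T}$ needs to be flat over $T$, and its formation needs to commute with base change. This is precisely the Kollár condition. The $\qq$-Gorenstein hypothesis in (\ref{eqn_condition_functor_M}) asserts this for $i=N$. For general $i$ it is the statement, from \cite[\S 5.4]{Kollar-Shepherd-Barron} and \cite[\S 3.1]{Hacking}, that a $\qq$-Gorenstein deformation is induced by an equivariant deformation of the index one cover.

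Finally, I verify $\Phi\circ\Psi\cong \mathrm{id}$ and $\Psi\circ\Phi\cong\mathrm{id}$.
\begin{itemize}
\item For $\Phi\circ\Psi$: the $\mu_r$-invariants of $\bigoplus_i\omega^{[i]}$ are the weight-$0$ piece $\sO_{\sX}$.
\item For $\Psi\circ\Phi$: given $\XX$ with coarse space $\sX$, the eigensheaf decomposition of $\pi_*\sO_{\XX}$ locally identifies the weight-$i$ part with $\omega^{[i]}_{\sX/T}$. The reason is that $\omega_{\XX/T}$ is invertible, and on fibers it is étale over the Gorenstein locus. Reflexive extension across the codimension-$2$ non-Gorenstein locus then gives the identification, which recovers $\XX$ canonically.
\end{itemize}

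The main obstacle is the base-change and flatness statement for all the reflexive powers $\omega^{[i]}_{\sX/T}$ in the construction of $\Psi$. This is what makes $\Psi$ well defined on families over arbitrary, possibly non-reduced, bases $T$. I would handle it by reducing to the local statement that $\qq$-Gorenstein deformations of $x\in X$ correspond to $\mu_r$-equivariant deformations of $Z$. I would take that statement from \cite{Kollar-Shepherd-Barron} and \cite{Hacking}, as in \cite{AH}, rather than re-proving it.
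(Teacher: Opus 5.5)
Your proposal is correct and takes essentially the same route as the paper. The theorem there rests on the one-to-one correspondence between $\qq$-Gorenstein families of stable slc surfaces and families of index one covering DM stacks, given in one direction by the relative index one cover $\spec_{\sO_{\sX}}\bigl(\bigoplus_{i=0}^{r-1}\omega^{[i]}_{\sX/T}\bigr)$ and in the other by the relative coarse moduli space, with representability then inherited from $M_N$ \cite{Kollar-Shepherd-Barron}, \cite{KP15}; you also correctly identify that the real input is base change for all reflexive powers $\omega^{[i]}_{\sX/T}$, which is what the $\qq$-Gorenstein hypothesis in the sense of \cite{Hacking} supplies, rather than the condition on $\omega^{[N]}_{\sX/T}$ alone.
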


\subsection{Lci covers}\label{subsec_lci_covers}

 In this section we review the construction of lci covers of slc non-log terminal singularities, and then define moduli stack of lci covers. We survey the construction of the three classes of slc singularities--simple elliptic singularities, cusp singularities, and degenerate cusp singularities.   More details can be found in \cite{Jiang_2022}.

\subsubsection{Simple elliptic singularities}

Let $(X,x)$ be a simple elliptic singularity of degree $d$.  This means that if  $\sigma: X\to S$ is the minimal resolution such that 
$A=\sigma^{-1}(0)$ is the exceptional elliptic curve, then  $d:=-A\cdot A$.    The local embedded dimension of the singularity is given by 
$\max(3, d)$.   It is known from \cite{Laufer}, that the simple elliptic singularity $(X,x)$ is an $\lci$ singularity if the negative self-intersection $d\le 4$.  If $d\ge 5$, then $(X,x)$ is never $\lci$. 
The link $\Sigma$ of $(X,x)$ is a $S^1$-bundle over the torus $T^2$ and $H_1(\Sigma,\zz)=\zz^2\oplus \zz_d$. The  fundamental group of the link is $\pi_1(\Sigma)=\zz^2\ltimes\zz$. 
There is an lci cover 
$$\pi: (\widetilde{X},x)\to (X,x)$$ 
for these singularities by a degree one simple elliptic singularity determined by surjective morphism $H_1(\Sigma,\zz)=\zz^2\oplus \zz_d\to \zz_d$.  The transformation group $G=\zz_d$.

From \cite{Pinkham2}, \cite{Kollar-Shepherd-Barron}, the simple elliptic singularity $(X,x)$ admits a smoothing if and only if $1\leq d\leq 9$.   
The lci cover $\pi: (\widetilde{X},x)\to (X,x)$ does not always admits an lci smoothing lifting. 
We have the following result  in \cite[Theorem 1.3]{Jiang_2023}. 
\begin{thm}\label{thm_elliptic_singularity_lci_lifting}(\cite[Theorem 1.3]{Jiang_2023})
Let $(X,x)$ be a simple elliptic surface singularity,  and $(Y, A)$ its minimal resolution. Then $(X,x)$ admits an $\lci$ smoothing lifting by a simple elliptic singularity $(\widetilde{X},0)$ of degree $\leq 4$ only when 
$d\neq 5, 6, 7$ and $1\le d\le 9$.
\end{thm}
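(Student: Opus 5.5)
The plan is to translate lci smoothing liftings into a statement about $\zz_d$-equivariant smoothings of the lci cover, and then to analyze these through the canonical threefold singularity (and its Sasakian link) produced by a one-parameter smoothing.

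First, use the classification of smoothable simple elliptic singularities (\cite{Pinkham2}, \cite{Kollar-Shepherd-Barron}). $(X,x)$ is smoothable iff $1\le d\le 9$, and by \cite{Laufer} it is already lci when $d\le 4$. So for $d\le 4$ the lifting is trivial: take $\widetilde{X}=X$ with trivial group. It remains to treat $5\le d\le 9$. The lci cover $\pi:(\widetilde{X},x)\to (X,x)$ comes from $H_1(\Sigma,\zz)=\zz^2\oplus\zz_d\to\zz_d$, and $\widetilde{X}$ is simple elliptic of degree $\le 4$. An lci smoothing lifting is then the same thing as a $G$-equivariant smoothing $\widetilde{\sX}\to\Delta$ of $\widetilde{X}$ whose quotient $\widetilde{\sX}/G\to\Delta$ smooths $X$. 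The group $G$ must act freely off the singular point on the total space, since the quotient of a smoothing should again be a smoothing.

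Second, I would use the standard model of smoothings of degree-$d$ simple elliptic singularities. The cone over an elliptic normal curve of degree $d$ in $\pp^{d-1}$ smooths, and its total space is (up to deformation) the affine cone over a del Pezzo surface $S_d$ of degree $d$, with the elliptic curve sitting as an anticanonical section. This makes the total space $\sX$ a canonical Gorenstein threefold singularity whose link $L$ is a circle bundle over $S_d$. A lifting to an lci smoothing with group $G=\zz_d$, acting freely away from the vertex, therefore forces $\pi_1(L)$ to surject onto the relevant $\zz_d$, compatibly with the restriction $\pi_1(\Sigma)\to\pi_1(L)$.

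So the key computation is $\pi_1(L)$. From the Gysin/homotopy sequence of $S^1\to L\to S_d$, with $S_d$ simply connected, $\pi_1(L)$ is cyclic of order equal to the divisibility of $-K_{S_d}$ in $\Pic(S_d)$.
\begin{itemize}
\item For $d=9$ ($\pp^2$), $-K=3H$, giving $\zz_3$.
\item For $d=8$, either $\pp^1\times\pp^1$ with $-K=2(1,1)$, giving $\zz_2$, or the blow-up $\mathbb{F}_1$ of $\pp^2$, where $-K$ is primitive.
\item For $d=5,6,7$, the surface is a blow-up of $\pp^2$ in at least two points, $-K$ is primitive, and $L$ is simply connected.
\end{itemize}
In the simply connected cases no nontrivial free quotient exists, so no lci cover of the smoothing exists. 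This proves the obstruction for $d=5,6,7$.

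Conversely, for $d=8,9$ I would construct the lift explicitly. For $d=9$, take the $\zz_3$-quotient of the smoothing whose total space is $\cc^3/\zz_3$ (the cone over $(\pp^2,\sO(3))$), covered by $\cc^3$. A cubic-type equivariant function then gives a degree-$\le 4$ (indeed degree $3$, hence lci) simple elliptic cover. The case $d=8$ with $\pp^1\times\pp^1$ is analogous, using the double cover by the cone over $(\pp^1\times\pp^1,\sO(1,1))$, the quadric cone.

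The main obstacle is making the reduction in the second paragraph rigorous. One must show that every smoothing component of $X$ for $d=5,6,7$ has total space deformation-equivalent to a del Pezzo cone whose link is simply connected; the case $d=8$ has two components, of which only one lifts. One must also show that any equivariant lci lifting induces a nontrivial étale cover of the punctured total space. For this I would use that the lifted total space is normal and Gorenstein, so $\widetilde{\sX}\setminus 0\to\sX\setminus 0$ is étale, and then invoke Pinkham's description of smoothing components via negative weight deformations.
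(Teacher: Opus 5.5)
Your proposal follows essentially the paper's reasoning. For $d=5,6,7$ the total space of the smoothing is the canonical threefold cone over the degree-$d$ del Pezzo surface, whose $5$-dimensional Sasakian link is simply connected because $-K_{S_d}$ is primitive, so the nontrivial cover $\widetilde{X}\to X$ cannot extend equivariantly; the cases $d\le 4$, $d\ge 10$ and $d=8,9$ are handled as you describe.

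For the step you flag, argue with the Milnor fiber rather than the total space, since the total space is not determined by the smoothing component (base change $t=s^k$ changes it). An equivariant lift restricts to a connected free $G$-cover of $F\simeq S_d\setminus E$, which depends only on the component by Pinkham and is simply connected for $d=5,6,7$. Also, do not fix $G=\zz_d$ in advance, since your lifts for $d=8,9$ use $G=\zz_2$ and $G=\zz_3$.
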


From the classification of slc singularities in \S \ref{subsec_slc_classification}, 
the index one covering DM stack $\pi: \XX\to X$ may contain the quotient stack $[(X,x)/\mu_r]$ for $r=2,3,4,6$, where $(X,x)$ is a simple elliptic singularity of degree $d$. The $\qq$-Gorenstein smoothing of the cyclic quotient of $(X,x)$ is the same as the smoothing of the DM stack $[(X,x)/\mu_r]$. The quotient $(X,x)/\mu_r$ of $(X,x))$ is a  rational singularity, whose resolution graph of the minimal resolution is a tree.  In this case the link $\Sigma$ of $(X,x)/\mu_r$ is a rational homology sphere so that its first homology group is a finite abelian group.   We use the universal abelian cover of Newmann-Wahl to construct the lci cover. 

\begin{thm}\label{thm_minimally_elliptic_universal2}(\cite[Theorem 5.1]{Jiang_2022})
If $(X,x)$ is a $\zz_2, \zz_3, \zz_4$, or $\zz_6$ quotient of  a simple elliptic singularity,   then there exists the universal abelian cover 
$(\widetilde{X},x)$ with transformation group $G$.  Moreover, 
the $G$-equivariant  deformations of $(\widetilde{X}, x)$ gives $\qq$-Gorenstein deformations of $(X,x)$. In particular, there exists a $G$-equivariant one-parameter smoothing or deformation of $(\widetilde{X},x)$. 
\end{thm}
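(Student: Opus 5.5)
The plan is to use the Neumann--Wahl theory of universal abelian covers of rational homology sphere links. First we check that the link is a rational homology sphere. Let $(Z,x)$ be a simple elliptic singularity with an action of $\mu_r$, $r\in\{2,3,4,6\}$, such that $(X,x)=(Z,x)/\mu_r$ is slc. The action lifts to the minimal resolution and acts on the exceptional elliptic curve $E$ with fixed points; for $r=2,3,4,6$ we have $E/\mu_r\cong\pp^1$. Resolving the cyclic quotient points then gives a star-shaped resolution graph: a central rational curve with three or four branches of type $(2,2,2,2)$, $(3,3,3)$, $(2,4,4)$ or $(2,3,6)$, all of them rational chains. The graph is a tree of rational curves, so the link $\Sigma$ is a $\qq HS^3$ and $H:=H_1(\Sigma,\zz)$ is finite of order $|\det|$ of the intersection matrix. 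This gives the universal abelian cover $(\widetilde X,x)\to(X,x)$, unramified off $x$, with transformation group $G=H$.

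Next we identify $(\widetilde X,x)$. Here we would invoke Neumann's theorem: for a star-shaped graph (the link is Seifert fibered), the universal abelian cover is a Brieskorn--Hamm complete intersection
$$\widetilde X=\{a_{i1}z_1^{\alpha_1}+\cdots+a_{in}z_n^{\alpha_n}=0,\ i=1,\dots,n-2\},$$
where the $\alpha_j$ are the Seifert invariants of the branches. Explicitly these are $z_1^2+z_2^2+z_3^2+z_4^2$-type pencils for $(2,2,2,2)$, $x^3+y^3+z^3$, $x^2+y^4+z^4$ and $x^2+y^3+z^6$. The group $G$ acts diagonally by roots of unity. These are again simple elliptic singularities of small degree, hence lci, and the splice-quotient description shows that $G$ acts freely off the origin.

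Then we show that $G$-equivariant deformations of $\widetilde X$ give $\qq$-Gorenstein deformations of $X$. The index one cover $Z\to X$ is an abelian cover unramified off $x$, so it is a quotient of the universal abelian cover: $\widetilde X\to Z\to X$, with $G\to\mu_r$ surjective and kernel $K$ satisfying $Z=\widetilde X/K$. Given a $G$-equivariant deformation $\widetilde{\sX}/T$, the quotient $\widetilde{\sX}/K$ is a $\mu_r$-equivariant deformation of $Z$. This needs the fact that $K$ acts freely in codimension one, so that taking invariants commutes with base change ($K$ is finite, hence linearly reductive). By the description of $\qq$-Gorenstein deformations recalled above via $\sZ=\spec(\oplus\omega^{[i]}_{\sX/T})$, this gives a $\qq$-Gorenstein deformation of $X$.

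Finally, for the existence of an equivariant smoothing, we perturb the equations inside the $G$-invariant (more precisely $G$-semi-invariant, matching the character of each equation) monomials. The simplest choice is adding constants $t\cdot c_i$ when the equations are $G$-invariant. Otherwise we use the Pinkham-type negative-weight deformations $t\cdot m_i$, with $m_i$ a monomial of the same character, and check that the generic fiber is smooth. The main obstacle is this last step together with checking that every equation's character admits a suitable smoothing monomial. I would first verify it case by case for the four diagonal types (few cases), using that the Milnor fiber exists equivariantly by the Wahl/Neumann--Wahl criterion for splice-quotients (the semigroup condition holds trivially for star-shaped graphs).
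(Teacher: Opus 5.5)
Your treatment of the first two assertions is sound and follows the route the paper indicates. The quotient is rational with a star-shaped tree of rational curves, so the link is a $\qq HS^3$ and the Neumann--Wahl universal abelian cover exists. The index one cover $Z\to X$ is dominated by it, with $Z=\widetilde{X}/K$. Taking $K$-invariants is exact for finite $K$ in characteristic $0$, so a $G$-equivariant deformation of $\widetilde{X}$ becomes a $\mu_r$-equivariant deformation of $Z$, i.e.\ a $\qq$-Gorenstein deformation of $X$. Two small points. First, $E/\mu_r\cong\pp^1$ requires ruling out translations, which holds because $X$ is not Gorenstein. Second, Neumann's identification of $\widetilde{X}$ with a Brieskorn--Hamm complete intersection is analytic only for quasi-homogeneous $X$. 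That holds here, since $Z$ is a cone and the $\mu_r$-action can be taken $\cc^*$-compatible, but a Seifert link alone is not enough.

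The genuine gap is the last step, where you try to build a $G$-equivariant \emph{smoothing} uniformly by perturbing the diagonal equations with semi-invariant monomials and checking ``four cases''. First, $G$ is not determined by the branch type. For the $\zz_2$-quotient of a simple elliptic $Z$ of even degree $d$, the central curve has self-intersection $-(d/2+2)$ and $|G|=|H_1(\Sigma)|=16\cdot d/2=8d$, while $\widetilde{X}=V(2,2,2,2)$ always has degree $4$. So there are infinitely many cases, not four. Worse, for $d\ge 10$ no $G$-equivariant smoothing exists at all. Such quotients do exist for every even $d$: take the cone over $(E,\sO_E(d\,p_0))$ with $p\mapsto -p$, linearized by $-1$ at the $2$-torsion points. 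Suppose $\widetilde{\sX}\to\Delta$ were a $G$-equivariant smoothing. By your own argument, $\widetilde{\sX}/K\to\Delta$ is a flat deformation of $Z$. Its general fibre is a quotient of a smooth surface and is Gorenstein, being a deformation of $Z$, so it has only Du Val singularities. Openness of versality would then make $Z$ smoothable, contradicting Pinkham's bound $d\le 9$. So no choice of semi-invariant monomials can work in general. The Neumann--Wahl semigroup and congruence conditions do not help either: they govern the existence of splice-quotient analytic structures, not equivariant Milnor fibres. You should read the final sentence with its ``or deformation'' alternative and derive it from the second assertion. A $G$-equivariant one-parameter deformation of the complete intersection $\widetilde{X}$ always exists, for instance from the $G$-fixed part of a $G$-equivariant semi-universal deformation (Rim), and it induces a $\qq$-Gorenstein deformation of $X$. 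It can be a smoothing only under an additional smoothability hypothesis, which cannot be proved unconditionally.
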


\subsubsection{Cusp singularities}

Let $(X,x)$ is a cusp singularity, which means that if $\pi: Y\to X$ is the minimal resolution, the resolution cycle $E$ is a circle of rational curves.
The smoothing of cusps have been studied for a long time going back to the Looijenga's conjecture in \cite{Looijenga}. 
Let us explain the idea since we use it in our lci cover construction.

Cusp singularities typically exist in pairs $(X, x)$ and $(X^\prime, x^\prime)$, which arise as the two singularities of an Inoue-Hirzebruch surface
$(\overline{V}, x, x^\prime)$
see \cite{Inoue}, and  \cite{Looijenga}. Let $D$ and $D^\prime$ be the resolution cycles of these two cusp singularities, which  consist of rational curves with sequences of negative self-intersections $d=(d_1, \cdots, d_r)$ and $d^\prime=(d_1^\prime, \cdots, d_s^\prime)$, respectively.
Each sequence must satisfy the condition $d_i \ge 2$ for all $i$, and at least one $d_j \ge 3$ to ensure the intersection matrix of $D$ is negative-definite. Furthermore, the sequence $d$ can be recovered from $d^\prime$, and vice versa. This is a consequence of the fact that the monodromy matrix of the cusp $D$ is conjugate to that of $D^\prime$.

A pair $(Y, D)$ is called a Looijenga pair (or an anticanonical pair) if $Y$ is a smooth rational surface and $D \in |-K_Y|$ is an anticanonical divisor. The smoothing of cusp singularities is closely  related to Looijenga pairs by mirror symmetry.

\begin{thm}\label{thm_Looijenga_conjecture}(\cite{Looijenga}, \cite{GHK15}, \cite{Engel})
    A cusp  $(X,x)$ admits a smoothing if and only if the dual cycle $D^\prime$ of the dual cusp $(X^\prime, x^\prime)$ lies in a rational surface $Y$ as an anticanonical divisor. 
\end{thm}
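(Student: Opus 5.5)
The strategy is to prove both implications of Theorem \ref{thm_Looijenga_conjecture} using the Inoue--Hirzebruch surface $(\overline{V},x,x')$, which carries the pair of dual cusps. The cycles $D$ and $D'$ are linked through the monodromy description: their self-intersection sequences determine each other, and the boundary of a regular neighbourhood of $D'$ is the link of $(X,x)$ with reversed orientation.

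\emph{Necessity} (after Looijenga). Suppose $\mathcal{X}\to\Delta$ smooths $(X,x)$, with Milnor fibre $F$. I would glue $F$ to a regular neighbourhood of $D'$ along the common link $\Sigma$; this is possible because $\partial N(D')\cong -\Sigma$. The result is a compact $4$-manifold $Y$ containing $D'$. Two checks are then needed. First, since $(X,x)$ is Gorenstein, $F$ carries a nowhere-vanishing holomorphic $2$-form, and this form extends with a simple pole along $D'$, so $D'$ is anticanonical. Second, $Y$ must be a rational surface. To get an actual complex structure on $Y$ rather than just a smooth manifold, I would use the deformation-theoretic argument: compactify the family by gluing in the dual-cusp neighbourhood, so that the general fibre becomes a compact surface containing $D'$ with $K+D'=0$. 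Rationality then follows from the classification of surfaces, since $D'$ is a cycle of rational curves with negative-definite intersection matrix and so $H^0(mK)=0$ for all $m>0$, while $q=0$ follows from the topology of the link.

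\emph{Sufficiency.} This is the hard direction, proved in \cite{GHK15} and \cite{Engel}. Given a Looijenga pair $(Y,D')$, I would use the Gross--Hacking--Keel mirror construction. Scattering diagrams and theta functions on the tropicalization of $(Y,D')$ build a family over $\spec \cc[NE(Y)]$ whose fibre over the origin is the vertex $\mathbb{V}_n$, a cycle of planes. Its general fibre is the affine mirror $U$, which contains a neighbourhood of the dual cusp $(X,x)$ at infinity. Equivalently, following Engel, one constructs a type III degeneration of anticanonical pairs whose central fibre is glued from $(Y,D')$ together with a smoothing of the cusp. Checking that the family is flat and that the theta functions converge globally is the main obstacle. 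I would prove convergence first for toric models, where the scattering diagram is finite or controlled, and then reduce the general case to this one by blow-ups along $D'$. The final step is to identify the central singularity of the dual fibre with $(X,x)$ by matching the monodromy matrices, as recalled above.
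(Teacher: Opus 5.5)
The paper gives no proof of this statement. It quotes Looijenga for necessity and Gross--Hacking--Keel and Engel for sufficiency. Your outline follows those sources, but at the decisive step of each direction it either restates the goal or aims at the wrong object.

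\emph{Necessity.} A smoothing $\mathcal{X}\to\Delta$ is not holomorphically trivial near the boundary of the Milnor ball. So ``compactify the family by gluing in the dual-cusp neighbourhood'' is exactly what has to be proved. The topological gluing $F\cup_\Sigma N(D')$ gives no complex structure, and your $2$-form argument presupposes one.

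The missing idea is globalisation. Let $\overline{V}'$ be the Inoue--Hirzebruch surface $V$ with $D$ contracted to $x$ and $D'$ kept. One shows, via Serre duality on the Inoue--Hirzebruch surface, that the local-to-global obstruction vanishes; this is an $H^2$ of the tangent sheaf, logarithmic along $D'$. Hence the given smoothing of $(X,x)$ is induced by a deformation of the pair $(\overline{V}',D')$, whose nearby fibre is a smooth compact $Y\supset D'$ with $K_Y=-D'$.

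Your rationality step also fails as stated, because $q=0$ does not follow from the topology of the link. Since $H_1(\Sigma;\qq)\cong\qq$ maps isomorphically onto $H_1(N(D');\qq)$, Mayer--Vietoris gives $b_1(Y)=b_1(F)$. You therefore need one of two inputs:
\begin{enumerate}
\item the vanishing $b_1(F)=0$ for Milnor fibres (Greuel--Steenbrink); or
\item the flat-family count $\chi(\sO_Y)=\chi(\sO_{\overline{V}'})=\chi(\sO_V)+p_g(X,x)=0+1$, together with $p_g(Y)=h^0(\sO_Y(-D'))=0$.
\end{enumerate}
This input cannot be skipped: $V$ itself is non-rational ($b_1=1$), yet $-K_V=D+D'$ is effective and supported on rational curves. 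Also, negative definiteness plays no role in $H^0(mK_Y)=0$; that only uses that $-K_Y$ is nonzero and effective.

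\emph{Sufficiency.} The family you describe degenerates to $\mathbb{V}_n$, which is a degenerate cusp, not $(X,x)$. Nothing in your argument uses that $D'$ is negative definite, which is automatic here since $D'$ carries the self-intersections of the resolution cycle of $(X',x')$. Without that there is no cusp to smooth, and your final step ``identify the central singularity of the dual fibre'' has nothing to refer to.

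In Gross--Hacking--Keel, negative definiteness makes the classes $[D'_i]$ span a face $F$ of the cone of curves that contains no class of an $\mathbb{A}^1$-curve. So every wall-crossing term vanishes on the stratum $\spec\cc[F]$. There the family is the standard degeneration, built from Hirzebruch's toric description of cusps, of the cusp whose cycle is dual to $D'$ (that is, $(X,x)$) to $\mathbb{V}_n$. This is where your monodromy matching belongs.

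The mirror family is thus a deformation of $\mathbb{V}_n$ with $(X,x)$ as a fibre and smooth generic fibre. It is only formal: the theta functions are genuinely infinite series already on that stratum, and the scattering diagram of a toric model is infinite in general. So the route is not ``global convergence via finiteness on toric models'', but versality of $\mathbb{V}_n$ plus Artin approximation, which yields an honest smoothing of $(X,x)$.

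Your account of Engel is also inverted. $(Y,D')$ is the \emph{general} fibre of his type III degeneration, not a piece of its central fibre. That central fibre is $V_0\cup\bigcup_{i>0}V_i$, with $V_0$ the Inoue--Hirzebruch surface. The smoothing of $x$ comes from contracting the $V_i$, $i>0$, in the total space (Shepherd-Barron).
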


The link $\Sigma$ of the cusp singularity is a $T^2$-bundle over the circle $S^1$ and $H_1(\Sigma,\zz)=\zz\oplus G$, where $G$ is a finite abelian group. 
Let   $[d_1, \cdots, d_r]$ be the negative self-intersection sequence of $D$.   Then the monodromy of the link is given by the matrix 
$$
A=
\mat{cc}
0&-1\\
1&d_r
\rix\cdots \mat{cc}
0&-1\\
1&d_1
\rix
=
\mat{cc}
a&b\\
c&d
\rix,
$$
such that $\pi_1(\Sigma)=\zz^2\rtimes_{A}\zz$. 
From \cite[Proposition 2.5]{NW},  the cusp $(X,x)$ is an lci cusp  if and only if
$\sum_{i=1}^r(d_i-2)\le 4$, which is equivalent to the fact that  the dual cusp has
resolution cycle of length at most $4$.
From \cite[\S 4]{NW},  there is no natural epimorphism $\pi_1(\Sigma)\to G$, hence no natural Galois cover with transformation group $G$.  But in \cite[Proposition 4.1 (2)]{NW}, Neumann and Wahl  constructed a finite  cover 
$(\widetilde{X}, x)$ of $(X,x)$ with transformation group $G^\prime$ so that $(\widetilde{X}, x)$ is  a hypersurface cusp, which is  l.c.i. 

In \cite[Theorem 1.3]{Jiang_cusp}, we generalize Looijenga conjecture to the equivariant setting and prove that for any cusp singularity $(X,x)$ admitting a one-parameter smoothing, there exists an lci smoothing lifting of the singularity. 
\begin{thm}\label{thm_smoothing_lci_cusp_paper}(\cite[Theorem 1.3]{Jiang_cusp})
     Suppose that the cusp singularity $(X,x)$ admits a smoothing $f: (\sX,x)\to \Delta$.  Then there exists a smoothing $\tilde{f}: (\tilde{\sX},x)\to \Delta$ of an lci cusp together endowed with  a finite group $G$ action such that the quotient induces the smoothing $f: (\sX,x)\to \Delta$.
\end{thm}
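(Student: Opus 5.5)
The plan is to deduce the equivariant statement from the non-equivariant Looijenga criterion (Theorem \ref{thm_Looijenga_conjecture}), applied to a suitably chosen lci cusp. By Neumann--Wahl \cite[Proposition 4.1]{NW}, $(X,x)$ has a finite cover $(\widetilde{X},x)\to (X,x)$ by a hypersurface cusp with transformation group $G'$. We want more freedom, so we will work with the whole family of cusp covers. Write $\pi_1(\Sigma)=\zz^2\rtimes_A\zz$. Any finite-index sublattice $\Lambda\subset\zz^2$ that is $A$-invariant gives a finite cover of the link, and this cover is again a $T^2$-bundle over $S^1$ whose monodromy is $A$ restricted to $\Lambda$. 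Such a cover is the link of a cusp $(\widetilde{X},x)$ that covers $(X,x)$ with group $G=\zz^2/\Lambda$. We may also pass to an $n$-fold cyclic cover in the base direction, which replaces $A$ by $A^n$.

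The first step is to translate both conditions into combinatorics. A cusp is lci exactly when the dual cycle has length at most $4$. By Theorem \ref{thm_Looijenga_conjecture}, a cusp is smoothable exactly when its dual cycle $D'$ is an anticanonical cycle on a rational surface. Taking the base-direction cover $A\mapsto A^n$ replaces the dual cycle by its $n$-fold repetition. Changing the lattice $\Lambda$ changes the cycle by toric blow-ups and blow-downs. The goal is then to find $\Lambda$ and $n$ such that the dual cycle $\widetilde{D}'$ of $\widetilde{X}$ has length at most $4$ and still embeds anticanonically in a rational surface.

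The second step builds that rational surface from the given smoothing. Since $(X,x)$ is smoothable, Theorem \ref{thm_Looijenga_conjecture} gives a Looijenga pair $(Y,D')$. Following Gross--Hacking--Keel \cite{GHK15} and Engel \cite{Engel}, a smoothing of the cusp corresponds to a family of Looijenga pairs over the mirror, namely a type III degeneration. Taking the covering that matches $\Lambda$ on the toric model of $(Y,D')$, we expect to obtain a $G$-equivariant Looijenga pair $(\widetilde{Y},\widetilde{D}')$. This pair should carry a $G$-action with $\widetilde{Y}/G$ birational to $Y$ compatibly with the boundary. From it we construct a $G$-equivariant smoothing $\tilde f$ of the lci cusp $\widetilde{X}$, using equivariant versions of the GHK mirror family or of Engel's construction.

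The third step checks that $\tilde f/G$ recovers the given smoothing $f$. The invariant subring of the mirror family, or equivalently the descent of the equivariant type III degeneration, gives a smoothing of $X$. We need to match it with $f$ rather than with some other component of the deformation space. For this we use that smoothing components of a cusp correspond to deformation types of the pairs $(Y,D')$. We then choose the equivariant lift inside the component determined by $f$, and if necessary first base change $\Delta$ by $t\mapsto t^m$.

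The main obstacle is the second step. We must produce a rational surface with a $G$-action lifting the given $(Y,D')$ and realizing the shorter, length at most $4$, cycle $\widetilde{D}'$ anticanonically. The first attempt will be to take a toric model of $(Y,D')$, so that $Y$ is a toric surface blown up at points on the boundary. We then pull back along the isogeny of tori induced by $\Lambda$, which is $G$-equivariant on the toric part. Finally we lift the non-toric blow-up points as full $G$-orbits, and check that the resulting boundary cycle has the predicted self-intersections and is negative enough to be dual to an lci cusp.
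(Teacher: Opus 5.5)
The gap is Step 2. It carries the whole content of the theorem, and you only say you ``expect'' it. Moreover, the construction you sketch cannot work as designed, because it never imposes the constraint that the given $f$ places on $G$.

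\textbf{The constraint from $f$.} Since $G$ acts freely on $\widetilde{X}\setminus\{x\}$, the fixed locus of each $g\neq 1$ in $\tilde{\sX}$ meets the central fibre only at $x$. So on nearby fibres $g$ has at most isolated fixed points. A stabilizer containing no pseudo-reflections gives a singular point of the quotient, so smoothness of $\sX_t=\tilde{\sX}_t/G$ forces $G$ to act freely on $\tilde{\sX}_t$. Hence the Milnor fibre $\widetilde F$ of $\tilde f$ is a connected unramified $G$-cover of the Milnor fibre $F$ of $f$, restricting to $\widetilde\Sigma\to\Sigma$ on the boundary. In other words, the epimorphism $\pi_1(\Sigma)\to G$ defined by $\Lambda$ must factor through $\pi_1(F)$.

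For the Looijenga pair $(Y,D')$ attached to $f$ one has $F\simeq Y\setminus D'$. Suppose the toric model $Y\to\bar Y$ has its non-toric blow-ups on the components $\bar D_i$, $i\in I$, and let $v_i\in N$ be the primitive generator of the ray of $\bar D_i$. Then $\pi_1(Y\setminus D')\cong N/\langle v_i : i\in I\rangle$, because each exceptional curve minus a point kills the meridian $v_i$.

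\textbf{Where the isogeny pull-back fails.} Write $N'\subset N$ for the sublattice corresponding to $\Lambda$, and $\bar Y_{N'}$ for the toric surface of the same fan over $N'$. The cover $\bar Y_{N'}\to\bar Y$ is ramified along $\bar D_i$ with index $e_i$, the order of $v_i$ in $N/N'$.

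If $e_i>1$ on a blown-up component, the points over the blown-up point have inertia $\mu_{e_i}$, acting as a reflection normal to the boundary. After you blow up the orbit, this inertia has an isolated fixed point on each new exceptional curve away from $\widetilde D'$. So $\widetilde Y/G$ has $A_{e_i-1}$ points in the interior; requiring only ``birational to $Y$'' is far too weak. What descends is then a deformation of $X$ to singular surfaces, not $f$.

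If all $e_i=1$, then $G$ is a quotient of $N/\langle v_i\rangle_{i\in I}$. This group is already trivial as soon as two adjacent components carry blow-ups.

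\textbf{Consequence.} You cannot choose $\Lambda$ from the link in Step 1 and match it with $f$ afterwards in Step 3. $G$ has to be extracted from $\pi_1(F)$, and you would then still need some such $G$ to shorten the dual cycle to length at most $4$. That can fail. Take the toric del Pezzo surface of degree $6$ and blow it up at one general point on each side of its boundary hexagon, plus one more point on one side. Then $D'$ becomes a negative definite anticanonical cycle with self-intersections $(-3,-2,-2,-2,-2,-2)$. Its dual cusp is a nodal rational curve of self-intersection $-6$, with $\sum(d_i-2)=6>4$, so it is smoothable and not lci. Yet this smoothing has simply connected Milnor fibre, so it admits only $G=1$.

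This is the cusp analogue of the degree $5,6,7$ simple elliptic obstruction. Consistently with it, the paper's introduction sets aside cusps that cannot be lifted to an equivariant smoothing of an lci cusp (see the hypothesis of Theorem \ref{thm_lci_family} and the discussion after Theorem \ref{thm_lci_cover_DM_intro}). The remark after the statement also indicates that the cited proof works through Inoue--Hirzebruch surfaces, not through toric models of $(Y,D')$.

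Two smaller slips. An $A$-invariant $\Lambda$ gives a Galois cover with group $\zz^2/\Lambda$ only when $(A-I)\zz^2\subseteq\Lambda$. And covers in the base direction replace $D'$ by its $n$-fold repetition, which lengthens the dual cycle rather than shortening it.
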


\begin{rmk}
   In the proof of   \cite[Theorem 1.3]{Jiang_cusp}, we ahve to use Inoue-Hirzebruch surfaces, which are complex analytic surfaces and not algebraic.    
\end{rmk}

Similar to the simple elliptic singularity case, the index one covering DM stack $\pi: \XX\to X$ may contain the quotient stack $[(X,x)/\mu_2]$, where $(X,x)$ is a cusp. The quotient $(X,x)/\mu_2$ of $(X,x))$ is a  rational singularity.  So in this case the link $\Sigma$ of $(X,x)/\mu_2$ is a rational homology sphere so that its first homology group is a finite abelian group.   We still use the universal abelian cover of Newmann-Wahl to construct the lci cover. 

\begin{thm}\label{thm_minimally_elliptic_universal2}(\cite[Theorem 5.1]{Jiang_2022})
If $(X,x)$ is a $\zz_2$ quotient of  a cusp singularity,   then there exists the universal abelian cover 
$(\widetilde{X},x)$ with transformation group $G$.  Moreover, 
the $G$-equivariant  deformations of $(\widetilde{X}, x)$ gives $\qq$-Gorenstein deformations of $(X,x)$.
\end{thm}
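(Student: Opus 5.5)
The plan is to follow the strategy used for the $\zz_2,\zz_3,\zz_4,\zz_6$ quotients of simple elliptic singularities. Let $(X,x)$ be the $\zz_2$ quotient of a cusp $(Z,z)$. The first step is to show that $(X,x)$ is a rational singularity whose link $\Sigma$ is a rational homology sphere. The involution acts on the resolution cycle $D$ of $(Z,z)$ and reverses its orientation, so it has two fixed points on the cycle. After resolving the quotient, the dual graph of $(X,x)$ is a chain of rational curves with two $(-2,-2)$ forks at its ends, i.e. a star-shaped or dihedral-type tree. Because the graph is a tree of rational curves, $H_1(\Sigma,\zz)$ is finite, equal to the discriminant group of the intersection matrix. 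Hence the universal abelian cover in the sense of Neumann--Wahl exists: it is the unramified-over-$X\setminus x$ cover determined by $\pi_1(\Sigma)\to H_1(\Sigma,\zz)=:G$, and $(\widetilde{X},x)\to (X,x)$ is Galois with group $G$.

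The second step is to identify $(\widetilde{X},x)$ as an lci, indeed a splice-quotient or hypersurface cusp-type, singularity. The index one cover $Z\to X$ is an abelian $\zz_2$-cover unramified off $x$, so it factors through the universal abelian cover. Thus $\widetilde{X}\to Z$ is an abelian cover of the cusp $Z$, and $\widetilde{X}$ is again a cusp, being a finite cover of a cusp unramified off the point. Following Neumann--Wahl \cite[Proposition 4.1]{NW} and the computation of $H_1$ for the cusp link $\zz\oplus G$, I would check that the induced cover of $Z$ is the one killing the torsion part. For the dihedral-type graph, the Neumann--Wahl semigroup condition holds, so the universal abelian cover is a complete intersection of splice type. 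Concretely, I would compute the discriminant group and verify that the resulting cusp has $\sum(d_i-2)\le 4$. This is the main obstacle: it requires controlling the self-intersection sequence of the covering cusp, through the conjugacy class of the monodromy matrix $A^k$ after passing to the cover. My first attempt would be to compute this directly from the continued-fraction description of $A$.

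The third step handles deformations. The map $\widetilde{X}\to X$ is étale off $x$ and $\widetilde{X}$ is Cohen--Macaulay of depth $\ge 2$. By the standard argument of \cite[\S 5.4]{Kollar-Shepherd-Barron}, a $G$-equivariant deformation $\widetilde{\sX}/T$ has quotient $\widetilde{\sX}/G$ flat over $T$. Moreover $\omega^{[r]}_{\sX/T}$ is invertible, because the canonical sheaf of the Gorenstein $\widetilde{\sX}$ descends with $\mu_2$-character through $Z$. Hence the quotient deformation is $\qq$-Gorenstein, with the index one cover recovered as the intermediate quotient $\widetilde{\sX}/\ker(G\to\zz_2)$. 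Conversely, a $\qq$-Gorenstein deformation lifts to the index one cover. Over the punctured family the cover extends uniquely as the $G$-torsor determined by the link, so taking reflexive hulls of the eigensheaves gives the inverse construction.
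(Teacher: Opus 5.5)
Your Step 1 together with the forward half of Step 3 already proves the statement, and it follows the paper's sketch. The $\zz_2$-quotient of a cusp is rational, with a tree of rational curves as resolution graph. Hence its link $\Sigma$ is a rational homology sphere, $G=H_1(\Sigma,\zz)$ is finite, and the Neumann--Wahl universal abelian cover exists. The index one cover $Z\to X$ comes from a character $\pi_1(\Sigma)\to\zz_2$, which factors through $G$, so $\widetilde{X}\to X$ factors through $Z$. A $G$-equivariant deformation $\widetilde{\sX}/T$ then yields, after taking invariants under $H=\ker(G\to\zz_2)$ (exact and compatible with base change in characteristic $0$), a $\mu_2$-equivariant deformation of $Z$. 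By definition this is a $\qq$-Gorenstein deformation of $X$. Two other parts of your write-up should be dropped.

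Step 2 is not needed, because the statement makes no lci claim; what you call the main obstacle is not part of this theorem. As sketched it is also not sound. Write $\Sigma_Z$ for the link of the cusp $Z$. There is no canonical cover of $Z$ ``killing the torsion'' of $H_1(\Sigma_Z,\zz)=\zz\oplus G_Z$; this is exactly Neumann--Wahl's remark that there is no natural epimorphism $\pi_1(\Sigma_Z)\to G_Z$. The induced cover $\widetilde{X}\to Z$ is the one attached to $[\pi_1(\Sigma),\pi_1(\Sigma)]\subset\pi_1(\Sigma_Z)$. Moreover, the semigroup condition is a condition on the graph. It does not by itself make the universal abelian cover of a given analytic germ a splice-type complete intersection; that needs analytic input, which for quotient-cusps is a separate theorem of Neumann--Wahl.

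The converse at the end of Step 3 is false in general, and your argument for it breaks down. Extending the $G$-torsor off $x$ and taking $j_*$ of the eigensheaves gives sheaves that need not be flat over $T$ or commute with base change. The $\qq$-Gorenstein condition controls only the eigensheaves $\omega^{[i]}_{\sX/T}$ coming from the $\mu_2$-quotient, not the other rank one reflexive sheaves indexed by characters of $G$. The same reasoning would show that every $\qq$-Gorenstein deformation of a $T$-singularity $\frac{1}{dn^2}(1,dna-1)$ lifts to an equivariant deformation of its universal abelian cover $\cc^2$. But $\cc^2$ is rigid, while $T$-singularities have $\qq$-Gorenstein smoothings. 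This matches the paper's caveat that not every smoothing lifts to an lci cover; the statement asserts only the forward direction.
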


\subsubsection{Degenerate cusp singularities}

Let $(X,x)$ be a degenerate cusp singularity, which is a non-normal surface singularity such that if 
$$\pi: \widetilde{X}\to X$$
is the normalization, then each component of $\widetilde{X}$ is a cyclic quotient singularity which is a rational singularity. 
The quotient $(X,x)/\mu_2$ is also slc, and the stack $[(X,x)/\mu_2]$ can be a chart of the index one covering DM stack $\XX\to X$. It is known that any degenerate cusp singularity admits a smoothing. 

For these singularities, we use the universal abelian cover of Newmann-Wahl \cite{NW} to construct the lci cover. 

\begin{thm}\label{thm_minimally_elliptic_universal}(\cite[Theorem 5.8]{Jiang_2022})
Let $(X,x)$ be  a degenerate cusp singularity  germ. Then there is an lci cover 
$(\widetilde{X},x)$ of  $(X,x)$ with transformation group $G$.    
The $G$-equivariant  deformations of $(\widetilde{X}, x)$ induce Gorenstein deformations of  $(X,x)$.
\end{thm}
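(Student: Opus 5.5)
We must prove: for a degenerate cusp germ $(X,x)$, there is an lci cover $(\widetilde{X},x)$ with transformation group $G$, and $G$-equivariant deformations of $(\widetilde{X},x)$ induce Gorenstein deformations of $(X,x)$.

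The plan is to follow the Neumann--Wahl strategy used for the $\mu_r$-quotients in Theorem \ref{thm_minimally_elliptic_universal2}, adapted to the non-normal setting by working on the semi-resolution. Let $f:Y\to X$ be a minimal semi-resolution with exceptional cycle $D=\cup_i D_i$ of smooth rational curves (or a rational nodal curve). Each component of the normalization of $Y$ near $D$ carries cyclic quotient singularities, so the normalization $\widetilde{X}^\nu\to X$ is a disjoint union of cyclic quotient (hence rational) germs glued along the double curve.

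\textbf{Step 1: the link and its abelian quotient.} The link $\Sigma$ of $(X,x)$ is obtained by gluing the lens-space links of the normalized components along the solid tori coming from the double curve. Its fundamental group is therefore an amalgam of finite cyclic groups together with a loop, essentially a degeneration of the $T^2$-bundle over $S^1$ in the cusp case. I will compute $H_1(\Sigma,\zz)=\zz^{\epsilon}\oplus G$ with $G$ finite. Exactly as for cusps, I will not use a natural epimorphism. Instead I will choose a finite abelian quotient $G$ such that the induced cover over each normalized component is the universal abelian cover of that cyclic quotient, namely $\cc^2$ or a smooth branch. Gluing these pieces compatibly along the double curves gives a germ $(\widetilde{X},x)$ whose normalization is smooth and whose double locus is normal crossing. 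Equivalently, $\widetilde{X}$ is a degenerate cusp with smooth components, of the form $\{xyz=0\}$ or a cycle of planes.

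\textbf{Step 2: the lci property.} I will choose $G$ so that the cover's cycle of rational curves has length at most $4$, or more concretely so that $\widetilde{X}$ is a hypersurface or complete intersection such as $xyz=0$ or $\{xy=0,zw=0\}$. This mirrors the Neumann--Wahl criterion $\sum(d_i-2)\le 4$ for cusps and \cite[Proposition 4.1(2)]{NW}. The needed choice is to make the intermediate cover's dual cycle short, by pulling back along a multiplication-by-$m$ map on the cycle's monodromy. The $G$-action is then explicit and diagonal on coordinates, and it is free off $x$ and off the double curve, matching the étale local structure of $X$.

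\textbf{Step 3: equivariant deformations.} For a $G$-equivariant deformation $\widetilde{\sX}/T$ of $\widetilde{X}$, set $\sX=\widetilde{\sX}/G$. Flatness follows because taking invariants is exact in characteristic $0$. Since $\widetilde{X}$ is Gorenstein and lci, $\omega_{\widetilde{\sX}/T}$ is invertible, so the remaining point is the Gorenstein property of the quotient. I will check that $G$ acts on a generator of $\omega_{\widetilde{X}}$ trivially; for the toric-type action this reduces to the weights summing to $0$ modulo $|G|$, which can be arranged because $X$ itself is Gorenstein. The invariant generator then descends and makes $\omega_{\sX/T}$ invertible, so the induced deformation is Gorenstein.

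The main obstacle is Step 2. One must produce an abelian quotient $G$ of $\pi_1(\Sigma)$, rather than the canonical $H_1$-torsion, that simultaneously restricts correctly on all normalized components and yields an lci germ. My first attempt will be to realize $X$ torically, as a cycle of toric affine surfaces glued along boundary curves, and take $G$ to be a finite subgroup of the torus. This makes both the lci verification and the Gorenstein weight condition of Step 3 explicit.
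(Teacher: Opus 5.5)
There is a genuine gap: Steps 1 and 2 ask for covers that do not exist for general degenerate cusps.

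\textbf{Step 1 fails.} Write the normalization as a cycle of toric germs $X_j=\cc^2/\mu_{n_j}$ ($j\in\zz/k$), with boundary branches $\ell_j,\ell_j'$ glued by $\ell_j'\sim\ell_{j+1}$. The link of each branch is a core circle of the lens space $L_j$, and its class \emph{generates} $\pi_1(L_j)=\mu_{n_j}$. The gluing identifies these circles, so van Kampen identifies a generator of $\mu_{n_j}$ with a generator of $\mu_{n_{j+1}}$ inside $\pi_1(\Sigma)$. Hence all local groups map onto one cyclic subgroup $C$ of order dividing $\gcd_j n_j$, and $\pi_1(\Sigma)\cong C\rtimes\zz$, where $\zz$ comes from the loop around the cycle. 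A quotient $G$ of $\pi_1(\Sigma)$ can therefore be injective on every $\mu_{n_j}$ only if all the $n_j$ coincide. So in general you cannot choose $G$ so that each component's cover is $\cc^2$.

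\textbf{Step 2 goes the wrong way.} The only remaining freedom is a finite quotient of the $\zz$ factor. Such covers unwind the cycle, multiplying its length (and $-D^2$) by $m$, so your ``multiplication-by-$m$ on the monodromy'' lengthens the cycle rather than shortening it. For cusps, Neumann--Wahl shorten the dual cycle by passing to a sublattice of the fibre lattice $\zz^2\subset\pi_1(\Sigma)$. The degenerate link has no such lattice, so the analogy Step 2 relies on breaks down.

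\textbf{A decisive example.} Take the cone over a cycle of five lines spanning $\pp^4$, that is, $X=\bigcup_{i\in\zz/5}\{x_j=0 \text{ for all } j\neq i,i+1\}\subset\cc^5$. It is cut out by the five quadrics $x_ix_j$ with $j\neq i\pm1$, so it is a degenerate cusp of embedding dimension $5$ and not lci.
\begin{itemize}
\item Here every $n_j=1$ and $\pi_1(\Sigma)=\zz$, so the connected covers \'etale off $x$ are exactly the cones over $5m$-cycles of lines. None of these is lci.
\item More simply, any finite surjection $\widetilde X\to X$ forces $\widetilde X$ to have at least five irreducible components. A cycle of planes is lci only when its length is at most $4$, so the target of Step 2 cannot cover $X$ at all.
\item Allowing ramification along the double curve does not rescue this. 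The component count still applies, and the descent of $\omega$ in Step 3 needs the cover to be \'etale in codimension one.
\end{itemize}
So your construction cannot produce the lci cover, and for such $X$ no cover \'etale off $x$ can. A proof would need a mechanism other than abelian covers of the link. The survey itself only points to the Neumann--Wahl universal abelian cover, which is infinite here because of the $\zz$ summand in $H_1(\Sigma,\zz)$.

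\textbf{A smaller point in Step 3.} On a cycle of planes the generator of $\omega$ is the log form $\frac{dx_i}{x_i}\wedge\frac{dx_{i+1}}{x_{i+1}}$. This form is invariant under every diagonal action, so ``weights summing to $0$ modulo $|G|$'' is not the relevant criterion.
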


\subsection{Moduli stack of lci covers}

Recall that a simple elliptic singularity $(X,x)$ of degree $6,7$ does not admits an lci smoothing lifting.  For such singularities, let
$(\sX,x)\to (\Delta,0)$ be a  $\qq$-Gorenstein family.  Then we take crepant resolution to get a one-parameter flat family   $\widetilde{\pi}: (\widetilde{\sX}, 0)\to (\Delta,0)$ of $\lci$ surface  such that all the fibers of the index one covering DM stack of the morphism $\widetilde{\pi}: (\widetilde{\sX}, 0)\to (\Delta,0)$ have lci singularities and we call the central fiber $\widetilde{\sX}_0$ a fake lci covering DM stack.

Two  smoothing families of lci covering DM stacks using crepant resolutions are related by flops.  We borrow a definition from K-semistable moduli space and  define $\mathbb{S}$-equivalent relations.
Two  fake lci covering DM stacks $\XX^{\lci}_1$ and $\XX^{\lci}_2$ are called $\mathbb{S}$-equivalent if a flat  family $\XX^{\circ, \lci}\to T\setminus\{0\}$ over $T\setminus\{0\}$ can be extended to families over $T$ by adding $\XX^{\lci}_1$ and $\XX^{\lci}_2$.  

We summarize the one-parameter smoothing  families in 
Theorem \ref{thm_minimally_elliptic_universal2}, Theorem \ref{thm_smoothing_lci_cusp_paper}, Theorem \ref{thm_elliptic_singularity_lci_lifting}, and 
 the above analysis 
with  the following result.

\begin{thm}\label{thm_one_parameter_family_lci}
All the one-parameter smoothing and deformation families
$f: \sX\to \Delta$
of s.l.c. surfaces  can be obtained by smoothing and deformation families $f^{\lci}: \XX^{\lci}\to \Delta$ of fake $\lci$ covering DM stacks. 
\end{thm}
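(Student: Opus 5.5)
The proof is local-to-global. Let $f:\sX\to\Delta$ be a one-parameter $\qq$-Gorenstein family of slc surfaces and $\XX\to\Delta$ its index one covering DM stack (Theorem \ref{thm_index_one_DM_Moduli_intro}). By the classification in \S\ref{subsec_slc_classification}, the central fiber $\XX_0$ is lci away from finitely many points $p_1,\dots,p_n$. Each $p_s$ is a germ $([Z/\mu_r],p_s)$ with $r\in\{1,2,3,4,6\}$, where $Z$ is a simple elliptic singularity, a cusp, or a degenerate cusp. Away from these points no modification is needed: the family is already lci, and its Gorenstein deformations are governed by $\Ext^1,\Ext^2$ only. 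So it suffices to construct, over a small analytic or étale neighborhood of each $p_s$ in the total space, a family of (fake) lci covering stacks whose quotient is the restriction of $f$. These local pieces are then glued to the identity over the punctured neighborhoods, where the covers are étale and canonical.

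Near each bad point I use the earlier results, case by case.
\begin{enumerate}
\item Simple elliptic of degree $d\neq 5,6,7$ with $r=1$: Theorem \ref{thm_elliptic_singularity_lci_lifting} gives a $\zz_d$-equivariant lci smoothing lifting by a simple elliptic singularity of degree $\le 4$. Its $\zz_d$-quotient recovers the local family.
\item $\mu_r$-quotients of simple elliptic singularities, and $\mu_2$-quotients of cusps: Theorem \ref{thm_minimally_elliptic_universal2} gives the Neumann--Wahl universal abelian cover $\widetilde X$ with group $G$. Its $G$-equivariant deformations induce all $\qq$-Gorenstein deformations.
\item Cusps admitting a smoothing: Theorem \ref{thm_smoothing_lci_cusp_paper} gives a $G$-equivariant smoothing of an lci cusp inducing $f$.
\item Degenerate cusps: Theorem \ref{thm_minimally_elliptic_universal} gives an lci cover whose $G$-equivariant deformations induce Gorenstein deformations.
\end{enumerate}
In each of these cases, the local lci family is the stack quotient $[\widetilde{\sX}/G]\to\Delta$. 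For deformations that are not smoothings, the same theorems, phrased in terms of $G$-equivariant deformations, give the lift.

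The remaining cases are simple elliptic singularities of degree $5,6,7$ and the non-liftable cusps. Here I would take the one-parameter total space $(\sX,x)$, which is a canonical threefold singularity, and choose a crepant (partial) resolution $\widetilde\pi:\widetilde\sX\to\sX$ over $\Delta$. The resulting family has lci fibers after passing to index one covers, and its central fiber is by definition a fake lci covering stack. The generic fiber is unchanged, because the resolution is an isomorphism away from $x$. The family therefore still induces $f$ up to the $\mathbb S$-equivalence introduced above. Different choices of crepant resolution differ by flops, which is why the statement is only up to $\mathbb S$-equivalence.

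The main obstacle is this fake case. I must show that a crepant resolution exists with all singular points of the new central fiber lci. I would use the fact that a terminal or canonical threefold singularity admits a $\qq$-factorial terminalization that is crepant. The central fiber is then a Cartier divisor in a threefold whose index one cover has only cDV (hypersurface) singularities, hence is lci. A second subtlety is gluing the local $G$-covers into a global DM stack. Since the groups act freely off $p_s$, the quotient stacks are isomorphic to $\XX$ there, so the gluing is canonical.
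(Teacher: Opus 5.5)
Your proposal is correct and is essentially the paper's argument. The paper obtains the theorem by assembling the local lifting results: Theorem \ref{thm_elliptic_singularity_lci_lifting} for simple elliptic singularities with $d\neq 5,6,7$, Theorem \ref{thm_smoothing_lci_cusp_paper} for cusps, and the Neumann--Wahl covers for the cyclic quotients and degenerate cusps. In the remaining simple elliptic cases it takes a crepant resolution of the canonical threefold total space, which gives fake lci covers determined up to flops and hence up to $\mathbb{S}$-equivalence. Your gluing over the punctured neighborhoods and your use of a crepant $\qq$-factorial terminalization are details the paper leaves implicit; the terminalization works because its index-one cover is cDV, so the Cartier central fiber is lci. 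These additions do not change the route.
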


We then define the flat families over any base scheme $T$ by base change. 

\begin{defn}\label{defn_family_lci}
We define the flat  families over a scheme $T$ in the following diagram
\begin{equation}\label{eqn_diagram_ind_lci_family}
\xymatrix{
\XX^{\lci}\ar[rr]^{\hat{\pi}}\ar[dr]^{\pi^{\lci}}\ar[dd]_{f^{\lci}}&&\XX\ar[dl]_{\pi}\ar[ddl]^{f}\\
&\sX\ar[d]_-{\overline{f}}&\\
T\ar[r]^{\eta}&T^\prime&
}
\end{equation} 
where 
\begin{enumerate}
\item  $\overline{f}: \sX\to T^\prime$ is a $\qq$-Gorenstein deformation  family of s.l.c. surfaces;
\item $f: \XX\to T^\prime$ is the corresponding index one covering DM stack;
\item $f^{\lci}: \XX^{\lci}\to T$ is the lifting  fake $\lci$ covering DM stack of $\overline{f}$, such that the morphism $\pi^{\lci}: \XX^{\lci}\to \sS$ factors through the morphism 
$\pi: \XX\to \sX$.  The morphism $\eta: T\to T^\prime$ is proper;
\item whenever there is a one-parameter family $\sX\to \Delta^\prime$ such that $\Delta^\prime\to T^\prime$, then up to finite cover $\Delta\to \Delta^\prime$, we have an lci lifting $\XX^{\lci}\to \Delta$ such that $\Delta\subset T$;
\item the isomorphic classes   $\{\overline{f}: \sX\to T^\prime\}$  of the  families   must satisfy the 
conditions in (\ref{eqn_condition_functor_M}).
\end{enumerate}
\end{defn}

Then we have

\begin{thm}\label{thm_universal_covering_stack} (\cite[Theorem 5.37]{Jiang_2022})
The functor $\sM_N^{\lci}$ represents a DM stack $M_N^{\lci}:=\overline{M}^{\lci}_{K^2,\chi,N}$.  Moreover,  there exists a proper morphism 
$$f^{\lci}: M_N^{\lci}\to M_N.$$ 

In particular,  if $N$ is large divisible enough, the stack $M^{\lci}:=M_N^{\lci}$ is a proper DM stack with projective coarse moduli space. 
The morphism $f^{\lci}$ in the above diagram induces a proper  morphism on their coarse moduli spaces.
\end{thm}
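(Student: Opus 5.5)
The plan is to prove the statement in three stages: representability, existence of the proper morphism $f^{\lci}$, and properness for $N$ large divisible. Everything will be anchored on the already-established structure of $M_N\cong M_N^{\ind}$ (Theorem \ref{thm_index_one_DM_Moduli_intro}).

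\textbf{Stage 1: representability.} I will check that $\sM_N^{\lci}$ is a stack in the étale topology. Families of lci covering DM stacks are flat and projective over the coarse family $\sX$, and their relative dualizing sheaves are invertible, so descent of objects and morphisms follows from descent for polarized flat families. The polarization is the pullback of $\omega_{\sX/T}^{[N]}$, which is ample relative to $T$ on the coarse space.

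Next I will show that the diagonal is representable, finite and unramified. Automorphisms of a fiber $\XX^{\lci}_t$ over the identity of $\sX_t$ are controlled by the finite deck groups $G$ of the local covers. These are the Neumann–Wahl universal abelian covers, the degree $d$ cover of simple elliptic singularities, and the $G'$ of Theorem \ref{thm_smoothing_lci_cusp_paper}. The automorphisms of $\sX_t$ themselves form a finite group, since $\sX_t$ is stable. Hence all automorphism groups are finite, and their infinitesimal automorphisms $\Ext^0(\ll_{\XX^{\lci}_t},\sO)$ vanish because $\omega$ is ample.

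To get an atlas, I will work locally over an étale chart $U\to M_N$. There I will construct a scheme $T$ parametrizing the liftings of condition (4) of Definition \ref{defn_family_lci}, using the local existence of lifts from Theorem \ref{thm_lci_family} and Theorem \ref{thm_one_parameter_family_lci}. Since each non-lci singular point of a fiber has only finitely many lci lifting data, $T\to U$ is quasi-finite. The lci covers then give a finite-type DM stack by Artin's criteria, with deformation and obstruction theory given by $\Ext^1$ and $\Ext^2$ of the cotangent complex.

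\textbf{Stage 2: the morphism $f^{\lci}$.} Sending $\XX^{\lci}\to T$ to its coarse space $\sX\to T'$, pulled back along $\eta$, gives the morphism $f^{\lci}$. This is well defined because $\pi^{\lci}$ factors through the index one cover $\XX$ (Definition \ref{defn_family_lci}(3)), and $\sX$ is the coarse space of both $\XX$ and $\XX^{\lci}$ (Theorem \ref{thm_lci_family}). Quasi-finiteness of $f^{\lci}$ follows from Stage 1.

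\textbf{Stage 3: properness.} I will verify the valuative criterion for $f^{\lci}$ directly. Take a DVR family $\sX\to\Delta$ in $M_N$ together with a lift $\XX^{\lci,\circ}$ over the punctured disk. Theorem \ref{thm_one_parameter_family_lci} provides, after a finite base change $\Delta\to\Delta'$, an extension $\XX^{\lci}\to\Delta$; the fake lci covers via crepant resolution handle the degree $5,6,7$ elliptic points and the bad cusps. Uniqueness of the extension holds only up to the $\mathbb{S}$-equivalence built into the functor, and this gives separatedness. Properness of $f^{\lci}$ together with properness of $M_N$ for $N$ large divisible then yields properness of $M_N^{\lci}$. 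Its coarse space maps finitely to the projective coarse space of $M_N$, so it is projective, and $f^{\lci}$ induces a proper map on coarse spaces.

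I expect the main obstacle to be the existence of the extension in Stage 3 at the fake lci points. There, different crepant resolutions differ by flops, so one must show two things. First, the central fibers obtained are all $\mathbb{S}$-equivalent, so that the stack stays separated. Second, the extension is compatible with the index one cover $\mu_r$-action. For the second point I would first try working $\mu_r$-equivariantly on the index one cover and taking a $\mu_r$-invariant crepant resolution of the threefold canonical singularity of the total space.
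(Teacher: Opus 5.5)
The survey gives no proof of this theorem; it is quoted from Theorem 5.37 of the earlier paper. Your proposal therefore has to stand on its own, and it has a genuine gap. Stages 1 and 3 both rest on quasi-finiteness of the lifting data, which is not available and conflicts with the objects you yourself use in Stage 3.

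The functor only requires $\eta\colon T\to T'$ to be \emph{proper}. At the degree $5,6,7$ simple elliptic points and the bad cusps, the objects are fake lci covers, i.e.\ central fibers of crepant resolutions of the total spaces of one-parameter smoothings. Such a central fiber is not a finite cover of $\sX_0$. Rational surfaces (for instance a del Pezzo surface glued along $E$) are attached to a resolution; which surfaces occur depends on the smoothing arc, and they are determined only up to flops. So over the bad locus you should expect $f^{\lci}$ to have positive-dimensional fibers, as for the exceptional divisor of a blow-up. This is the picture behind the paper's conjecture relating $M_N^{\lci}$, $M_N^{\Bub}$ and blow-ups of $M_N$.

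This breaks two of your steps:
\begin{enumerate}
\item The argument ``finitely many lifting data, so $T\to U$ is quasi-finite'' does not produce an atlas. You must actually construct, over an \'etale chart $U\to M_N$, a finite-type scheme proper over $U$ that carries a flat family of lifts. In effect this is a simultaneous crepant resolution after base change, and its non-separatedness coming from flops is what $\mathbb{S}$-equivalence is meant to remove.
\item The last step of Stage 3 fails. A proper morphism to a projective scheme does not make the source projective. You need a relatively ample line bundle for $f^{\lci}$, or some other explicit projective construction.
\end{enumerate}

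Two auxiliary steps break for the same reason. In Stage 2 you justify $f^{\lci}$ by ``$\sX$ is the coarse space of $\XX^{\lci}$'', citing Theorem \ref{thm_lci_family}. That theorem explicitly excludes the degree $5,6,7$ elliptic singularities and the bad cusps. For a crepant model, the coarse space is the central fiber of the resolution, which maps onto $\sX_0$ by contracting the exceptional components. The morphism must instead be defined from the datum $\overline{f}\colon\sX\to T'$ in Definition \ref{defn_family_lci}, or equivalently by passing to the relative canonical model.

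In Stage 1 you kill infinitesimal automorphisms because $\omega$ is ample. On a crepant model, $\omega$ is the pullback of $\omega_{\sX_0}$ and hence trivial on the exceptional components. Moreover, after the finite base changes allowed by Definition \ref{defn_family_lci}(4), chains of elliptic ruled components carrying $\cc^*$-actions can appear. Finiteness of automorphisms therefore has to come from a minimality or $\mathbb{S}$-equivalence normalization, not from ampleness.
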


\section{Bubble compactification of KSBA spaces}\label{sec_bubble_KSBA}

In this section we review the bubble tree compactification of KSBA moduli space of general type surfaces.  This is motivated from the collapsing of K\"ahler-Einstein metrics on general type surfaces. 

\subsection{Bubble surfaces}\label{subsec_bubble_surfaces}

We survey the construction of bubble tree surfaces, and bubble tree index one covering DM stacks. The idea is to replace the simple elliptic singularity, cusp singularity and degenerate cusp singularity  in an slc surface $X$ by simple normal crossing rational surfaces gluing along their anticanonical divisors. 
This is similar to the stable curves in $\overline{M}_g$, where the boundary curves are nodal curves containing components of rational details.  Thus, it is reasonable to take the bubble tree surfaces as 
the 2-dimensional analogue of stable curves. 

We will survey the basic idea of the construction for the three types of singularities--the simple elliptic singularity, the cusp singularity and the degenerate cusp singularity in an slc surface $X$. 

Suppose that there is a sequence $(X_j, \omega_j, p_j)$ of K\"ahler-Einstein metrics which converges to a projective surface $(X_{\infty}, p_{\infty})$ with log-canonical singularities. 
It is expected that the K\"ahler-Einstein metrics will collapse around the log-canonical singularity $p_{\infty}$.  
The Gromov-Hausdorff limit of the sequence
$(X_j, \omega_j, p_j)$ is the complement of the singularity 
$p_{\infty}$ in $X_{\infty}$. 
Thus, our construction of the bubble tree surfaces should be compatible with the one-parameter smoothing of these singularities. 

\subsubsection{Simple elliptic singularities}

Let $(X,x)$ be a simple elliptic singularity, and $f: \sX\to \Delta$ be a one-parameter smoothing such that $(\sX_0,x)=(X,x)$.
Then the degree $d$ of $(X,x)$ satisfies $1\le d\le 9$.

For the simple elliptic singularities $(\sX_0,x)$,  we
replace the singularities $x$ by bubble tree of rational surfaces. We explain the construction here. 
Let $(\widetilde{X}, E)\to (X,x)$ be the minimal resolution such that $E$ is a smooth elliptic curve such that $E^2=-d$.   Let $(X_i, E_i)$ for $i=1,2, \cdot, m$ be a finite number of rational surfaces so that $X_i$ are rational fibrations over smooth elliptic curve $E$ for $1\le i\le m-1$; and 
$(X_m, E_m)$ be the smooth del Pezzo surface  with degree $d$ and the smooth elliptic curve $E_m\in |-K_{X_m}|$.

We let  
\begin{equation}\label{eqn_simple_E}
    X^{\Bub}:=\widetilde{X}\bigcup_{E}\bigcup_{i=1}^m X_{i}.
\end{equation}
where the gluing is along the elliptic curves $E$.  The open affine surface $Y\setminus E$ is  the bubble limit of the rescaling of  K\"ahler-Einstein metrics; see \cite{FHJ}. 
 This is similar to the type II degeneration of K3 surfaces, although type II degeneration of K3 surfaces  is a chain of rational surfaces.

If the simple elliptic singularity $(X,x)$ admits a cyclic group $\mu_r$ (for $r=2,,3,4,6$) action,  it induces an action  on the chain of  rational surfaces.  
When we construct the index one cover DM stacks, this is taken as the quotient stack of the chain of rational surfaces. 

\subsubsection{Cusp singularities}

Let $(X,x)$ be a cusp singularity, and let $f: (\sX,x)\to (\Delta,0)$ be a smoothing so that $(\sX_0,x)=(X.x)$.

For this cusp singularity $(\sX_0,x)$, we use Looijenga's conjecture (proved in \cite{GHK15}, \cite{Engel}) to replace the singularity by   a type III degeneration of log Calabi–Yau surfaces. Therefore, the mirror symmetry properties between smoothing cusps and Looijenga pairs associated to dual cusp cycles (which is Looijenga's conjecture) play a key role in the construction.

Since $(X,x)$ admits a smoothing, there is a Looijenga pair $(Y,D^\prime)$ associated with the dual cycle of the dual cusp. 
P. Engel employed an approach involving Type III degeneration of Looijenga pairs, analogous to the Type III degeneration used for K3 surfaces. From the pair $(Y, D^\prime)$, he constructed a Type III degeneration of Looijenga pairs, which is a normal crossing variety
\begin{equation}\label{eqn_XX_0}
    \mathscr{X}_0=\bigcup_{i=0}^{m-1}V_i
\end{equation}
where $(V_0, D, D^\prime)$ is
the Inoue surface with the resolution cycle $D, D^\prime$ of the two dual cusps; and for $i>0$, $V_i$ are rational surfaces such that $(V_i, D_i)$ is a Looijenga pair. Here we have 
$$
D=D_0=\bigcup_{i}D_{0i},  
$$
where $D_{0i}=V_0\cap V_i$, and for $i>0$, 
$D_i=\cup D_{ij}$, $D_{ij}=V_i\cap V_j$ are double curves. 
Moreover, the dual complex $\Gamma(\mathscr{X}_0)$ is a triangulation of sphere.  For each triangle, 
the 
triple point formula   
\begin{equation}\label{eqn_triple_point}
d^{\prime}_{ij}:=
\begin{cases}
-D^{\prime 2}_{ij}, & \ell(D^{\prime}_i)\ge 2;\\
2-D^{\prime 2}_{ij}, & \ell(D^{\prime}_i)=1
\end{cases}
\end{equation}
holds, where  $d^{\prime}_{ij}$ is the negative self-intersection of $D^\prime_{ij}$.

Let 
$$
X^{\Bub}:=X\bigcup_{D} \left(\mathscr{X}_0\setminus V_0\right)
$$
where the gluing is along the neighborhood of $D\subset X$ and $D\subset \mathscr{X}_0$.

 The sequence of K\"ahler–Einstein metrics collapses at the cusp singularity. In \cite[Corollary 1.11]{DFS}, the authors proved that any complete K\"ahler–Einstein metric near the cusp singularity 
$x$ is asymptotically isometric to $\hh\times \hh/\Gamma$, where 
$\hh$
is the upper half-plane and 
$\Gamma$ is a discrete parabolic subgroup of $\Aut(\hh\times\hh)$. The space 
$\hh\times \hh/\Gamma$ is a non-compact manifold with a cusp end. We hope  that the type III degeneration is the bubble limits of K\"ahler–Einstein metrics.

\begin{prop}\label{prop_smoothing_X_E}
    The bubble surface $X^{\Bub}$ in both the case of simple elliptic singualrity and the cusp singularity case  admits a smoothing so that it induces the smoothing of the corresponding singularities.  
\end{prop}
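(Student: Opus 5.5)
The plan is to realize $X^{\Bub}$ as the central fiber of a semistable, or at least $d$-semistable normal crossing, degeneration obtained from a given smoothing $f:\sX\to\Delta$ of the singularity. After a base change $t\mapsto t^k$ we would perform a weighted blow-up (simple elliptic case) or a toroidal modification (cusp case) at $x$. This replaces the singular point by a normal crossing configuration of rational surfaces glued along anticanonical cycles. Since everything away from $x$ is unchanged, the problem is local: we work with the germ $(\sX,x)\to(\Delta,0)$ and glue back to the trivial deformation of $X\setminus\{x\}$ (more precisely, the given family outside a neighbourhood of $x$).

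\emph{Simple elliptic case.} Let $(X,x)$ have degree $d$ with $1\le d\le 9$. By Pinkham's theory of negative weight smoothings, the one-parameter smoothing is obtained by sweeping out the cone over $(E_m,\sO(1))$ inside the cone over the del Pezzo surface $X_m$ of degree $d$. Concretely, the family is the affine cone over $X_m$ with hyperplane sections degenerating to $E_m$. Blowing up the vertex of the total space, i.e.\ the weighted blow-up at $x$ of $\sX$, then gives a central fiber $\widetilde{X}\cup_E X_m$, where $\widetilde X$ is the minimal resolution with $E^2=-d$ and $E=E_m\subset X_m$ has $E_m^2=d$. The triple point (Friedman $d$-semistability) condition $N_{E/\widetilde X}\otimes N_{E/X_m}\cong\sO_E$ holds because $\sO_E(-E)|_{\widetilde X}$ and $\sO_{X_m}(E_m)|_{E}$ are the same degree $d$ bundle coming from the cone. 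After a further base change, the intermediate ruled surfaces $X_1,\dots,X_{m-1}$ over $E$ appear by blowing up $E$ in the total space; each step preserves the normal crossing structure and the triple point formula. The total space is smooth or lci away from a finite set, so the general fiber is the smoothing of $X$ and the family is a smoothing of $X^{\Bub}$ inducing $f$ by contracting the bubbles. This uses the existence of smoothings for $1\le d\le9$ recalled before Theorem~\ref{thm_elliptic_singularity_lci_lifting}.

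\emph{Cusp case.} By Theorem~\ref{thm_Looijenga_conjecture}, the existence of a smoothing gives a Looijenga pair $(Y,D')$. Engel's Type III degeneration $\mathscr X_0=\bigcup V_i$ of \eqref{eqn_XX_0} is $d$-semistable, since the triple point formula \eqref{eqn_triple_point} holds and the dual complex is a triangulated sphere. It therefore admits a smoothing $\mathscr X\to\Delta$ by Friedman's theorem, in the analytic category, with general fiber an Inoue--Hirzebruch-type surface, and contracting $D'$ in the family recovers the cusp smoothing. We glue $X\setminus U$, for $U$ a neighbourhood of $x$, to $\mathscr X_0\setminus V_0$ along the punctured neighbourhood of $D$, which is the common link. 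We then check that the local $d$-semistability data along $D$ match; this holds because $N_{D_{0i}/V_0}\otimes N_{D_{0i}/V_i}$ is trivial on $\mathscr X_0$, and $V_0$ near $D$ is the resolution of $(X,x)$. Applying Friedman's smoothing theorem to the glued normal crossing surface (or gluing the local smoothings, with vanishing of the relevant $H^2$ obstruction ensured by the general type global part) then gives the smoothing of $X^{\Bub}$.

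\emph{Main obstacle.} The main obstacle is this gluing step: making the local analytic smoothing of $\mathscr X_0\setminus V_0$ compatible with a global smoothing of $X$. This needs the global $d$-semistability of $X^{\Bub}$ and unobstructedness of its log deformations. My first attempt would be to verify $\mathcal{E}xt^1(\Omega_{X^{\Bub}},\sO)\cong\sO_{\mathrm{Sing}}$ from the triple point formulas, and then use the Kawamata--Namikawa log smoothing theorem in its analytic version.
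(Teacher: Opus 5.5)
There is a genuine gap in the cusp case. It sits at exactly the step that makes the smoothing induce a smoothing of the singularity.

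In Engel's smoothing $\mathscr{X}\to\Delta$ of $\mathscr{X}_0=\bigcup_i V_i$, the general fibre is the rational surface of the Looijenga pair $(Y,D')$, not an Inoue--Hirzebruch-type surface. The Inoue surface $(V_0,D,D')$ appears only as a component of the central fibre. The cycle $D'$ lies in $V_0$ away from the double locus and survives in every fibre as the anticanonical cycle of $(Y,D')$. So contracting $D'$ in the family just produces the dual cusp $x'$ equisingularly in all fibres, and smooths nothing.

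The paper's argument uses the other contraction. By Shepherd-Barron, the union of the components $V_i$, $i>0$, which meets $V_0$ along $D$, can be contracted to a point in the total space $\mathscr{X}$. After this contraction the general fibre is unchanged, while the central fibre becomes $V_0$ with $D$ collapsed, i.e.\ it acquires the cusp with resolution cycle $D$. That is the cusp smoothing induced by the smoothing of the bubble. Your proposal never contracts the $V_i$, so nothing in it connects the smoothing of the bubble to a smoothing of $(X,x)$.

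The gluing step you put in its place does not go through as proposed.
\begin{itemize}
\item The triple point formula (\ref{eqn_triple_point}) only says that $\mathcal{E}xt^1(\Omega_{\mathscr{X}_0},\sO_{\mathscr{X}_0})$ has degree zero on each double curve. $d$-semistability needs this line bundle to be trivial on the whole double locus, and a cycle of rational curves carries a $\cc^*$ of multidegree-zero line bundles. So triviality has to come from Engel's construction. His Theorem 5.2 is also what supplies the smoothing, not Friedman's theorem.
\item More seriously, Friedman's theorem and the Kawamata--Namikawa theorem both require a trivial (log) dualizing sheaf. This fails once a component is the resolution $\widetilde X$ of a general type surface.
\item For such surfaces the $H^2$-type obstruction groups are in general nonzero, so the general type part does not \emph{ensure} their vanishing.
\item Gluing Engel's local family to the given family on $X\setminus U$ would also require the two families to agree on the overlap, and nothing guarantees this.
\end{itemize}
The paper avoids all of this by working with Engel's family itself together with the contraction above.

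In the simple elliptic case, your weighted blow-up is a concrete version of the paper's assertion that the elliptic ruled surfaces and the del Pezzo surface contract back to the smoothing. Note, however, that Pinkham's description covers negative-weight smoothings, not an arbitrary given $f$. For a general $f$, the surface extracted after base change can be a del Pezzo surface with Du Val singularities, and the total space can remain singular. So in that case you are choosing the smoothing rather than modifying the given one.
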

\begin{proof}
In the case of simple elliptic singularity case, 
$X^{\Bub}$ admits a smoothing 
$$
f^{\Bub}: \mathscr{X}^{\Bub}\to \Delta
$$
such that 
the rational surface  components in $\mathscr{X}_0$ are elliptic ruled surfaces and a smooth del Pezzo surface of degree $d$.  They contract to the smoothing of the original simple elliptic singularity. 

 In the cusp singularity case,    \cite[Theorem 5.2]{Engel} proved that $\mathscr{X}_0$ admits a smoothing $f: \mathscr{X}\to \Delta$ which gives the smoothing of the cusp $D$. The generic fiber of $f$ is the Looijenga pair $(Y,D^\prime)$.
More precisely, the smoothing of the cusp $D$ is given by the smoothing $f: \mathscr{X}\to \Delta$ as follows.  From \cite{Shepherd-Barron}, all the rational surfaces $V_i$ for $i>0$ in the central fiber 
$f^{-1}(0)=\mathscr{X}_0$ are contractible which gives the cusp $D$.
Therefore, all of the rational components $V_i$ for $i>0$ contract to give the smoothing of the original cusp. 
\end{proof}

\subsubsection{Degenerate cusp singularities}

Let $(X,x)$ be a degenerate cusp singularity.  It always admits a smoothing
$$
f: (\sX,x)\to(\Delta,0)
$$
such that $(\sX_0,x)=(X,x)$.
In this case  we use crepant resolution to form bubble surfaces. Look at the diagram
\[
\xymatrix{
(\widetilde{\sX},x)\ar[rr]^{\pi}\ar[dr]_{\tilde{f}}&&(\sX,x)\ar[dl]^{f}\\
&(\Delta,0)&
}
\]
where $\pi: (\widetilde{\sX},x)\to (\sX,x)$ is a crepant resolution. 
The central fiber of $\tilde{f}$ is an open Kulikov surface whose mirror symmetry property   was used in \cite{AAB2024} 
to construct KSBA families of  log Calabi-Yau surfaces.  The open Kulikov surface is attached to the neighborhood of the degenerate cusp singularity to form bubble surfaces. 
We hope that this open Kulikov surface is the bubbling limit structure  in \cite{Sun}.

\subsection{Bubble compactification of KSBA moduli spaces}\label{subsec_moduli_Bubble}

Based on the above construction we construct the bubble compactification of KSBA moduli space of general type surfaces. 

Let $f^{\Bub}: \mathscr{X}^{\Bub}\to \Delta$ be a one-parameter flat family of bubble tree surfaces in above section. We first take the index one covering DM stack
$$
f^{\Bub}: \XX^{\Bub}\to \Delta
$$
of the family  $f^{\Bub}: \mathscr{X}^{\Bub}\to \Delta$ so that each fiber of this family of index one covering DM stacks has only lci singularities. 
We then  define the flat family of bubble tree index one covering DM stacks by base change. 

\begin{defn}\label{defn_flat_family}
    A flat family $f: \XX^{\Bub}\to T$ of bubble-tree index one covering  DM stacks over an arbitrary scheme $T$ is a flat family over $T$ such that whenever there is a DVR $R$ we have the following Cartesian diagram
    \[
    \xymatrix{
    \XX\ar[r]\ar[d]& \XX^{\Bub}\ar[d]\\
    \spec(R)\ar[r]& T
    }
    \]
    where $f: \XX\to \spec(R)$ is the one-parameter flat family of bubble-tree index one covering DM stacks. We call the family $f: \XX^{\Bub}\to T$ a canonical family of index one cover bubble-tree DM stacks. 
\end{defn}

Let us introduce the flat families of bubble-tree index one covering DM stacks in the moduli functor. 
\begin{defn}\label{defn_moduli_functor}
    Let $T$ be a scheme, we define the following diagram of flat families of geometric objects:
    \begin{equation}\label{eqn_diagram}
        \xymatrix{
        \XX^{\Bub}_{\min}\ar@{-->}[r]^{\varphi}\ar[d]_{f}& \XX\ar[d]^{f^\prime}\ar[r]& \sX\ar[dl]^{f_{\sX}}\\
        T\ar[r]^{\phi}& \overline{T} &
        }
    \end{equation}
    where 
    \begin{enumerate}
        \item $f^\prime: \XX\to \overline{T}$ is a flat family of index one covering DM stacks, and $f_{\sX}: \sX\to \overline{T}$ is the corresponding flat $\qq$-Gorenstein deformation family of slc stable surfaces. 
        \item $\{f: \XX^{\Bub}_{\min}\to T\}$ is  the isomorphism  class of  flat families of canonical index one cover bubble-tree DM stacks. 
        \item $\phi: T\to \overline{T}$ is a proper map.
        \item the dash arrow $\varphi:  \XX^{\Bub}\dasharrow \XX$ means that we do not require that there is a direct morphism, 
        but induces a flat family. 
        \item if two flat families $f_1: \XX_1^{\Bub}\to T$ and $f_2: \XX_2^{\Bub}\to T$ differ only on the bubbling rational surfaces, then they are related by a flop.
    \end{enumerate}
\end{defn}

    Let $\Sch_{\cc}$ be the category of schemes over $\cc$.  We define the functor 
    $$
    M^{\Bub}_{N}:=\overline{M}^{\Bub}_{K^2, \chi, N}: \Sch_{\cc}\to \text{Groupoids}
    $$
    such that 
    $$
    T\mapsto \{f: \XX^{\Bub}_{\min}\to T\}
    $$
    where  $\{f: \XX^{\Bub}_{\min}\to T\}$ is the isomorphism class of flat families of canonical bubble-tree index one cover DM stacks. 
    Moreover, the coarse moduli space $f_{\sX}: \sX\to \overline{T}$ satisfies the numerical condition the KSBA moduli space $M_N$ in (\ref{eqn_condition_functor_M}) and Conditions (1)-(5) in \S 2.1. 

\begin{thm}\label{thm_moduli_stack}(\cite[Theorem 5.12]{Jiang_Bubble})
    The moduli functor $M_N^{\Bub}$ is represented by a DM stack. Moreover, there exists a proper morphism $f: M_N^{\Bub}\to M_N$ from the moduli stack of bubble tree DM stacks to the KSBA moduli space. 

    If $N$ is sufficiently large divisible enough, $M^{\Bub}:=M_N^{\Bub}$ and $M:=M_N$ are proper DM stacks with projective coarse moduli spaces. 
\end{thm}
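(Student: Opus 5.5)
The proof follows the pattern already used for $M_N^{\lci}$ in Theorem \ref{thm_universal_covering_stack}: first show $M_N^{\Bub}$ is an algebraic stack with finite inertia, then build the forgetful morphism to $M_N$ and prove it is proper via the valuative criterion.

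\emph{Step 1: the functor is a stack with finite stabilizers.} Descent in the fppf (or \'etale) topology holds because each family $f:\XX^{\Bub}_{\min}\to T$ is a flat projective family of Gorenstein DM stacks. It is polarized by $\omega_{\XX^{\Bub}/T}^{\otimes N}$, suitably twisted on the bubble components so that it is relatively ample, and descent of polarized families is standard. For the DM property we need $\Aut$ of each fiber to be finite and unramified, i.e. $\Hom(\Omega_{\XX^{\Bub}},\sO)=0$. On the main component this follows from general type. On the bubble components (elliptic ruled surfaces and del Pezzo surfaces in the simple elliptic case, the $V_i$ of Engel's Type III degeneration in the cusp case, open Kulikov pieces in the degenerate cusp case) vector fields must preserve the double locus. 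We rule out continuous automorphisms by the \emph{minimality} built into the subscript ``$\min$'': bubbles carrying a $\Gm$ or $\mathbb{G}_a$ action that fixes the gluing curve are contracted, as for unstable rational components of prestable curves. This minimality is also what makes $S$-equivalence classes well defined.

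\emph{Step 2: algebraicity and finite type.} We realize $M_N^{\Bub}$ as a quotient $[H/PGL]$ of a locally closed subscheme of a Hilbert scheme of the coarse spaces $\sX^{\Bub}$, embedded by a fixed power of the polarization. Boundedness follows from three facts: $M_N$ is of finite type; over each slc surface only finitely many singularities get bubbled; and the bubble data form bounded families (del Pezzo surfaces of degree $d\le 9$, chains of bounded length, and Type III degenerations with fixed dual cusp cycle, which are finite up to flops by Engel's classification). The index one covering stack structure is then recovered canonically, as in Theorem \ref{thm_index_one_DM_Moduli_intro}.

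\emph{Step 3: the morphism and its properness.} The morphism $f$ sends $\XX^{\Bub}_{\min}\to T$ to the coarse slc family $\sX\to\overline{T}$ of diagram (\ref{eqn_diagram}). Concretely, we contract all bubble components, which are contractible by Shepherd-Barron's criterion as in Proposition \ref{prop_smoothing_X_E}, and take relative $\mathrm{Proj}$ of $\bigoplus_k f_*\omega^{[kN]}$. Compatibility with base change comes from the cohomology-and-base-change condition in (\ref{eqn_condition_functor_M}). For properness we use the valuative criterion. Take a DVR $R$ with fraction field $K$, a family over $\spec K$, and an extension of its image to $\spec R$ in $M_N$. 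After a finite base change, Theorem \ref{thm_smoothing_bubble_intro} lifts the central slc fiber to a bubble-tree central fiber smoothing to the given generic fiber. This gives existence. Uniqueness holds exactly modulo flops, which is absorbed by the $S$-equivalence relation and condition (5) of Definition \ref{defn_moduli_functor}; this gives separatedness. When $N$ is sufficiently divisible, $M_N$ is proper, so $M_N^{\Bub}$ is proper by composition. Its coarse space is projective because the proper coarse map to the projective coarse space of $M_N$ can be shown quasi-finite away from the flop locus, where we would pull back the CM line bundle and add a relatively ample correction.

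\emph{Main obstacle.} The hardest step is uniqueness in the valuative criterion, that is, controlling exactly which different bubble-tree extensions arise. I would first show that any two extensions are connected by a sequence of the three types of flops along rational curves meeting the double locus. The argument is to run a relative MMP over $\sX_R$ for the total spaces $\mathscr{X}^{\Bub}_R$, which are $K$-trivial over the contraction, and apply Kawamata's theorem that crepant birational models are connected by flops. The cusp case is analytic rather than algebraic, since Inoue--Hirzebruch surfaces enter. There I would work with Engel's algebraic Type III degenerations only, avoiding $V_0$ in the glued surface, so that MMP arguments still apply.
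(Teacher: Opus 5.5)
The survey does not prove this statement; it only quotes \cite[Theorem 5.12]{Jiang_Bubble}. Its set-up fits your skeleton: families lying over $M_N$ through a proper $\phi\colon T\to\overline{T}$ in Definition \ref{defn_moduli_functor}, $S$-equivalence, and Theorem \ref{thm_smoothing_bubble_intro}. But two steps of your argument fail as written.

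\emph{The polarization in Steps 1--2.} Every bubble is log Calabi--Yau, glued along its anticanonical boundary. For the del Pezzo end, $\omega_{X^{\Bub}}|_{X_m}\cong\omega_{X_m}(E_m)\cong\sO_{X_m}$, and the same holds for the elliptic ruled surfaces, the Looijenga pairs $(V_i,D_i)$, and the crepant exceptional surfaces over degenerate cusps. So $\omega^{\otimes N}$ is trivial on all bubbles, and your ``suitable twist'' is extra data. It cannot be chosen compatibly with condition (5) of Definition \ref{defn_moduli_functor}. A flop, for example an Atiyah flop moving a $(-1)$-curve that meets $E$ once from the del Pezzo bubble into the main component, turns a line bundle positive on the flopping curve into one negative on the flopped curve. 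Hence a functorial twist is relatively ample on at most one member of an $S$-equivalence class. This leaves two options. Either $[H/PGL]$ contains all polarizable bubble trees, and then it is the non-separated stack of the Remark after Theorem \ref{thm_moduli_Bub_intro}, with $S$-equivalent objects appearing as distinct, non-isomorphic points. Or you must name a canonical twist and prove that every class, in every family, contains a bubble tree on which it is ample; that would make $S$-equivalence unnecessary. Your Step 3 does neither. It gets separatedness by ``absorbing flops into $S$-equivalence'', but $S$-equivalence was never built into the stack of Step 2, and passing from the non-separated stack to $S$-equivalence classes is not a formal operation.

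\emph{The fibres of $f$.} They are positive-dimensional along the whole bubbling locus, not only along flops. Over a surface with a simple elliptic point of degree $d$, different smoothing directions give different bubbles $\widetilde{X}\cup_E S$. Here $S$ is any degree-$d$ del Pezzo surface containing $E$ anticanonically with $N_{E/S}\cong N_{E/\widetilde{X}}^{\vee}$ (d-semistability). This is a $(9-d)$-dimensional family: choose the cubic embedding of $E$ and $9-d$ points on it with prescribed sum. It is positive-dimensional for $d\le 8$, which is why the paper conjectures that $M_N^{\Bub}$ is a blow-up of $M_N$. Every class pulled back from $M_N$ is trivial on these fibres, in particular $f^*L_{\CM}$, which is also the CM bundle of the bubble-tree family because that family is crepant over the slc family. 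So ``pull back $L_{\CM}$ and add a relatively ample correction'' is the entire content of the projectivity claim, and you have not supplied it.

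The same fibres break your properness argument. Theorem \ref{thm_smoothing_bubble_intro} says that a bubble tree built from $\sX_0$ admits a smoothing. The existence half of the valuative criterion needs the converse: a given family over $\spec R$ acquires, after finite base change, a bubble-tree model with the given generic fibre. This is the analogue of Theorem \ref{thm_one_parameter_family_lci}. You must also handle DVRs whose generic fibre already carries a bubble; there a family of del Pezzo or Looijenga-pair bubbles over $\spec K$ has to be extended. Properness of $f$ means exactly that these families of bubbles are compact. Their degenerations (bubbles acquiring $(-2)$-curves, further bubbling) must therefore be objects of the moduli problem, and nothing in your Step 3 controls them.
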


\section{Moduli space of stable maps}\label{sec_moduli_stable_maps}

In this section we discuss the possibility to have a virtual fundamental class on the moduli space of stable maps from slc surfaces to projective varieties. 
This idea will lead to define Gromov-Witten like invariants for counting surfaces in projective varieties. 

In general it is very hard to define a perfect obstruction theory on the moduli space of stable map space in the surface setting.  We only list some results and the details will be published in the future. 

\subsection{Kontsevich moduli space}

Let us first review the Kontsevich moduli space of stable maps from algebraic curves to projective varieties, and the perfect obstruction theory. 

 Let $W$ be a smooth projective variety or DM stack and  let $\overline{M}_{g,n}(W,\beta)$ be the moduli space or stack of stable maps in from $n$-marked curves to $W$.  Think about one point
$((C,p_1,\cdots,p_n)\stackrel{g}{\rightarrow}W)$ in $\overline{M}_{g,n}(W,\beta)$, which is a stable map from marked curves $C$ to $W$.  We have the following exact sequence of the deformation and obstruction spaces of the map $g$:
\begin{equation}\label{eqn_long_sequence_GW}
\begin{array}{cccccc}
&0& \rightarrow & T_g^0& \rightarrow & \Aut(C,p_1,\cdots,p_n)\\
\rightarrow& \Hom(g^*\Omega_W,\sO_C)& \rightarrow & T_g^1& \rightarrow & T^1(C,p_1,\cdots,p_n)\\
\rightarrow& \Ext^1(g^*\Omega_W,\sO_C)& \rightarrow & T_g^2& \rightarrow & T^2(C,p_1,\cdots,p_n)\\
\rightarrow &0
\end{array}
\end{equation}
where $T^1_g$ classifies the deformations of the map $g$, and $T^2_g$ classifies the obstructions.  Since $(C,p_1,\cdots, p_n)$ is a nodal marked curve, its obstruction $T^2(C,p_1,\cdots,p_n)=0$ vanishes. 
If the map $g$ is stable, then $T_g^0=0$.  In \cite{LT}, \cite{BF}, Behrend-Fantechi, Li-Tian studied the deformation and obstruction theory of the map $g$, and defined the perfect tangent-obstruction theory complex on the moduli space 
$\overline{M}_{g,n}(W,\beta)$.   Let $f:  \overline{M}_{g,n}(W,\beta)\to \mathfrak{M}_{g,n}$ be the morphism to the stack $\mathfrak{M}_{g,n}$ of pre-stable marked curves obtained by forgetting the map.  The stack 
$\mathfrak{M}_{g,n}$ is a smooth algebraic stack of dimension $3g-3+n$.  Look at the following diagram
$$
\xymatrix{
\sC\ar[r]^{g}\ar[d]_{\pi}& W\\
\overline{M}_{g,n}(W,\beta)
}
$$
where $\sC$ is the universal curve and $g$ is the universal map.   Behrend \cite{Behrend2} constructed a relative perfect obstruction theory 
\begin{equation}\label{eqn_relative_POT}
\phi: R^*\pi_*(g^*T_W)^{\vee}\to \ll^{\bullet}_{\overline{M}_{g,n}(W,\beta)/\mathfrak{M}_{g,n}}
\end{equation}
with the deformation and obstruction spaces are $\Hom(g^*\Omega_W,\sO_C)$ and  
$\Ext^1(g^*\Omega_W,\sO_C)$ respectively on a map $g$. 
These two perfect obstruction theories on $\overline{M}_{g,n}(W,\beta)$  give the same virtual fundamental class $[\overline{M}_{g,n}(W,\beta)]^{\vir}\in H_{2\vd}(\overline{M}_{g,n}(W,\beta))$, where $\vd$ is the virtual dimension.

\subsection{Moduli space of stable maps after V. Alexeev}

The moduli space of stable maps from slc surface pairs to projective varieties was constructed by V. Alexeev in \cite{Alexeev}. We reivew it here. 

Recall that an  slc log surface pair  $(X,D)$ is called stable if its $K_X+D$ is ample.  Let $W$ be a smooth projective scheme.   From \cite{Alexeev}, a map  $g: (X,D)\to W$ is a stable map if the following conditions are satisfied:
\begin{enumerate}
\item $D=\sum_{i=1}^{n}a_i D_i$ is a $\qq$-divisor of $X$;
\item the pair $(X,D)$ is s.l.c.;
\item the divisor $K_X+D$ is relatively $g$-ample;
\item let $H:=g^*\sO_W(1)$.  There exists an $N>0$ such that the sheaf $L_N:=\sO(N(K_X+D+5H))$ is a line bundle. 
\end{enumerate}
Let $T$ be a finite type scheme over $\mathbf{k}$.  A flat family $(\sX, \sD)/T\stackrel{g}{\rightarrow} W$ of stable maps over $T$ is given by 
flat $\qq$-Gorenstein deformation family $(\sX,\sD)\to T$ of s.l.c. log surface pairs such that $g$ is stable map on each fiber $t\in T$; and there exists an $N>0$ such that the sheaf $\sL_N:=\sO(N(K_{\sX/T}+\sD+5H))$ is a line bundle.

Fixing $K^2\in \qq$, and $I\subset [0,1]$ a finite set of coefficients of $D$ closed under addition, and $0< A, B \in\qq$. 
We define the moduli stack 
$$M:=\overline{M}_{K^2, A, B}(W)$$ 
to be the moduli functor  from the category $\Sch_{\mathbf{k}}$ of schemes over $\mathbf{k}$ to groupoids, such that  
$M(T)$ is the $\qq$-Gorenstein deformation families $\{(\sX,\sD)/T\to W\}$ of  stable maps from  s.l.c. log surface pairs to $W$
which satisfy the conditions:  for any $t\in T$, and any stable map $(\sX_t, \sD_t\stackrel{g}{\rightarrow} W)$, $(K_{\sX_t+\sD_t})^2=K^2$,  $(K_{\sX_t+\sD_t})\cdot H=A$ and 
$H^2=B$.  In addition, for sufficient large $r>0$, the line bundle $T\mapsto \det f_*\left(\sO_{\sX}(rK_{\sX/T}+\sD+5H)\right)$ associated with each family extends to a functorial line bundle on the entire 
moduli functor, where $f: (\sX,\sD)\to T$ is the family over $T$. 

\begin{thm}\label{thm_moduli_maps_intro}(\cite[Theorem]{Alexeev})
For the fixed invariants $K^2, I, A, B$ above, the moduli functor $M$ is represented by a  projective DM stack $M$ over $\mathbf{k}$.
\end{thm}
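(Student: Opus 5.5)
We prove that the functor $M=\overline{M}_{K^2,A,B}(W)$ is represented by a projective DM stack, in the standard three steps for KSBA-type moduli: boundedness, representability by a DM stack of finite type, and then properness together with projectivity of the coarse space.

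\emph{Boundedness.} Consider a stable map $g:(X,D)\to W$ with the prescribed invariants. The divisor $K_X+D$ is $g$-ample, and $H=g^*\sO_W(1)$ is nef. So $K_X+D+5H$ is ample; this is the usual cone-theorem bound, since extremal rays have length at most $4$ in dimension two. Its self-intersection is fixed:
\[
(K_X+D+5H)^2=K^2+10A+25B .
\]
Since the coefficient set $I$ is finite and satisfies DCC, boundedness of stable slc pairs with fixed volume (Alexeev's theorem) gives a uniform $N$ such that $L_N=\sO(N(K_X+D+5H))$ is very ample with vanishing higher cohomology. Hence all pairs embed in a fixed $\pp^M$, with bounded Hilbert polynomials for $X$ and for each $D_i$. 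The graph of $g$ is then a closed subscheme of $\pp^M\times W$ with bounded Hilbert polynomial.

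\emph{Representability.} We take the locally closed subscheme $\mathcal{H}$ of the product of Hilbert schemes that parametrizes the triples $(X,\{D_i\},\Gamma_g)$. On it we impose the following conditions:
\begin{enumerate}
\item $\Gamma_g$ is the graph of a morphism;
\item the family is a $\qq$-Gorenstein slc family, in the sense of Koll\'ar's condition that the reflexive powers commute with base change;
\item the embedding is given by $|L_N|$.
\end{enumerate}
Each of these is locally closed: the first by openness of isomorphism onto the base, the second by Koll\'ar's hull and husk theorem, the third by the standard argument. Then $M=[\mathcal{H}/PGL_{M+1}]$. The stack is DM because automorphism groups are finite: a reductive positive-dimensional subgroup fixing $g$ and preserving $K_X+D$ would contradict the ampleness of $K_X+D$ relative to $g$. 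The diagonal is finite by separatedness, which we establish next.

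\emph{Properness and projectivity.} Separatedness and the existence part of the valuative criterion come from the log MMP. Given a family over the punctured disk, we apply semistable reduction and take the relative log canonical model of $(\sX,\sD+\sX_0)$ over $W\times\Delta$. This model is unique, which gives separatedness, and its central fiber is slc by inversion of adjunction, which gives existence. Projectivity of the coarse space follows from Koll\'ar's ampleness lemma applied to the functorial line bundle $\det f_*\sO(r(K_{\sX/T}+\sD+5H))$ that is built into the definition of $M$.

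The main obstacle will be the properness step. Running the relative log MMP over $W\times\Delta$ requires log abundance in relative dimension two for slc pairs, and we need the limit to keep the coefficient and invariant conditions. The plan is to reduce to the normal case by normalizing, using Koll\'ar's gluing theory, and to invoke surface abundance there.
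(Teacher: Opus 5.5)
The paper gives no argument for this statement; it is quoted from \cite{Alexeev}. Your outline has the same architecture as Alexeev's proof: boundedness of log surfaces, a Hilbert-scheme quotient, the valuative criteria via the threefold log MMP, and projectivity via Koll\'ar's method. The step that does not go through as written is projectivity, because Koll\'ar's ampleness lemma has two hypotheses you never check.

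\emph{Nefness.} You need $f_*\sO(r(K_{\sX/T}+\sD+5H))$ to be nef for every family over a complete curve $T$. This is a semipositivity theorem for slc families twisted by the nef bundle $g^*\sO_W(1)$. Functoriality of the determinant only lets it descend to the coarse space; it says nothing about positivity.

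\emph{Finiteness of the classifying map.} The classifying map to $\mathrm{Gr}/G$ must have finite fibres. With the data $\mathrm{Sym}^d f_*\sO(L)\to f_*\sO(dL)$, where $L=r(K+D+5H)$, it does not. Post-composing a fixed embedding $g\colon X\hookrightarrow W=\pp^n$ with elements of $PGL_{n+1}$ gives a positive-dimensional family of stable maps along which $(X,D,L)$ is constant. So the classifying map is constant on a dense subset of the closure of this family in $M$, and the lemma yields nothing there. You have to use data that remember $g$ and $D$. For example, take the maps $\mathrm{Sym}^d f_*\sO(L)\otimes H^0(W,\sO_W(d))\to f_*\sO(d(L+H))$ cutting out $\Gamma_g\subset\pp^M\times W$, plus the analogous quotients for the $D_i$, with the structure group acting on the first factor only. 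Then finiteness follows from finiteness of automorphisms, and nefness still reduces to that of $f_*\sO(L)$.

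There are also several smaller points.
\begin{itemize}
\item \emph{Boundedness.} $K_X+D+5H$ is not of the form $K_X+\Delta$ for a boundary, so Alexeev's boundedness theorem does not apply verbatim. Replace $5H$ by $\tfrac12(H_1+\cdots+H_{10})$ with $H_j\in|H|$ general, which is possible since $|H|$ is base point free. This gives a stable slc pair with coefficients in the finite set $I\cup\{\tfrac12\}$ and volume $K^2+10A+25B$.
\item \emph{Adjunction versus inversion of adjunction.} In the valuative criteria you have these the wrong way round. Separatedness needs every family in $M(\Delta)$ to have $(\sX,\sD+\sX_0)$ log canonical, so that it is the relative log canonical model over $W\times\Delta$; that is where inversion of adjunction enters. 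Existence needs adjunction, together with the separate fact that the central fibre of such an lc threefold pair is $S_2$.
\item \emph{Existence of the log canonical model.} The relative log canonical model over $W\times\Delta$ comes from the threefold log MMP plus log abundance for threefold pairs. Normalizing the generic fibre does not lower the dimension of the total space, so surface abundance alone does not supply it.
\item \emph{Finite automorphisms.} This has nothing to do with reductivity. A connected group of automorphisms $\sigma$ with $g\circ\sigma=g$ preserves $D$ and the double curve. It therefore acts on the general fibres of $g$ on each component. These are finite sets, stable log curves, or (for contracted components) stable log surfaces, so the group acts trivially.
\end{itemize}
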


Our strategy for the enumerative geometry on the moduli space $M$ of stable maps is to consider the lci covering DM stack pairs, since it is possible that the KSBA moduli space of lci log surface pairs may admits a virtual fundamental class.

Thus, we extend the construction of the moduli stack $M$ by considering  the log DM stack pairs $(\XX, \DD)$ such that its coarse moduli space 
$(X, D)$ is an slc log surface of general type.  Examples contain index one covering DM stacks $(\XX, \DD)$, and the $\lci$ covering DM stacks
$(\XX^{\lci}, \DD^{\lci})$. 
We give an explanation here. 
An slc surface $X$ may contain certain simple elliptic and cusp singularities with embedded dimension  very high, then the higher  obstruction spaces for deforming the singularities do not vanish \cite{Jiang_2021}. 
The  $\lci$ covering DM stack $\XX^{\lci}$ of an s.l.c. surface $X$ was defined in \cite{Jiang_2022} and only has l.c.i. singularities. Thus the higher obstruction spaces vanish for  the $\lci$ covering DM stack $\XX^{\lci}$.  The log version of $\lci$ covering DM stacks can be similarly defined.

Let $\WW$ be a projective DM stack with projective coarse moduli space $W$.    We define the moduli stack of stable maps from  log surface DM stacks $(\XX, \DD)$ to $W$.  We still fix $K^2\in \qq$, $I\subset [0,1]$ a finite set, and $0< A, B\in \qq$.   Let 
$$M^{\tw}:=\overline{M}^{\tw}_{K^2, A, B}(\WW): \Sch_{\mathbf{k}}\to \text{Groupoids}$$
be the moduli functor
that sends $T$ to the isomorphism classes $\{(\XX,\DD)/T\stackrel{g}{\rightarrow}\WW\}$ of stable maps from the  log surface DM stacks  $(\XX, \DD)$ to  $\WW$ such that the 
the map $(\sX, \sD)/T\to W$ on the coarse moduli spaces is stable with the fixed invariants $K^2, A, B$ and coefficients set of $\sD$ is $I$.  We have the following result.

\begin{thm}\label{thm_index_one_DM_intro}(\cite{Jiang_Tseng}) 
The moduli functor $M^{\tw}$ is represented by a  projective DM stack $M^{\tw}$ over $\mathbf{k}$. 
Moreover, there exists a proper morphism between DM stacks 
$$f: M^{\tw}\to M$$
which induces a proper map  on the coarse moduli spaces. 
\end{thm}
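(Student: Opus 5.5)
The plan is to follow the pattern of Theorem \ref{thm_index_one_DM_Intro} and Theorem \ref{thm_universal_covering_stack}: first show that $M^{\tw}$ is an algebraic stack with finite, unramified diagonal, and then obtain properness and projectivity by comparing with Alexeev's space $M=\overline{M}_{K^2,A,B}(W)$ of Theorem \ref{thm_moduli_maps_intro}.

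The first step is to show that every object of $M^{\tw}(T)$ is determined by its coarse data up to finitely many choices. A family $(\XX,\DD)/T\to\WW$ has coarse family $(\sX,\sD)/T\to W$, and this is a $\qq$-Gorenstein family of stable maps. Passing to coarse spaces therefore defines a morphism of functors $f: M^{\tw}\to M$. For each coarse family, the stacky structure on the source is either the index one cover, which is canonical, or an lci/fake-lci cover. Over a one-parameter base the latter is constructed as in Theorem \ref{thm_lci_family} and Theorem \ref{thm_one_parameter_family_lci}, and over general $T$ by base change as in Definition \ref{defn_family_lci}. The target of the lift is $\WW$ rather than $W$, so lifting $\sX\to W$ to $\XX\to\WW$ is a twisted-stable-map problem in the sense of Abramovich–Vistoli. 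I would use their result that, for a fixed source orbifold structure, representable lifts to $\WW$ form a stack that is finite over the space of coarse maps. Combining this with the finiteness of the local covering groups $G$ (Theorems \ref{thm_minimally_elliptic_universal2}, \ref{thm_minimally_elliptic_universal}), $f$ should be representable and quasi-finite.

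Algebraicity would then be proved by presenting $M^{\tw}$ locally as a quotient. Using the line bundle $\sL_N$ from Alexeev's definition, which is pulled back to the stacks, embed the coarse families into a fixed projective space. On the Hilbert scheme of such embedded coarse families, add the data of the root and cover structures, for which the Olsson–Starr/Abramovich–Hassett descriptions of cyclic covers give locally closed conditions. Then add the lift to $\WW$, using a Hom-stack argument for DM stacks. Taking the quotient by $PGL$ gives an algebraic stack of finite type. Automorphisms are finite because $K_X+D$ is $g$-ample on the coarse level and the cover groups are finite, so the stack is DM.

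For properness I would use the valuative criterion relative to $f$. Since $M$ is proper, it suffices to show the following: given a DVR $R$ and a family over the punctured spectrum extending on the coarse level, after a finite base change the stacky family and the map to $\WW$ extend. The extension of the map follows from the twisted stable map argument of Abramovich–Vistoli, namely purity for the branch locus in codimension $\ge 2$ and extension of representable maps after root constructions. The extension of the source cover is where Theorem \ref{thm_one_parameter_family_lci} and condition (4) of Definition \ref{defn_family_lci} enter. I expect this to be the main obstacle, because the fake lci covers are unique only up to flops. I would handle separatedness by imposing the $\mathbb{S}$-equivalence, as done for $M_N^{\lci}$ in \cite{Jiang_2022}. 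Once $f$ is proper and quasi-finite it is finite. Projectivity of $M^{\tw}$ then follows because the coarse space of $M^{\tw}$ is finite over the projective coarse space of $M$. The induced map on coarse spaces is proper by construction.
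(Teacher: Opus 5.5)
There is a genuine gap. It starts with your choice of source stacks and shows up in the properness step.

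\textbf{Source stacks.} Fake lci covers do not belong in this moduli problem.
\begin{itemize}
\item They are central fibres of crepant resolutions of the total space of a \emph{particular} one-parameter smoothing. So they depend on the smoothing and are not covers of the slc fibre: their coarse space is a modification of it.
\item The families of Definition~\ref{defn_family_lci} live over a base $T$ with only a proper map $\eta\colon T\to T'$ to the base of the coarse family. Hence passing to coarse spaces does not define $f$ on them.
\item The $\mathbb{S}$-equivalence you impose for separatedness is not part of the functor $M^{\tw}$, so you would be representing a different functor.
\item Quasi-finiteness of $f$ is lost. The paper only asserts that $f^{\lci}\colon M^{\lci}_N\to M_N$ is proper (Theorem~\ref{thm_universal_covering_stack}) and conjectures that it is a contraction.
\end{itemize}
Your chain ``proper and quasi-finite, hence finite, hence projective coarse space'' therefore breaks. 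The untwisted source to compare with is the canonical index one cover (Theorem~\ref{thm_index_one_DM_Moduli_intro}). That is how the theorem is used in the subsection on enumerative invariants, where the only extra datum over a point of $M$ is the lift to the $\mu_N$-gerbe $\WW$. For such $\WW$, $f$ is not representable: the band $\mu_N$ acts by automorphisms on every object lying over the identity of its coarse stable map. So you should aim for finiteness of the induced map on coarse spaces.

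\textbf{Properness.} Even restricting to index one covers, your valuative step does not go through.

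The Abramovich--Vistoli purity lemma extends a lift across a codimension $\ge 2$ locus only where punctured neighbourhoods have trivial local fundamental group. Here the lift $\XX_\eta\to\WW$ must cross the whole central fibre. After the ramified base change needed at the generic points of its components, the total space is locally $(xy=s^k)\times\aaa^1$ along the double curves, and purity does not apply.

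The lift then need not extend. Take $\WW=W\times B\mu_N$, so a lift is a $\mu_N$-torsor. Take $\sX=\mathcal{C}\times E$, where $\mathcal{C}$ is a smoothing of a stable curve with a non-separating node and $E$ is a fixed curve of genus $\ge 2$, mapped to $W$ by any family of embeddings.
\begin{itemize}
\item Pull back from $\mathcal{C}_\eta$ a torsor that is non-trivial on the vanishing cycle.
\item Near a point of $\{\mathrm{node}\}\times E$, every torsor on the total space is trivial.
\item Hence this torsor cannot extend over any family whose central fibre has no stacky structure along that curve.
\item Index one covers and lci covers are untwisted there.
\end{itemize}
This is Abramovich--Vistoli's reason for twisted nodes, multiplied by $E$. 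Limits exist only after giving the central fibre twisted structure $[\{uv=0\}/\mu_r]$ along double curves, and suitable covers at singular points.

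Your ``root constructions'' tacitly do this, but such limits lie outside the source class you fixed. Moreover, the lemma you attribute to Abramovich--Vistoli, that lifts with a \emph{fixed} source structure are finite over the coarse maps, is false, as this example shows. In their theory, properness holds precisely because the twisted structure is allowed to vary.

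To close the gap you must:
\begin{itemize}
\item allow the genuinely twisted sources that the definition of $M^{\tw}$ permits;
\item require representability to bound stabilisers;
\item show that over fixed coarse data there are only finitely many twisted structures and lifts, which is what actually gives quasi-finiteness;
\item prove uniqueness of the twisted limit, to get separatedness.
\end{itemize}
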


\subsection{Perfect obstruction theory}

We study the obstruction theory on the moduli stack $M^{\tw}$.

Let $T=\spec(A)$ be an affine scheme, and  $(\sX,\sD)/A\stackrel{g}{\rightarrow}W$ be a family of  stable maps. 
The flat family $(\XX,\DD)/A\stackrel{g}{\rightarrow}\WW$ is one lifting of stable maps to the gerbe $\WW$. 
Let $\ll^{\bullet}_{\XX/A}$,  $\ll^{\bullet}_{\DD/A}$, and  $\ll^{\bullet}_{\WW}=\Omega_{\WW}$ be the cotangent complexes of $\XX, \DD, \WW$ respectively.
We take the stable map $g$ as the map
$$(\DD\stackrel{\iota}{\hookrightarrow}\XX)\stackrel{g}{\rightarrow} \WW.$$
Let $\overline{A}\to A$ be an extension with kernel $\mathcal{N}$, then the $\qq$-Gorenstein  deformation of the stable map $g$ over $\overline{A}$ is canonically isomorphic to 
the group 
$$\Ext^1_{\DD}(\iota^*g^*\Omega_{\WW}\to \iota^*\ll^{\bullet}_{\XX/A}\to \ll^{\bullet}_{\DD/A}, \sO_{\DD}\otimes_{A}\mathcal{N})$$
and the obstruction lies in 
$$\Ext^2_{\DD}(\iota^*g^*\Omega_{\WW}\to \iota^*\ll^{\bullet}_{\XX/A}\to \ll^{\bullet}_{\DD/A}, \sO_{\DD}\otimes_{A}\mathcal{N}).$$
For simplicity we let $T^i_{g}:=\Ext^i_{\DD}(\iota^*g^*\Omega_{\WW}\to \iota^*\ll^{\bullet}_{\XX/A}\to \ll^{\bullet}_{\DD/A}, \sO_{\DD}\otimes_{A}\mathcal{N})$. 
There may exist higher obstruction spaces $T^i_{g}$ for $i\ge 3$.  Ran, Z.'s deformation theory of maps in \cite{Ran} gives a long exact sequence for the deformations of 
$g$ as follows:
\begin{equation}\label{eqn_long_sequence_map}
\begin{array}{cccccc}
&0& \rightarrow & T_g^0& \rightarrow & \Aut(\XX,\DD)\\
\rightarrow& \Hom_g(g^*\Omega_{\WW},\sO_{(\XX,\DD)})& \rightarrow & T_g^1& \rightarrow & T^1(\XX,\DD)\\
\rightarrow& \Ext_g^1(g^*\Omega_{\WW},\sO_{(\XX,\DD)})& \rightarrow & T_g^2& \rightarrow & T^2(\XX,\DD)\\
\rightarrow& \Ext_g^2(g^*\Omega_{\WW},,\sO_{(\XX,\DD)})& \rightarrow & T_g^3& \rightarrow & T^3(\XX,\DD)\\
\rightarrow& \Ext^3(g^*\Omega_{\WW},,\sO_{(\XX,\DD)})& \rightarrow & T_g^4& \rightarrow & T^4(\XX,\DD)\\
\rightarrow&\cdots\cdots\cdots
\end{array}
\end{equation}
where 
$T^i(\XX, \DD):=\Ext^i_{\DD}(\iota^*\ll^{\bullet}_{\XX/A}\to \ll^{\bullet}_{\DD/A}, \sO_{\DD})$
and $\sO_{(\XX,\DD)}:=[\iota^*\sO_{\XX}\to \sO_{\DD}]$.   We prove the above long exact sequence in the Appendix. 
Let $T^i(\XX):=\Ext^i_{\XX}(\ll^{\bullet}_{\XX/A}, \sO_{\XX})$ and $T^i(\DD):=\Ext^i_{\DD}(\ll^{\bullet}_{\DD/A}, \sO_{\DD})$.
Then we also have a long exact sequence for the deformation of the pair $(\XX,\DD)$:
\begin{equation}\label{eqn_long_sequence_pair}
\begin{array}{cccccc}
&0& \rightarrow & T^0(\XX,\DD)& \rightarrow &T^0(\DD)\oplus T^0(\XX) \\
\rightarrow& \Hom_\iota(\iota^*\Omega_{\XX},\sO_{\DD})& \rightarrow & T^1(\XX,\DD)& \rightarrow & T^1(\DD)\oplus T^1(\XX) \\
\rightarrow& \Ext_\iota^1(\iota^*\Omega_{\XX},\sO_{\DD})& \rightarrow & T^2(\XX,\DD)& \rightarrow & T^2(\DD)\oplus T^2(\XX) \\
\rightarrow& \Ext_\iota^2(\iota^*\Omega_{\XX},\sO_{\DD})& \rightarrow & T^3(\XX,\DD)& \rightarrow & T^3(\DD)\oplus T^3(\XX) \\
\rightarrow&\cdots\cdots\cdots
\end{array}
\end{equation}

In order to get a perfect obstruction theory on the moduli space $M^{\ind}$,  it is necessary to prove that 
$T^j(\XX,\DD)=0$ for $j\ge 3$; and $\Ext_g^i(g^*\Omega_{\WW},\sO_{(\XX,\DD)})=0$ for $i\ge 2$.

\textbf{Case 1:} From \cite[Theorem 4.23, Theorem 4.24]{Kollar-Shepherd-Barron},  the  singularities of an s.l.c. surface $X$, except the normal crossing singularities in codimension one, are all  isolated singularities as follows:   finite group  quotient surface singularities,  simple elliptic singularities,  cusp singularities,  degenerate cusp singularities, $\zz_2, \zz_3, \zz_4, \zz_6$ quotients of  simple elliptic singularities,  and  $\zz_2$ quotients of cusps, and  degenerate cusps. 

From \cite[Proposition 3.10]{Kollar-Shepherd-Barron}, Koll\'ar-Shepherd-Barron  proved that if the  quotient singularities admit  $\qq$-Gorenstein smoothings, then they must be class $T$-singularities.
Therefore the index one covers of such singularities have $A_n$ type singularities which are l.c.i.   
For $\zz_2, \zz_3, \zz_4, \zz_6$ quotients $(X,x)/\zz_r$   $(r=2,3,4,6)$ of the simple elliptic singularities, and  $\zz_2$ quotients $(X,x)/\zz_2$ of cusps, and  degenerate cusps,  the index of such singularities can only be $r=2,3,4,6$ and  the index one covers are given by the germs $(X,x)$.  
Thus  for an s.l.c. surface $X$, the possible singularities of  the index one covering DM stack $\XX$ are l.c.i. singularities,  simple  elliptic singularities,  cusps, and  degenerate cusp singularities.
Also for l.c.i. singularity germs $(X,x)$, the local tangent sheaves $\sT^q(X)=0$ for $q\ge 2$. 

From \cite[Theorem 3.13]{Laufer}, a simple  elliptic singularity,  a cusp, or a degenerate cusp singularity germ $(X,x)$ who has embedded dimension $\leq 4$ is l.c.i.
Therefore, if the log surface pair $(X, D)$ does not have any simple  elliptic singularities,  cusps, and  degenerate cusp singularities, or the cyclic quotients of them, then the index one covering DM 
stack $(\XX, \DD)$ only has l.c.i. singularities.  
We make the following condition. 
\begin{condition}\label{condition_star_pair_intro}
If an s.l.c. surface $X$ has the following surface singularity $(X,x)$:   a simple elliptic singularity, a cusp or a 
degenerate cusp singularity, or the $\zz_2, \zz_3, \zz_4, \zz_6$ quotients of the simple elliptic singularity, and the $\zz_2$ quotient of a cusp or a degenerate cusp singularity, 
then $(X,x)$ has  embedded dimension 
at most $4$.  
\end{condition}

From \cite{Hacking}, a stable Hacking pair $(X, D)$ is a pair such that it is a deformation of $(\pp^2, C)$ where $C$ is a degree $d\ge 4$ hypersurface, and $-K_X$ is ample. 
Hacking stable pair $(X,D)$ satisfies the conditions above.  Thus from \cite[Proposition 4.14]{Jiang_2022}, $T^i(\XX)=0, T^i(\DD)=0$ for $i\ge 3$. 
Since the divisor $\DD$ avoids the generic codimension one singular points and $\XX$ only has l.c.i. singularities, the sheaf $\iota^*\Omega_{\XX}$ is only not locally free on some isolated singular points of $\DD$.  This will force 
$$\Ext_\iota^i(\iota^*\Omega_{\XX},\sO_{\DD})=0$$
for $i\ge 2$.

Therefore, if the log surface pair $(X,D)$ satisfies the condition \ref{condition_star_pair_intro},   from (\ref{eqn_long_sequence_pair}),  we have 
$T^j(\XX,\DD)=0$ for $j\ge 3$.

\textbf{Case 2:}
For the group $\Ext_g^i(g^*\Omega_{\WW},\sO_{(\XX,\DD)})$,  we have to restrict to {\em strongly convex} varieties. 
We call a smooth projective variety $W$ {\em strongly convex} if  whenever there is a stable map $g: (X,D)\to W$ from Hacking stable pairs, we have 
$$H^i(X, g^*T_W)=0$$
for $i=1, 2$.
For strongly convex varieties, we have $\Ext_g^i(g^*\Omega_{\WW},\sO_{(\XX,\DD)})=0$ for $i\ge 2$.  Therefore, from (\ref{eqn_long_sequence_map}), 
$T^i_g=0$ for $i\ge 3$.

Let $T=\spec(A)\in\Sch_{\mathbf{k}}$ be an affine scheme, and $\eta\in M^{\ind}(T)$ representing a complex of sheaves of $\sO_{\WW\times T}$-modules.  We let 
$$\mathcal{T}^{1}M^{\ind}(\eta)(\mathcal{N})=\sE xt^{1}_{T\times \DD/T}(\iota^*\Omega_{\WW}\to \iota^*\ll^{\bullet}_{\XX}\to \ll^{\bullet}_{\DD}, \pi_T^*\mathcal{N})$$
$$\mathcal{T}^{2}M^{\ind}(\eta)(\mathcal{N})=\mbox{ob}_{\eta}\otimes_{\sO_T}\mathcal{N},$$
where $\ob_{\eta}=o((\XX,\DD)/T\to \WW)\in \Ext^{2}_{T\times \DD/T}(\iota^*\Omega_{\WW}\to \iota^*\ll^{\bullet}_{\XX}\to \ll^{\bullet}_{\DD}, \pi_T^*\mathcal{N})$ is the obstruction class
and $\pi_T: T\times \WW\to T$ is the projection. 

We get the following result:

\begin{thm}\label{thm_tangent_obstructin_M-ind}(\cite{Jiang_Tseng})
Let $M^{\ind}$ be the moduli stack of stable maps from index one covering DM stacks to the $\mu_N$-gerbe $\WW$.  Suppose that $W$ is strongly convex, and for the log index one covering DM stack   pairs $(\XX,\DD)$ in $M^{\ind}$, the 
corresponding log surface pairs $(X,D)$ all satisfy the condition 
\ref{condition_star_pair_intro}.  Then the tangent-obstruction complex 
$$\mathcal{T}^{\bullet}M^{\ind}(\eta)(\mathcal{N})=\Big[ \sE xt^{\bullet}_{T\times \DD/T}(\iota^*\Omega_{\WW}\to \iota^*\ll^{\bullet}_{\XX}\to \ll^{\bullet}_{\DD}, \pi_T^*\mathcal{N})\Big]$$
gives a $perfect$ tangent-obstruction theory in the sense of Li-Tian in \cite[Definition 1.3]{LT}.
\end{thm}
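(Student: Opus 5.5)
To prove Theorem~\ref{thm_tangent_obstructin_M-ind}, I will check the axioms of a perfect tangent-obstruction theory in \cite[Definition 1.3]{LT}. There are three of them. First, $\mathcal{T}^1M^{\ind}(\eta)(\mathcal{N})$ must classify first order deformations over square-zero extensions $\overline{A}\to A$ with kernel $\mathcal{N}$. Second, $\ob_\eta$ must be functorial and must vanish exactly when a lift exists. Third, the complex must be locally representable by a two-term complex of locally free sheaves, and its formation must commute with base change. The first two are the standard deformation theory of the map $(\DD\stackrel{\iota}{\hookrightarrow}\XX)\stackrel{g}{\rightarrow}\WW$. I would obtain them from Illusie's cotangent complex formalism applied to the cone
\[
C_g:=\big[\iota^*g^*\Omega_{\WW}\to \iota^*\ll^{\bullet}_{\XX/A}\to \ll^{\bullet}_{\DD/A}\big].
\]
The long exact sequence (\ref{eqn_long_sequence_map}), taken from Ran \cite{Ran} and proved in the Appendix, identifies $\Ext^1$ with deformations and $\Ext^2$ with obstructions. $\qq$-Gorenstein deformations of the coarse pair correspond to all deformations of the index one covering DM stack, by the correspondence in Theorem~\ref{thm_index_one_DM_Moduli_intro} and its log version. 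So it suffices to deform the stack and the map.

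The core of the proof is perfectness, which comes down to a vanishing statement: $T^j_g=0$ for $j\ge 3$, together with $T^0_g=0$. I would carry this out in three steps.

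\textbf{Step 1.} I show $T^0_g=0$. Stability of $g$ (ampleness of $K_X+D+5H$) makes $\Aut(\XX,\DD)$ relative to $g$ finite, so infinitesimal automorphisms vanish.

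\textbf{Step 2.} Case~1 of the discussion above gives $T^j(\XX,\DD)=0$ for $j\ge 3$. It uses Condition~\ref{condition_star_pair_intro} and \cite[Theorem 3.13]{Laufer} to get lci singularities, \cite[Proposition 4.14]{Jiang_2022} for $T^i(\XX),T^i(\DD)$, and the vanishing of $\Ext^i_\iota(\iota^*\Omega_{\XX},\sO_{\DD})$ for $i\ge2$, all fed into (\ref{eqn_long_sequence_pair}).

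\textbf{Step 3.} Strong convexity gives $\Ext^i_g(g^*\Omega_{\WW},\sO_{(\XX,\DD)})=0$ for $i\ge 2$. The $\mu_N$-gerbe $\WW\to W$ is étale, so $g^*\Omega_{\WW}$ agrees with the pullback of $\Omega_W$, and the cohomology on $\XX$ equals that on the coarse space $X$ because taking invariants is exact in characteristic zero. Plugging Steps 2 and 3 into (\ref{eqn_long_sequence_map}) gives $T^j_g=0$ for $j\ge3$.

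To globalize, I would apply relative duality for the flat, projective, relatively Gorenstein family $\DD\to T$, whose dualizing sheaf on the index one cover is invertible. This gives a complex $E^\bullet=Rp_*(C_g\otimes\omega)[\cdot]$ over $T$ with $\sE xt^\bullet$ as its cohomology sheaves. Cohomology and base change then shows that $E^\bullet$ is perfect of amplitude $[-1,0]$: the fibrewise Ext groups vanish outside degrees $1,2$, so $E^\bullet$ is locally a two-term complex of vector bundles whose formation commutes with base change. The Kodaira--Spencer class of the universal family, composed with the map to $\ll^\bullet_{M^{\ind}}$, gives the obstruction theory map. As in the argument for $E^\bullet_{M_N^{\KSBA}}$, it is an isomorphism on $h^0$ and surjective on $h^{-1}$, because those are exactly the deformation and obstruction properties from the first paragraph.

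I expect Step 3 together with the vanishing of $\Ext^i_\iota(\iota^*\Omega_{\XX},\sO_{\DD})$ for $i\ge2$ to be the main obstacle. There $\iota^*\Omega_{\XX}$ fails to be locally free at isolated points of $\DD$, and the local-to-global spectral sequence contributes local $\sE xt^q$ terms supported at points for every $q\ge1$. My first attempt would be to show that near those points $\DD$ is a Cartier divisor in an lci stack, so that $\iota^*\ll_{\XX}$ is perfect of amplitude $[-1,0]$ there and the local $\sE xt^{q}$ vanish for $q\ge2$. The global $\Ext^{\ge 2}$ would then vanish for dimension reasons on the curve $\DD$.
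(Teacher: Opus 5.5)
Your proposal is correct and follows essentially the same route as the paper: perfectness is reduced to $T^j_g=0$ for $j\ge 3$ through the long exact sequences (\ref{eqn_long_sequence_pair}) and (\ref{eqn_long_sequence_map}), with Condition~\ref{condition_star_pair_intro}, \cite[Theorem 3.13]{Laufer} and \cite[Proposition 4.14]{Jiang_2022} giving $T^{j}(\XX,\DD)=0$ for $j\ge 3$, and strong convexity giving $\Ext^{i}_g(g^*\Omega_{\WW},\sO_{(\XX,\DD)})=0$ for $i\ge 2$. Your additional steps only make explicit what the paper states without detail: the vanishing of $T^0_g$ by stability, the relative-duality globalization, and the local-to-global argument for $\Ext^{i}_\iota(\iota^*\Omega_{\XX},\sO_{\DD})=0$ with $i\ge 2$.
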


\subsection{Discussion on the general case}

If we consider the general log surface pair $(X,D)$, and its lci covering DM stack $(\XX^{\lci}, \DD^{\lci})$. Let
$M=\overline{M}_{K^2, \chi, A,B}$ be the KSBA moduli stack.  
Suppose that $D=\sum_{i}a_i D_i$.
Then it is not known at the moment what is the right cotangent complex for this moduli space for the general coefficients $a_i$.  So in this setting it is still not clear how to define a perfect obstruction theory on the moduli stack of lci covers for this moduli space. 

Even in the cases that the log pair $(X,D)$ is a Hacking pair, or $D$ is empty, but $W$ is a general projective variety, then the moduli stack $M^{\tw}:=\overline{M}^{\tw}_{K^2, A, B}(\WW)$
admits an obstruction theory $\sT^{\bullet}(M^{\tw})$ has 
three terms. At the moment there is no construction for a virtual fundamental class for the three-term obstruction complex. 

\subsection{Enumerative invariants}

Let $M^{\ind}$ be the moduli space of stable maps in Theorem \ref{thm_tangent_obstructin_M-ind} such that it admits a perfect tangent-obstruction theory.  Standard construction in \cite{LT}, \cite{BF} gives a virtual fundamental class $[M^{\ind}]^{\vir}\in H_{2\vd}(M^{\ind})$.  Here $\vd=\dim(T_g^1)-\dim(T_g^2)$ is the virtual dimension. 

Theorem \ref{thm_index_one_DM_intro} gives a 
a finite morphism $f: M^{\ind}=\overline{M}^{\ind}_{K^2, A, B}(\WW)\to M=\overline{M}_{K^2, A, B}(W)$ between DM stacks.   We define 
$$[M, \WW]^{\vir}:=f_*([M^{\ind}]^{\vir})\in H_{2\vd}(M)$$
to be the virtual fundamental class of the moduli space $M$ associated with the $\mu_N$-gerbe $\WW\to W$. 
We do not know if the virtual fundamental class $[M, \WW]^{\vir}$ is independent to the choice of the $\mu_N$-gerbe $\WW\to W$ over $W$.  
But for the same line bundle $L\in\Pic(W)$, for a large divisible $N\in\zz_{>0}$ by the orders of the cyclic groups in the index one covering DM stacks $(\XX,\DD)$ in $M^{\ind}$, we believe  that 
$[M, \WW]^{\vir}$ is independent to the choice of $N\gg0$.   

\begin{con}\label{con_virtual_cycle_gerbes}
The virtual fundamental class $[M, \WW]^{\vir}$ constructed above is independent to the $\mu_N$-gerbes over $W$ for large divisible $N\gg 0$.
\end{con} 

Following the suggestion in \cite[\S 7]{Alexeev3}, and \cite[\S 5.3]{DR},  we define the Gromov-Witten surface counting invariants for counting Hacking pairs in $W$ using the virtual fundamental class 
$[M, \WW]^{\vir}$.  Recall in the moduli space $M^{\ind}$, the divisor $D=\sum_{i=1}^{n}a_i D_i$ in the log surface pair $(X,D)$ with fixed coefficients set $I\subset [0,1]$. 
Let $p_{ij}:=D_i\cap D_j$ be the intersection points for $i, j\in\{1, 2,\cdots, n\}$, and define 
$$\ev_{ij}: M\to W$$
by
$$((X,D)\stackrel{g}{\rightarrow}W)\mapsto g(p_{ij}).$$
\begin{defn}\label{defn_GW1}
Let $\alpha_{ij}\in H^*(W,\zz)$ be cohomology classes.  Define
$$\langle\alpha_{ij}\rangle_{K^2, A, B}^{W}:=\int_{[M,\WW]^{\vir}}\prod_{i,j} \ev_{ij}^*(\alpha_{ij}).$$
The invariant is non-zero only if $\sum_{i,j}\deg(\alpha_{ij})=2\vd$.
\end{defn}

\end{document}